\pgfplotsset{compat=newest}
\titlespacing*{\section}{0pt}{14pt}{4pt}
\titlespacing*{\subsection}{0pt}{8pt}{3pt}
\patchcmd{\ttlh@hang}{\parindent\z@}{\parindent\z@\leavevmode}{}{}
\patchcmd{\ttlh@hang}{\noindent}{}{}{}
\def\maketimestamp{\count255=\time
\divide\count255 by 60\relax
\edef\thetime{\the\count255:}%
\multiply\count255 by-60\relax
\advance\count255 by\time
\edef\thetime{\thetime\ifnum\count255<10 0\fi\the\count255}
\edef\thedate{\number\day-\ifcase\month\or Jan\or Feb\or Mar\or
             Apr\or May\or Jun\or Jul\or Aug\or Sep\or Oct\or
             Nov\or Dec\fi-\number\year}
\def\timstamp{\hbox to\hsize{\tt\hfil\thedate\hfil\thetime\hfil}}}
\numberwithin{equation}{section}  
\newtheorem{theorem}{Theorem}[section]
\newtheorem{lemma}[theorem]{Lemma}
\newtheorem{proposition}[theorem]{Proposition}
\newtheorem{corollary}[theorem]{Corollary}
\theoremstyle{definition}
\newtheorem{definition}[theorem]{Definition} 
\newtheorem{example}{Example}
\theoremstyle{remark}
\newtheorem{remark}{Remark}
\DeclareMathOperator{\Span}{span} %
\DeclareMathOperator{\covol}{covol}
\newcommand{\mean}[1]{\ensuremath{M( {#1} )}}
\newcommand{\gsi}[1][g]{\ensuremath{\seq{T_\gamma {#1}_j}_{j \in J,
      \gamma \in \Gamma_j}}}
\newcommandtwoopt{\gaborG}[3][a][b]{\mathcal{G}(#3,#1,#2)} 
\newcommand{\card}[1]{\# \abs{#1}} 
\newcommand{\eps}{\ensuremath{\varepsilon}}
\newcommand*{\numbersys}[1]{\ensuremath{\mathbb{#1}}}
\newcommand*{\C}{\numbersys{C}}
\newcommand*{\R}{\numbersys{R}}
\newcommand*{\Q}{\numbersys{Q}}
\newcommand*{\Z}{\numbersys{Z}}
\newcommand*{\N}{\numbersys{N}}
\newcommand{\itvcc}[2]{\ensuremath{\left[{#1},{#2}\right]}} %
\newcommand{\abs}[1]{\ensuremath{\left\lvert#1\right\rvert}}
\newcommand{\abssmall}[1]{\ensuremath{\lvert#1\rvert}}
\newcommand{\absbig}[1]{\ensuremath{\bigl\lvert#1\bigr\rvert}}
\newcommand{\absBig}[1]{\ensuremath{\Bigl\lvert#1\Bigr\rvert}}
\newcommand{\norm}[2][]{\ensuremath{\left\lVert#2\right\rVert_{#1}}}
\newcommand{\innerprod}[3][]{\ensuremath{\left\langle #2,#3\right\rangle_{\! #1}}}
\newcommand{\innerprodbig}[3][]{\ensuremath{\bigl \langle #2,#3\bigr\rangle_{\!\!#1}}}
\newcommand{\innerprods}[2]{\ensuremath{\langle #1,#2\rangle}}
\newcommand{\seq}[1]{\ensuremath{\left(#1\right)}}
\newcommand{\setprop}[2]{\ensuremath{\left\lbrace{#1} : {#2}\right\rbrace}}
\newcommand\cD{\mathcal{D}_E}
\newcommand\AP{\mathcal{A}}
\newcommand{\lat}[1]{\ensuremath {#1}} 
\newcommand{\LG}{\ensuremath\lat{\Gamma}}
\newcommand{\ghat}{\widehat{G}}
\def\moverlay{\mathpalette\mov@rlay}
\def\mov@rlay#1#2{\leavevmode\vtop{%
   \baselineskip\z@skip \lineskiplimit-\maxdimen
   \ialign{\hfil$\m@th#1##$\hfil\cr#2\crcr}}}
\newcommand{\charfusion}[3][\mathord]{
    #1{\ifx#1\mathop\vphantom{#2}\fi
        \mathpalette\mov@rlay{#2\cr#3}
      }
    \ifx#1\mathop\expandafter\displaylimits\fi}
\newcommand{\bigcupdot}{\charfusion[\mathop]{\bigcup}{\cdot}}
\def\blfootnote{\xdef\@thefnmark{}\@footnotetext}
\def\subjclass{\xdef\@thefnmark{}\@footnotetext}
\long\def\symbolfootnote[#1]#2{\begingroup%
\def\thefootnote{\fnsymbol{footnote}}\footnote[#1]{#2}\endgroup}
  \renewenvironment{abstract}{%
      \titlepage
      \null\vfil
      \@beginparpenalty\@lowpenalty
      \begin{center}%
        \bfseries \abstractname
        \@endparpenalty\@M
      \end{center}}%
     {\par\vfil\null\endtitlepage}
  \renewenvironment{abstract}{%
      \if@twocolumn
        \section*{\abstractname}%
      \else
        \small
        \list{}{%
          \settowidth{\labelwidth}{\textbf{\abstractname:}}
          \setlength{\leftmargin}{50pt}
          \setlength{\rightmargin}{50pt}
          \setlength{\itemindent}{\labelwidth}
          \addtolength{\itemindent}{\labelsep}
        }
        \item[\textbf{\abstractname:}]

      \fi}
      {\if@twocolumn\else\endlist\fi}
\begin{document}

\title{System bandwidth and the existence of generalized shift-invariant frames}

\date{\today}

 \author{Hartmut F\"uhr\footnote{Lehrstuhl A f\"ur Mathematik, RWTH
     Aachen, E-mail:
     \protect\url{fuehr@matha.rwth-aachen.de}}\phantom{$^\ast$} and  Jakob Lemvig\footnote{Technical University of Denmark, Department of Applied Mathematics and Computer Science, Matematiktorvet 303B, 2800 Kgs.\ Lyngby, Denmark, E-mail: \protect\url{jakle@dtu.dk}}} 

 \blfootnote{2010 {\it Mathematics Subject Classification.} Primary
   42C15. Secondary: 42A60}
 \blfootnote{{\it Key words and phrases.} Almost periodic, bandwidth,
   Calder\'on sum, frame, generalized shift-invariant system}

\maketitle

\thispagestyle{plain}

 \begin{abstract} 
 We consider the question whether, given a countable system of lattices
 $(\Gamma_j)_{j \in J}$ in a locally compact abelian group $G$, there
 exists a sequence of functions $(g_j)_{j \in J}$ such that the
 resulting generalized shift-invariant system $(g_j(\cdot - \gamma))_{j \in
   J, \gamma \in \Gamma_j}$ is a tight frame of $L^2(G)$. This paper develops a
 new approach to the study of almost periodic functions for
 generalized shift-invariant systems based on an \emph{unconditionally
 convergence property}, replacing previously used local integrability
 conditions. From this theory, we derive characterizing
 relations for tight and dual frame generators, we introduce the \emph{system
   bandwidth} as a measure of the total bandwidth a generalized shift-invariant system can carry, and we show that the
 so-called Calder\'on sum is uniformly bounded from below for
 generalized shift-invariant frames. We exhibit a condition on the
 lattice system for which the unconditionally
 convergence property is guaranteed to hold. 
Without the unconditionally
 convergence property, we show, counter intuitively, that even orthonormal bases
 can have arbitrary small system bandwidth.  
Our results show that the question of existence of frame generators
for a general lattice system can be rather subtle, depending on analytical properties, such as the system bandwidth, as well as on algebraic properties  of the lattice system.  
 \end{abstract}


\section{Statement of the problem}

\subsection{Some terminology}

Let us start by recalling some facts from harmonic analysis on locally compact abelian (LCA) groups; for a thorough introduction, we refer to \cite{Rudin_Fourier,HeRo}.
Throughout the paper, we let $G$ denote a second countable LCA group. It is endowed with a translation-invariant Radon measure, unique up to normalization, called the Haar measure of $G$, and denoted by $\mu_G$. We let $L^2(G)$ denote the Hilbert space of square-integrable functions with respect to Haar measure, and $C_b(G)$ the space of bounded continuous functions.  We will typically write LCA groups additively, and let $0 \in G$ denote the neutral element. 

A \emph{lattice}, sometimes called a uniform lattice, in $G$ is a discrete subgroup $\Gamma < G$ with the property that the quotient $G/\Gamma$ is  compact.  Since we assume $G$ to be second countable, lattices in $G$ are necessarily countable. 
A \emph{generalized shift-invariant} (GSI) system in $L^2(G)$ is constructed by picking a family of lattices $\Gamma_j \subset G$ and a family of vectors $(g_j)_{j \in J} \subset L^2(G)$, and defining the family 
\[
  (T_\gamma g_j)_{j \in J, \gamma \in \Gamma_j},
\]
where $T_\gamma f = f(\cdot -\gamma)$ denotes the translation operator
on $L^2(G)$. 
This general class of systems of vectors was introduced in
\cite{MR1916862,MR2132766} for $G =\R^n$, and further studied, e.g., in
\cite{JakobsenReproducing2014,MR2283810} for the general setting of
LCA groups.

 GSI systems can be seen as
countable filter banks or adaptive time-frequency representations. 
They are interesting objects in their own right and not only as
a framework to unify Gabor and wavelet analysis. We refer to
\cite{MR2854065} for an implementation and applications of GSI systems in
signal processing, to \cite{MR3627474} for a construction of dual GSI frames
for $L^2(\R)$, and to \cite{1704.07176,1606.08647} for sparseness
properties of GSI frames
for $L^2(\R)$. 

Next, some terminology relating to frames, Bessel systems, and related notions. A family of vectors $(\eta_i)_{i \in I}$ contained in a Hilbert space $\mathcal{H}$ is called a \emph{Bessel system} if there exists a constant $B$ such that, for all $f \in \mathcal{H}$
\[
 \sum_{i \in I} \left| \langle f, \eta_i \rangle \right|^2 \le B \| f \|^2. 
\] 
The constant $B$ is called a \emph{Bessel bound} of the system.
If, in addition, there exists a lower bound $A>0$ such that, for all $f \in \mathcal{H}$,
\begin{equation}
 A \| f \|^2 \le \sum_{i \in I} \left| \langle f, \eta_i \rangle
 \right|^2 ,\label{eq:abstract-frame-lower}
\end{equation}
the system is called a \emph{frame}. 
The constants $A$ and $B$ are called \emph{frame bounds}; the optimal
frame bounds, denoted $A^\dagger$ and $B^\dagger$, respectively, are the largest possible value for $A$ and the smallest
possible value for $B$ in the above inequalities. If $A=B=1$,
the frame is said to be tight. If $A=B=1$, the frame is called a \emph{Parseval frame}. 
As a particular case of frames, we mention orthonormal bases, which
can be characterized as Parseval frames with normalized
elements. 

For a Bessel system $(\eta_i)_{i \in I}$ the frame operator
 on $\mathcal{H}$ is given as $S_\eta=\sum_i
 \innerprod{\cdot}{\eta_i}\eta_i$; this operator is bounded and,
 furthermore, invertible if the lower bound
 \eqref{eq:abstract-frame-lower} holds. Two Bessel systems
 $(\eta_i)_{i \in I}$ and $(\kappa_i)_{i \in I}$ are called \emph{dual
   frames} if 
 \begin{equation}
 f = \sum_{i \in I} \innerprod{f}{\eta_i}\kappa_i \quad \text{for all
 } f \in \mathcal{H};\label{eq:dual-frames-def}
\end{equation}
in this case the two Bessel systems are automatically frames. 
Finally,
a frame $(\eta_i)_{i \in I}$ has at least one dual frame
$(\kappa_i)_{i \in I}$ so that \eqref{eq:dual-frames-def} holds; the canonical choice is
$\kappa_i=S_\eta^{-1}\eta_i$ for all $i \in I$. 

A \emph{generalized shift-invariant frame} is a generalized
shift-invariant system that is a frame at the same time. From the
general frame theory outlined above given a GSI frame $(T_\gamma
g_j)_{j,\gamma}$, there exists a dual frame
$(\widetilde{g}_{j,\gamma})_{j,\gamma} \subset L^2(G)$, i.e,  for all $f \in L^2(G)$, we have
\[
f = \sum_{j\in J}\sum_{\gamma \in \Gamma_j} \langle f, T_\gamma g_j \rangle
  \widetilde{g}_{j,\gamma} = \sum_{j\in J}\sum_{\gamma \in \Gamma_j} \langle f, \widetilde{g}_{j,\gamma} \rangle T_\gamma g_j~,
\]
with unconditional convergence. Parseval frames are
characterized by the property that one may take
$\widetilde{g}_{j,\gamma} = T_\gamma g_j$. However, for a non-tight
GSI frame there might not exist any dual frames with GSI
structure. Recall that a Riesz basis is
  a non-redundant frame and that Riesz bases only have one dual frame,
  namely, the canonical dual.
\begin{proposition}
\label{prp:GSI-frames-no-GSI-dual}
    The dual basis of a GSI Riesz basis need not have GSI structure. 
\end{proposition}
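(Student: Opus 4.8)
The plan is to exhibit a concrete GSI Riesz basis whose (unique) dual frame is not a GSI system. The structural reason such examples must exist is the following: the frame operator $S=\sum_{j}\sum_{\gamma\in\Gamma_j}\langle\,\cdot\,,T_\gamma g_j\rangle\,T_\gamma g_j$ commutes with $T_\delta$ for every $\delta\in\bigcap_{j}\Gamma_j$, so if only a single lattice $\Gamma$ is used then $S^{-1}T_\gamma g_j=T_\gamma\bigl(S^{-1}g_j\bigr)$ and the canonical dual is automatically GSI, with generators $\widetilde g_j=S^{-1}g_j$. Hence to obtain a counterexample one is forced to use at least two lattices whose intersection is trivial, so that $S^{-1}$ has no reason to intertwine the translation action of each family separately. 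This is exactly the degree of freedom I would exploit.

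I would realize this on a finite group, which is a perfectly legitimate second countable LCA group; there \emph{Riesz basis} simply means \emph{basis}, and by the remark recalled just above the statement the only dual frame is the biorthogonal basis. Take $G=(\Z/2\Z)^2$, write its elements as $00,10,01,11$, and use the two order-two subgroups $\Gamma_1=\{00,10\}$ and $\Gamma_2=\{00,01\}$, so that $\Gamma_1\cap\Gamma_2=\{00\}$. With the point masses $\delta_x$ as reference basis of $L^2(G)\cong\C^4$, set $g_1=\delta_{00}$ and $g_2=\delta_{10}+\delta_{01}$. The proof then proceeds in three short steps. First, list the four system vectors $g_1,\,T_{10}g_1,\,g_2,\,T_{01}g_2$ and check that they are linearly independent, so that $\{T_\gamma g_j\}_{j,\gamma}$ is a GSI Riesz basis of $\C^4$. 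Second, compute the biorthogonal dual $u_1,u_2,u_3,u_4$ by inverting the $4\times4$ synthesis matrix. Third, observe that a GSI dual would force the dual of each family to be a single translation orbit, namely $u_2=T_{10}u_1$ together with $u_4=T_{01}u_3$; I would then verify that the first identity holds but the second fails, so the unique dual is not GSI.

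The one delicate point, and the thing one must calibrate, is the choice of generators: for most choices the biorthogonal dual happens to remain GSI even though the lattices intersect trivially, so the cross-interaction between the two families has to be tuned just strongly enough to push the dual of one family off its translation orbit while preserving linear independence. Once $g_1,g_2$ are fixed as above this reduces to a bounded computation, so I expect the only real obstacle to be this calibration. An alternative route over $G=\R$ with incommensurable lattices $\Gamma_1,\Gamma_2$, using density of $\Gamma_1+\Gamma_2$ and continuity of the cross-correlation $s\mapsto\langle g_1,T_s\widetilde g_2\rangle$ to exclude a GSI dual, is conceptually similar; but it is encumbered by the fact that finitely many principal shift-invariant pieces cannot exhaust $L^2(\R)$, which makes the finite model the cleaner vehicle for the statement.
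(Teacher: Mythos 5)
Your structural observation---that a single-lattice GSI system always has a GSI canonical dual because $S$ commutes with the lattice translations, so a counterexample must mix at least two lattices---is correct and is indeed the right starting intuition. But the concrete realization fails for two reasons, the first of which is fatal to any finite-group (or compact-group) model. With the paper's definition, a lattice is a discrete subgroup with compact quotient; in a finite group $G$ \emph{every} subgroup qualifies, including the trivial subgroup $\{0\}$. Hence every finite collection of vectors in $L^2(G)$ is trivially a GSI system (each vector its own orbit over $\Gamma_j=\{0\}$), and the statement "the dual is not GSI" can never be witnessed there. The paper's argument lives on $G=\R$ precisely because every lattice in a non-compact group is infinite, so membership in a GSI system forces each element to be accompanied by infinitely many nontrivial translates of itself---that is the lever used to exclude GSI structure.

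The second gap is in your step three: to show a family is \emph{not} GSI you must exclude \emph{every} partition of it into lattice orbits, not just the one mirroring the original system. Your verification only checks whether $u_2=T_{10}u_1$ and $u_4=T_{01}u_3$. In fact, for your choice $g_1=\delta_{00}$, $g_2=\delta_{10}+\delta_{01}$ the biorthogonal dual works out to $u_1=\delta_{00}-\delta_{11}$, $u_2=\delta_{10}-\delta_{01}$, $u_3=\delta_{01}$, $u_4=\delta_{11}$, and while $u_4\neq T_{01}u_3$ as you predict, one has $u_4=T_{10}u_3$; so the dual is exactly the union of the two $\{00,10\}$-orbits of $\delta_{00}-\delta_{11}$ and of $\delta_{01}$, i.e.\ it \emph{is} a GSI system (even with nontrivial lattices). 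The paper avoids both pitfalls by taking the dyadic wavelet Riesz basis generated by $\eta=\psi+\eps D_2\psi$ in $L^2(\R)$, computing the canonical dual explicitly, and then using a translation-invariant qualitative dichotomy: the dual contains functions that are not compactly supported and have their unique maximum at $0$, so any GSI system containing them would have to contain non-compactly-supported functions with maximum away from $0$, which the dual does not possess. Some argument of this "rule out all reindexings at once" type is what your proof is missing.
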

\begin{proof}
  We consider dyadic wavelet systems $\seq{\eta_{j,k}}_{j,k\in \Z}$
  in  $L^2(\R)$, where $\eta_{j,k}=D_{2^j} T_k \eta$ and $D_a
  f=a^{1/2}f(a\cdot)$ for $a>0$ and $f \in L^2(\R)$. We will let $\psi$ be a
  generator of an orthonormal wavelet basis $\seq{\psi_{j,k}}_{j,k\in
    \Z}$. Furthermore, we assume that $\psi$ is a continuous, compactly supported
  function with a unique maximum at $x=0$. Daubechies~\cite[p.~989]{MR1066587} and Chui and Shi~\cite[Section
  3]{MR1199539} prove that the canonical dual of the dyadic wavelet Riesz
  basis generated by $\eta=\psi+\epsilon D_2 \psi$ for
  $0<\epsilon <1$, where $\psi$ is any orthonormal wavelet, does not have
  wavelet structure. Even more is true: it does not have GSI
  structure.   In fact,
  the canonical dual basis $\seq{S^{-1}_\eta \eta_{j,k}}_{j,k\in \Z}$
    of $\seq{\eta_{j,k}}_{j,k\in \Z}$ can be computed explicitly as
 \begin{equation}
    \label{eq:app-riesz-basis-dual}
   S_\eta^{-1} \eta_{j,k} 
= \begin{cases}
     \psi_{j,k} - \eps \psi_{j-1,k/2} + \dots + (-\eps)^n
     \psi_{j-n,k/2^n} & j \in \Z, k \in \Z \setminus \{0\}, \\
       \sum_{m=0}^\infty (-\eps)^m \psi_{j-m,0} & j \in \Z, k=0,
    \end{cases}
  \end{equation}
where $n= \sup \{ m \in\N_0: 2^m \vert k\}$.
  Now, it can be seen by using the properties of
  $\seq{\psi_{j,k}}_{j,k \in \Z}$
  that there exist dual basis vectors that are not translates of any other
  dual basis vector. Indeed, $S_\eta^{-1} \eta_{j,k}$, $k \neq 0$, is
  compactly supported. On the other hand, $\sum_{m=0}^\infty (-\eps)^m
  \psi_{j-m,0}$, $j \in \Z$, has a unique maximum at $x=0$, but these functions are
  not compactly supported.  Every GSI system containing these functions 
  must contain elements that are not compactly supported, but have a unique maximum
  at a point different from zero, whereas the dual basis contains no such function. 
\end{proof}

Due to Proposition~\ref{prp:GSI-frames-no-GSI-dual}, we introduce the notion of a \emph{dual
  generalized shift-invariant system} consisting of a system
$\mathcal{G}$ of lattices, and two families $(g_j)_{j \in J}, (h_j)_{j
  \in J} \subset L^2(G)$ such that the generalized shift-invariant
systems fulfill the following: $(T_\gamma g_j)$ and $(T_\gamma h_j)$ are Bessel sequences, and
\[
 f = \sum_{j,\gamma} \langle f, T_\gamma g_j \rangle T_\gamma h_j
\] holds for all $f \in L^2(G)$.

\subsection{Aims of this paper}

The starting point of this paper is a system of lattices $\mathcal{G} = (\Gamma_j)_{j \in J}$ in $G$. 
We want to find sufficient and/or necessary criteria on $\mathcal{G}$
for the existence of an associated system of vectors $(g_j)_{j \in J}$
 such that the generalized shift-invariant system arising from these
 data is a (tight) frame. We then call the system $(g_j)_{j \in J}$
 \emph{(tight) frame generators} for $\mathcal{G}$. In the case of
 existence of 
 dual frames, we call the systems $(g_j)_{j \in J}$
and $(h_j)_{j \in J}$ \emph{dual frame generators}. The associated lower and upper frame bounds shall be denoted by $A_g,B_g$, etc. 

Stated in such general terms, the problem of deciding the existence of frame generators seems somewhat impenetrable at first. We will not be able to fully solve the existence problem for generating systems, but we will derive results and construct examples showing that this question is remarkably subtle, involving both analytic and algebraic aspects.

An analytic condition that we shall investigate has to do with
bandwidth. To motivate this notion, it is useful to recall the Shannon
Sampling Theorem. Pick an interval $I = [\xi,\xi+L] \in \R$, and
define the closed subspace $PW_I = \{ f \in L^2(\R) : \hat{f} \cdot
\mathbf{1}_I  = \hat{f} \} \subset L^2(\R)$, where $\mathbf{1}_I$
denotes the characteristic function on $I$. Then, letting $g =
L^{-1/2} ({\mathbf{1}}_I)^\vee$, where $(\cdot)^\vee$ denotes the inverse
Fourier transform, we find that the system $(T_{k/L}g)_{k \in \Z}$ is an orthonormal basis of $PW_I$. The length $L$ is commonly called the \emph{bandwidth} of the space $PW_I$. 
We now revert this view. Starting from a lattice $\Gamma = c \Z$, we
pick an interval $I$ of length $L = 1/c$, and a generator  $g \in
PW_I$ such that $(T_{ck} g)_{k \in \Z}$ is an orthonormal basis of the
Paley-Wiener space $PW_I$. Now, given a \emph{system} of lattices
$(c_j \Z)_{j \in J}$, one possible strategy for the construction of
compatible tight frame (in fact, orthonormal basis) generators
$(g_j)_{j \in J}$ would be to cover the real line (the frequency
domain) disjointly by intervals of length $L_j = 1/c_j$, and pick orthonormal basis generators for each $PW_{I_j}$. The question remains, however, whether the combined intervals suffice to cover the full real axis, i.e., whether the \emph{system bandwidth}, defined by $\sum_{j \in J} L_j$, is infinite. These considerations motivate the following definition.
\begin{definition}
 Let $\mathcal{G} = (\Gamma_j)_{j \in J}$ be a system of lattices in
 $G$. Then the quantity, where $\covol(\Gamma_j)$ is the Haar measure of a fundamental domain
of $\Gamma_j$ in $G$,
 \[
  BW(\mathcal{G}) = \sum_{j \in J} \frac{1}{\covol(\Gamma_j)} \in (0,\infty]
 \] is called the \emph{bandwidth} of $\mathcal{G}$,
\end{definition}

Now the above discussion suggests to study the relationship of the
existence of tight frame generators to the condition that $BW(\mathcal{G}) \ge
\mu_{\ghat} (\ghat)$. We shall exhibit situations in which
the bandwidth criterion is quite sharp, and other somewhat
pathological cases, in which bandwidth is an irrelevant quantity. Hence a general characterization of lattice system admitting dual frame generators will have to involve both analytical, quantitative criteria (such as bandwidth), as well as algebraic ones. 

The main results of this paper can be summarized as follows. We first introduce the approach to the analysis of GSI systems via almost periodic functions, established for wavelet systems by Laugesen in
works~\cite{MR1866351,MR1940326}, and then further generalized to the GSI
setting in~\cite{MR1916862}. We add a new result to this general
approach that allows to derive characterizing relations for dual
frame generators under suitable, rather mild, unconditional convergence conditions
(Theorem~\ref{thm:meta_convergence_AP}), and show, in Example~\ref{ex:UCP_LIC}, that it properly
generalizes the known characterizing results for GSI frames~\cite{JakobsenReproducing2014,MR2283810,MR1916862}. Our mild 
convergence conditions replace previously used local integrability
conditions. Under this convergence property we prove that the so-called
Calder\'o{}n sum for GSI frames is bounded from below by the lower frame bound
(Theorem~\ref{thm:a-lic-calderon-bounds}) which provides a necessary
condition on $(g_j)$ for the frame property, but which is also of
independent interest. It is also under the unconditional convergence property, we
prove that $BW(\mathcal{G}) \ge
\mu_{\ghat} (\ghat)$  is necessary for the existence of tight frame
generators (Theorem~\ref{thm:density-GTI}). We then present a general existence result for
frames assuming the existence of a suitable dual covering
(Theorem~\ref{thm:shannon}). 

In absence of the unconditional convergence
property we construct tight GSI frames with
arbitrarily small bandwidth (Theorem~\ref{thm:ONB_fbw}). Using the
notion of independent lattices, we then exhibit a rather general class
of lattice families for which the unconditional convergence property
has an easy characterization and for which the characterizing relations from Theorem~\ref{thm:meta_convergence_AP} are rather restrictive (Theorem~\ref{thm:rat_ind}). Further illustrations of various interesting features of the problem studied in this paper can be found in Examples~\ref{exa:reindex},~\ref{exa:robust} and~\ref{exa:increase}. 

\begin{remark}
 \label{rem:countable_families}
 In our considerations, taking some $g_j$ to be the zero function is
 expressly allowed, unless we want to construct orthonormal
 bases. Hence, whenever the existence of Bessel, frame, or dual frame generators is shown for a subfamily of a family $\mathcal{G}$ of lattices, it holds for $\mathcal{G}$ itself.  Thus one should be aware that the following sufficient conditions only need to be fulfilled by a suitable subfamily of the original lattice family. 
The LCA group $G$ being second countable implies that $L^2(G)$ is
separable, and thus all (discrete) frames in this space are countable. Hence, we will concentrate on countable lattice families $\Gamma$. 
\end{remark}


\section{Notation for LCA groups}
\label{sec:notation-lca-groups}

As stated above, $G$ will always denote a second countable, locally
compact abelian group, its Haar measure will be denoted by $\mu_G$. 

A \emph{fundamental domain}, also known as a Borel section, associated to a lattice $\Gamma \subset G$ is a Borel set $K \subset G$ such that the $\Gamma$-translates tile $G$ up to sets of measure zero; such sets always exist. A more rigorous formulation of this is as follows: Let $\mathbf{1}_K$ denote the indicator function of $K$. Then $K$ is a fundamental domain for $\Gamma$ if
\[
  \sum_{\gamma  \in \Gamma} \mathbf{1}_{K}(x+\gamma) = 1 \quad (\text{a.e. } x \in G)~.
\] 
It is an easy exercise, using translation invariance of  Haar measure,
to prove that for any two fundamental domains $K,K'$ of the same
lattice $\Gamma$, one has $\mu_G(K) = \mu_G(K')$. The \emph{covolume}
$\covol{(\Gamma)}$ of $\Gamma$ in $G$ is then defined as
$\mu_G(K)$. Fundamental domains can always be chosen to be pre-compact. 

For $G = \mathbb{R}^n$, all lattices are given by $\Gamma = C \Z^n$, where $C$ can be any invertible matrix.  
Since the cube $[0,1)^n$ is a fundamental domain for $\Z^n$, it is immediate that $C [0,1)^n$ is a fundamental domain for $\Gamma = C \Z^n$, and one obtains $\covol(\Gamma) = |\det(C)|$. 

We let $\ghat$ denote the character group of $G$, i.e., the
group of all continuous homomorphisms $G \to \mathbb{T}$. The duality
between $G$ and $\ghat$ is denoted by $\langle \cdot, \cdot
\rangle : G \times \ghat \to \mathbb{T}$; confusion of this notation with inner products in $L^2$ will be cleared up by the context.  
The Fourier transform of a function $f \in L^1(G)$ is then given by $\hat{f} : \ghat \to \mathbb{C}$,
\[
 \hat{f}(\omega) = \int_{G} f(x) \overline{\langle x, \omega \rangle} dx~.
\] This defines a bounded operator $\mathcal{F} : L^1(G) \to C_b(G)$,
$f \mapsto \hat{f}$. The Plancherel theorem states that, after proper
normalization of the Haar measure on $\ghat$, the operator
$\mathcal{F}|_{L^1(G) \cap L^2(G)}$ extends uniquely to a unitary
operator from $L^2(G)$ onto $L^2(\ghat)$ which we will also denote by $\mathcal{F}f=\hat{f}$. 

Given a lattice $\Gamma $, its dual lattice (or annihilator) is given by 
\[
\Gamma^\bot = \{ \alpha \in \ghat: \langle \alpha, \gamma \rangle = 1
\quad \forall \gamma \in \Gamma \}~.
\] 
By duality theory, $\Gamma^\bot < \ghat$ is a lattice as
well. In fact, if one normalizes the Haar measure on $\ghat$ in
such a way that the Plancherel theorem holds, then the covolumes of $\Gamma$ and $\Gamma^\bot$ are related by
\[
 \covol(\Gamma) \covol(\Gamma^\bot) = 1~. 
\]
In the case $G =\R^n$ and $\Gamma = C \Z^n$ for some invertible matrix $C$, the dual lattice is computed as $\Gamma^\bot = C^{-T} \Z^n$, with $C^{-T}$ denoting the inverse transpose of $C$. 

To summarize, we let a  Haar measure on $G$ be given. We assume dual
measures so that Plancherel theorem holds, and we assume the
counting measure on discrete subgroups $\Gamma$ and choose the Haar
measure on $G / \Gamma$ as the quotient measure so that Weil's
integral formula holds. Using this quotient measure $\mu_{G/ \Gamma}$
on $G / \Gamma$, we can express the covolume as $\covol(\Gamma) = \mu_{G / \Gamma} (G / \Gamma)$. The quantity
$1/\covol (\Gamma)$ is sometimes called the density of the
subgroup, while $\covol(\Gamma)$ is called the lattice size.

\section{Almost periodic functions and GSI systems in $L^2(G)$}
\label{sec:almost-peri-funct}

\subsection{Fourier analysis of GSI systems}
\label{sec:fourier-analysis-gsi}

In order to understand the role of almost periodic functions, let us
fix a dual GSI system given by the lattice system $(\Gamma_j)_{j \in
  J}$ and the associated functions $(g_j)_{j \in J}$ and $(h_j)_{j \in
  J}$. We fix a closed set $E \subset \widehat{G}$ of measure zero, and define 
\begin{equation}
  \label{eq:def-D}
  \cD = \big\{ f \in L^2 (G) \, : \, \hat{f} \in L^\infty(\ghat) \text{ and }
    \exists K \subset \ghat\setminus E \text{ compact with } \hat{f} \mathbf{1}_K = \hat{f} \text{ a.e.}  \big\}.
\end{equation}
This is a translation-invariant and dense subspace of $L^2(G)$, and since the frame operator is
bounded precisely when the associated system is a Bessel system, the
Bessel property and
further frame theoretical
properties of the system 
only need to be checked on $\cD$. Here $E \subset \hat{G}$ denotes the \emph{blind spot of the system} \cite{JakobsenReproducing2014};  the specific choice of $E$ depends on
the application.

For $f \in \cD$, we define the functions 
$w_{f;g,h,j}:G\to \C$ for $j \in J$ by
\begin{equation}
 w_{f;g,h,j}(x) =  \sum_{\gamma \in \Gamma_j} \langle T_x f,
 T_{\gamma} g_j \rangle \langle T_{\gamma} h_j , T_x f \rangle . \label{eq:wfj-def}
\end{equation}
For each $j \in J$, the series in \eqref{eq:wfj-def} converge pointwise to a continuous
limit function as is seen by the following result. The result is a
dual version of  \cite[Lemma 3.4]{MR2283810} which is an adaptation of \cite[Lemma 2.2]{MR1916862}. 
\begin{lemma} \label{lem:wfj-convergence}
Fix $f \in \cD$ and $j \in J$. Let $\Gamma_j$ be a lattice in $G$ and $g_j,h_j \in L^2(G)$. Then $w_{f;g,h,j}$ is a trigonometric polynomial. More precisely,
  \[
   w_{f;g,h,j} (x) = \sum_{\alpha \in \Gamma_j^\bot} d_{j,\alpha} \langle \alpha, x \rangle ~,
  \]
  where 
  \[
   d_{j,\alpha} = \frac{1}{\covol(\Gamma_j)}\int_{\ghat} \hat{f}(\omega) \overline{\hat{g}_j(\omega) \hat{f}(\omega+\alpha)} \hat{h}_j(\omega+\alpha) d\omega~.
  \]
  In particular, $d_{j,\alpha} = 0$  for all but finitely many $\alpha \in \Gamma_j^\bot$. 
 \end{lemma}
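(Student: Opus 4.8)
The plan is to compute $w_{f;g,h,j}(x)$ by recognizing each summand as a product of Fourier coefficients of a periodic function on the compact quotient $\ghat/\Gamma_j^\bot$, and then to invoke Parseval's identity on that quotient together with Weil's integration formula. Throughout, the hypothesis $f\in\cD$ is what makes every manipulation legitimate: $\hat f$ is bounded with compact support $K\subset\ghat$, so all the products below lie in $L^1(\ghat)\cap L^2(\ghat)$ and the ensuing series converge absolutely.

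First I would apply the Plancherel theorem together with the modulation identity $\widehat{T_\gamma g_j}(\omega)=\overline{\langle\gamma,\omega\rangle}\,\hat g_j(\omega)$ to rewrite the two inner products as Fourier-type integrals over $\ghat$, namely
\[
\langle T_x f,T_\gamma g_j\rangle=\int_{\ghat}\Phi_x(\omega)\,\langle\gamma,\omega\rangle\,d\omega,\qquad
\langle T_x f,T_\gamma h_j\rangle=\int_{\ghat}\Psi_x(\omega)\,\langle\gamma,\omega\rangle\,d\omega,
\]
where $\Phi_x:=\overline{\langle x,\cdot\rangle}\,\hat f\,\overline{\hat g_j}$ and $\Psi_x:=\overline{\langle x,\cdot\rangle}\,\hat f\,\overline{\hat h_j}$. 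Since the character $\langle\gamma,\cdot\rangle$ with $\gamma\in\Gamma_j$ is $\Gamma_j^\bot$-periodic, these numbers are precisely the Fourier coefficients, indexed by $\Gamma_j$, of the $\Gamma_j^\bot$-periodizations $\tilde\Phi_x:=\sum_{\alpha\in\Gamma_j^\bot}\Phi_x(\cdot+\alpha)$ and $\tilde\Psi_x:=\sum_{\alpha\in\Gamma_j^\bot}\Psi_x(\cdot+\alpha)$ on the compact group $\ghat/\Gamma_j^\bot$, whose dual group is $\Gamma_j$. A short estimate, using that for a.e.\ $\omega$ only boundedly many translates $\omega+\alpha$ meet the compact set $K$, shows $\tilde\Phi_x,\tilde\Psi_x\in L^2(\ghat/\Gamma_j^\bot)$, so both coefficient sequences lie in $\ell^2(\Gamma_j)$ and the defining series for $w_{f;g,h,j}(x)$ converges absolutely.

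Next I would apply Parseval's identity on $\ghat/\Gamma_j^\bot$, whose total measure is $\mu_{\ghat/\Gamma_j^\bot}(\ghat/\Gamma_j^\bot)=\covol(\Gamma_j^\bot)=\covol(\Gamma_j)^{-1}$, to obtain
\[
w_{f;g,h,j}(x)=\sum_{\gamma\in\Gamma_j}\langle T_x f,T_\gamma g_j\rangle\,\overline{\langle T_x f,T_\gamma h_j\rangle}
=\frac{1}{\covol(\Gamma_j)}\int_{\ghat/\Gamma_j^\bot}\tilde\Phi_x\,\overline{\tilde\Psi_x}\,d\mu_{\ghat/\Gamma_j^\bot}.
\]
Unfolding one periodization by Weil's formula turns the right-hand side into $\covol(\Gamma_j)^{-1}\sum_{\alpha\in\Gamma_j^\bot}\int_{\ghat}\Phi_x(\omega)\overline{\Psi_x(\omega+\alpha)}\,d\omega$. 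The decisive simplification is that the phase factors collapse, $\overline{\langle x,\omega\rangle}\,\langle x,\omega+\alpha\rangle=\langle x,\alpha\rangle$, so the character $\langle x,\alpha\rangle$ (written $\langle\alpha,x\rangle$ in the statement) pulls out of the $\omega$-integral; what remains is exactly $\frac{1}{\covol(\Gamma_j)}\int_{\ghat}\hat f(\omega)\overline{\hat g_j(\omega)\hat f(\omega+\alpha)}\hat h_j(\omega+\alpha)\,d\omega=d_{j,\alpha}$, giving the asserted expansion.

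Finally, for the finiteness statement I would note that $d_{j,\alpha}$ can be nonzero only if $\hat f(\omega)\overline{\hat f(\omega+\alpha)}\neq0$ on a set of positive measure, which forces $\alpha$ into the compact difference set $\supp\hat f-\supp\hat f$; since the lattice $\Gamma_j^\bot$ meets any compact set in finitely many points, all but finitely many $d_{j,\alpha}$ vanish and $w_{f;g,h,j}$ is a genuine trigonometric polynomial. The main obstacle I anticipate is bookkeeping rather than conceptual: verifying the $L^2$-membership of the periodizations and, above all, pinning down the normalization constant $\covol(\Gamma_j)^{-1}$, which requires carefully tracking the interplay of the Plancherel normalization on $\ghat$, the counting measure on $\Gamma_j$, and the quotient measure on $\ghat/\Gamma_j^\bot$ entering both Parseval's identity and Weil's formula. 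Everything else is an absolutely convergent, hence freely rearrangeable, computation.
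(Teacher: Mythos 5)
Your proof is correct, and it is essentially the standard argument: the paper itself gives no proof of this lemma but refers to \cite[Lemma 3.4]{MR2283810} and \cite[Lemma 2.2]{MR1916862}, whose proofs proceed exactly as you do, by recognizing the inner products as Fourier coefficients of the $\Gamma_j^\bot$-periodizations on $\ghat/\Gamma_j^\bot$ and applying Parseval plus Weil's formula. Your normalization bookkeeping also checks out: with counting measure on $\Gamma_j$ and quotient measure of total mass $\covol(\Gamma_j^\bot)=\covol(\Gamma_j)^{-1}$ on $\ghat/\Gamma_j^\bot$, Parseval indeed produces the prefactor $1/\covol(\Gamma_j)$ in $d_{j,\alpha}$.
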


We define the function $w_{f;g,h}$ on $G$ as 
\begin{equation} \label{eqn:defw2}
 w_{f;g,h}(x) = \sum_{j \in J} w_{f;g,h,j}(x) = \sum_{j \in J} \sum_{\gamma \in \Gamma_j} \langle T_x f, T_{\gamma} g_j \rangle \langle T_{\gamma} h_j , T_x f \rangle 
\end{equation}
provided that the series converge.
In the case $h_j = g_j$, for all $j \in J$, we write $w_{f;g} =
w_{f;g,h}$ 
and $w_{f;g,j} = w_{f;g,h,j}$. Without any
further assumption on the lattice system $\mathcal{G}$ and the
generators, we can only say that the series $\sum_{j \in
  J} w_{f;g,j}(x)$ converges in $\itvcc{0}{\infty}$, hence $ w_{f;g}:G
\to \itvcc{0}{\infty}$ is well-defined, while $w_{f;g,h}:G\to \C$
might not be.

However, under the Bessel property,
the partial sums $\sum_{j \in
  J} w_{f;g,h,j}$ converge pointwise absolutely and uniformly on
compact sets, and
the limit functions are continuous and bounded. 
To prove these properties, we need the following well-known result.
\begin{lemma} \label{lem:calderon_upper}
 If $(\Gamma_j)$, $(g_j)_{j \in J}$ generates a Bessel system in $L^2(G)$, then
 \[
  \sum_{j \in J} \frac{1}{\covol(\Gamma_j)} |\hat{g}_j(\omega)|^2 \le
  B_g  \quad (\text{a.e. } \omega).
 \]
\end{lemma}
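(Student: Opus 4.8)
The plan is to establish the bound first for arbitrary finite subfamilies $J_0 \subset J$ and then pass to the full index set by monotone convergence, which is legitimate since each summand $\frac{1}{\covol(\Gamma_j)}|\hat g_j(\omega)|^2$ is nonnegative. Thus it suffices to prove, for every finite $J_0 \subset J$, that $\sum_{j \in J_0}\frac{1}{\covol(\Gamma_j)}|\hat g_j(\omega)|^2 \le B_g$ for a.e.\ $\omega$; the restriction to finite $J_0$ is exactly what makes the frequency-localization argument below work. The core tool is the classical single-generator shift-invariant identity, which I would record in the form
\[
 \sum_{\gamma \in \Gamma_j} |\langle f, T_\gamma g_j \rangle|^2 = \frac{1}{\covol(\Gamma_j)}\int_{\ghat} |\hat f(\omega)|^2 |\hat g_j(\omega)|^2 \, d\omega,
\]
valid whenever $\hat f$ is supported on a set that injects into $\ghat / \Gamma_j^\bot$.

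The first step is to produce such a frequency window that works simultaneously for all $j \in J_0$. Since $\bigcup_{j \in J_0} \Gamma_j^\bot$ is a finite union of lattices in $\ghat$, it is discrete, so $0$ is an isolated point; choose an open neighborhood $W$ of $0$ with $W \cap \bigcup_{j \in J_0} \Gamma_j^\bot = \{0\}$ and a neighborhood $V_0$ of $0$ with $V_0 - V_0 \subset W$. For any $\omega_0 \in \ghat$ put $V = \omega_0 + V_0$; then $(V - V) \cap \Gamma_j^\bot = \{0\}$ for every $j \in J_0$, i.e.\ the quotient map $\ghat \to \ghat / \Gamma_j^\bot$ is injective on $V$.

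The second step establishes the displayed identity. Fixing $f$ with $\hat f \in L^\infty$ supported in $V$, one writes $\langle f, T_\gamma g_j \rangle = \int_{\ghat} \hat f(\omega) \overline{\hat g_j(\omega)}\, \overline{\langle \gamma, \omega\rangle}\, d\omega$, so these numbers are the $\Gamma_j$-indexed Fourier coefficients of the $\Gamma_j^\bot$-periodization of $\Phi_j \defined \hat f\, \overline{\hat g_j}$ on the compact group $\ghat / \Gamma_j^\bot$, whose character group is $\Gamma_j$. Because $V$ injects into $\ghat / \Gamma_j^\bot$, the periodization has no overlap, and Parseval's relation on $\ghat / \Gamma_j^\bot$ (with the quotient measure of total mass $\covol(\Gamma_j^\bot)$) combined with $\covol(\Gamma_j^\bot) = 1/\covol(\Gamma_j)$ yields the identity. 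Summing over $j \in J_0$ and applying the Bessel bound gives
\[
 \int_{\ghat} |\hat f(\omega)|^2 \sum_{j \in J_0} \frac{1}{\covol(\Gamma_j)} |\hat g_j(\omega)|^2 \, d\omega \le \sum_{j \in J} \sum_{\gamma \in \Gamma_j} |\langle f, T_\gamma g_j\rangle|^2 \le B_g \|f\|^2 = B_g \int_{\ghat} |\hat f(\omega)|^2 \, d\omega.
\]
Since $|\hat f|^2$ ranges over essentially all nonnegative weights supported in $V$, this forces $\sum_{j \in J_0} \frac{1}{\covol(\Gamma_j)} |\hat g_j(\omega)|^2 \le B_g$ for a.e.\ $\omega \in V$. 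As $V_0$ has fixed size independent of $\omega_0$, countably many translates $V$ cover $\ghat$ by second countability, so the bound holds a.e.\ on $\ghat$, and letting $J_0 \uparrow J$ through monotone convergence completes the proof.

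The main obstacle is the infinitude of the lattice family: one cannot select a single frequency neighborhood that separates all the dual lattices $\Gamma_j^\bot$ at once, since the covolumes $\covol(\Gamma_j)$ may tend to zero and render $\Gamma_j^\bot$ arbitrarily dense near $0$. Passing to a finite $J_0$ before choosing $V$ is precisely what sidesteps this difficulty, and the nonnegativity of the summands is what allows monotone convergence to recover the full Calder\'on sum afterwards. A secondary, bookkeeping issue is fixing the Haar-measure normalizations on $\ghat / \Gamma_j^\bot$ so that Parseval produces the factor $1/\covol(\Gamma_j)$; this is exactly where the conventions recorded in Section~\ref{sec:notation-lca-groups}, namely Weil's formula and $\covol(\Gamma)\covol(\Gamma^\bot) = 1$, are invoked.
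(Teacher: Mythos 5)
Your proof is correct. The paper does not prove this lemma itself but defers to \cite[Proposition 4.1]{MR1916862} and \cite{JakobsenReproducing2014,MR2283810}, and the argument there is essentially the same frequency-localization strategy you use --- restricting $\hat{f}$ to a window $V$ with $(V-V)\cap\Gamma_j^\bot=\{0\}$ so that only the $\alpha=0$ term of the periodization survives --- with your reduction to finite subfamilies $J_0$ followed by monotone convergence correctly handling the passage to infinitely many lattices.
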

The sum  $\sum_{j \in J} \frac{1}{\covol(\Gamma_j)}
|\hat{g}_j(\omega)|^2$ is called the \emph{Calder\'on sum} of the GSI system in
accordance with wavelet analysis. For a proof of
Lemma~\ref{lem:calderon_upper} in $L^2(\R^n)$ we refer to \cite[Proposition
4.1]{MR1916862}, and for a proof in $L^2(G)$ we refer to \cite{JakobsenReproducing2014,MR2283810}. 
\begin{lemma}
 \label{lem:wf-convergence}
Fix $f \in \cD$. Let $(\Gamma_j)_{j,\in J}$ denote a system of
lattices, and $(g_j)_{j \in J}$, $(h_j)_{j \in J}$
  associated function systems of Bessel generators. Then $w_{f;g,h}$ is uniformly continuous. Moreover, the right-hand side of  
  \[
   w_{f;g,h} = \sum_{j \in J} w_{f;g,h,j}
  \]
 converges uniformly and unconditionally on compact sets. 
\end{lemma}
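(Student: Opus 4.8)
The plan is to dominate every partial sum by the two ``diagonal'' Bessel sums $w_{f;g,j}$ and $w_{f;h,j}$, and then to upgrade pointwise control to uniform-on-compacts control through a monotonicity (Dini) argument. Throughout, I would write $C_g$ and $C_h$ for the analysis operators of the two Bessel systems, i.e.\ $C_g\phi = (\innerprod{\phi}{T_\gamma g_j})_{j\in J,\gamma\in\Gamma_j}$ and likewise for $C_h$; these are bounded operators into $\ell^2$ with $\norm{C_g}^2\le B_g$ and $\norm{C_h}^2\le B_h$. With this notation $w_{f;g,j}(x)=\sum_{\gamma\in\Gamma_j}\abs{\innerprod{T_xf}{T_\gamma g_j}}^2$ is the $j$-th block of $\norm{C_gT_xf}^2$, and analogously for $h$.

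First I would establish absolute, hence unconditional, pointwise convergence. Applying the Cauchy--Schwarz inequality in \eqref{eq:wfj-def} first over $\gamma\in\Gamma_j$ and then over $j\in J$ gives
\[
 \sum_{j\in J}\abs{w_{f;g,h,j}(x)} \le \Big(\sum_{j\in J} w_{f;g,j}(x)\Big)^{1/2}\Big(\sum_{j\in J} w_{f;h,j}(x)\Big)^{1/2} \le \sqrt{B_g B_h}\,\norm{f}^2,
\]
where the last inequality uses $\sum_{j} w_{f;g,j}(x)=\norm{C_gT_xf}^2\le B_g\norm{T_xf}^2=B_g\norm{f}^2$ together with the analogous bound for $h$. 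Since this bound is finite and independent of $x$, the series $\sum_j w_{f;g,h,j}(x)$ converges absolutely, so $w_{f;g,h}(x)$ is well defined and the convergence is unconditional at every point.

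The crux is to pass from pointwise to uniform-on-compacts convergence, and this is where the Bessel hypothesis genuinely enters. I would argue that $x\mapsto \sum_{j} w_{f;g,j}(x)=\norm{C_gT_xf}^2$ is \emph{continuous}: the orbit map $x\mapsto T_xf$ is continuous from $G$ into $L^2(G)$ and $C_g$ is bounded, so $x\mapsto C_gT_xf$ is continuous into $\ell^2$ and its squared norm is continuous. Now each $w_{f;g,j}$ is a non-negative trigonometric polynomial (Lemma~\ref{lem:wfj-convergence} with $h_j=g_j$), so the finite partial sums increase to the continuous limit $\sum_j w_{f;g,j}$; by Dini's theorem (applied to any enumeration of $J$) the convergence is uniform on every compact set, whence the tails $\sum_{j\notin F}w_{f;g,j}$ tend to $0$ uniformly on compacts as the finite set $F$ exhausts $J$, and the same holds for $h$. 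Combined with the Cauchy--Schwarz tail estimate
\[
 \Big|w_{f;g,h}(x)-\sum_{j\in F}w_{f;g,h,j}(x)\Big| \le \Big(\sum_{j\notin F}w_{f;g,j}(x)\Big)^{1/2}\Big(\sum_{j\notin F}w_{f;h,j}(x)\Big)^{1/2}\le \sqrt{B_h}\,\norm{f}\Big(\sum_{j\notin F}w_{f;g,j}(x)\Big)^{1/2},
\]
this yields uniform and unconditional convergence of $\sum_j w_{f;g,h,j}$ on compact sets.

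Finally, for uniform continuity I would use the identity $w_{f;g,h}(x)=\innerprod[\ell^2]{C_gT_xf}{C_hT_xf}$. The point is that $x\mapsto T_xf$ is not merely continuous but \emph{uniformly} continuous from $G$ into $L^2(G)$, since $\norm{T_xf-T_yf}=\norm{T_{x-y}f-f}$ depends only on $x-y$; applying the bounded operators $C_g,C_h$ preserves uniform continuity and uniform boundedness of the maps $x\mapsto C_gT_xf$ and $x\mapsto C_hT_xf$ into $\ell^2$, and the inner product of two bounded, uniformly continuous Hilbert-space-valued functions is uniformly continuous by the bilinear estimate $\abs{\innerprods{u_x}{v_x}-\innerprods{u_y}{v_y}}\le \norm{u_x-u_y}\norm{v_x}+\norm{u_y}\norm{v_x-v_y}$. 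I expect the main obstacle to be precisely the continuity of the diagonal sum $\sum_j w_{f;g,j}$ that underpins the Dini step: everything else is Cauchy--Schwarz bookkeeping, but it is this continuity that converts the mere Bessel bound into uniform control of the tails on compact sets.
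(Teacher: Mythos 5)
Your proof is correct and follows essentially the same route as the paper: absolute pointwise convergence from Cauchy--Schwarz and the Bessel bounds, Dini's theorem applied to the nonnegative diagonal sums $\sum_j w_{f;g,j}$, the Cauchy--Schwarz tail estimate to transfer uniform-on-compacts convergence to the mixed case, and uniform continuity from the (uniform) strong continuity of translation. The only cosmetic difference is your phrasing via the analysis operators $C_g,C_h$ and your direct verification that $x\mapsto\norm{C_gT_xf}^2$ is continuous, where the paper instead reuses its uniform-continuity computation with $h=g$.
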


\begin{proof}
To begin with, note that the Bessel assumption on the generators guarantees that the sum defining $w_{f;g,h}$ converges pointwise absolutely. 
 We next prove uniform continuity of the limit. Given $x_1,x_2 \in G$, we compute
\begin{eqnarray*}
 \lefteqn{\left| w_{f;g,h}(x_1)-w_{f;g,h}(x_2) \right|} \\ & \le & \sum_{j \in J} \sum_{\gamma \in \Gamma_j} \left| \langle T_{x_1} f , T_\gamma g_j \rangle \langle T_\gamma h_j, T_{x_1} f \rangle - 
 \langle T_{x_2} f , T_\gamma g_j \rangle \langle T_\gamma h_j, T_{x_2} f \rangle \right| \\
 & \le & \sum_{j \in J} \sum_{\gamma \in \Gamma_j} |\langle (T_{x_1}-T_{x_2}) f ,  T_\gamma g_j \rangle \langle T_\gamma h_j, T_{x_1} f \rangle| +
 |\langle (T_{x_1} f ,  T_\gamma g_j \rangle \langle T_\gamma h_j, (T_{x_2} - T_{x_1}) f \rangle| \\
 & \le & B_g  \| (T_{x_1}- T_{x_2})  f  \|  \cdot B_h \| T_{x_1} f \|  +   B_g \| T_{x_1} f \| \cdot B_h \| (T_{x_1} - T_{x_2}) f \| \\
 & \le & 2 B_g B_h \| T_{x_1-x_2} f - f \|^2~,
 \end{eqnarray*}
 using the Bessel constants $B_g,B_h$ 
and that the regular representation $x \mapsto T_x$ is a
homomorphism. Since this representation is strongly continuous, for any
$\epsilon >0$ there exists a neighborhood $U$ of the identity element such that $\| T_{x_1-x_2} f - f \|^2 < \epsilon$ whenever $x_1-x_2 \in U$. This shows uniform continuity of $w_{f;g,h}$. 

It remains to show uniform and unconditional convergence on compact sets. 
 We first consider the case $g_j = h_j$. Here the terms on the right-hand side are positive, continuous functions, whose partial sums are bounded by the Bessel constant of the system generated by the $(g_j)_{j \in J}$. We already showed that the limit function is continuous, as well, and since the group is metrizable \cite{HeRo}, we may apply Dini's theorem to conclude that the sum converges uniformly on compact sets. 
 
 For the general case, fix $\epsilon >0$ and a compact set $K \subset
 G$. 
 Fix a finite set $J' \subset J$ with the property that, for all $J''
 \supset J'$, it holds $|w_{f:g}(x)-\sum_{j \in J''} w_{f;g,j}(x)| <
 \epsilon/B_h$ for all $x \in K$. 
 
 The Cauchy-Schwartz inequality yields, for all $j \in J$, that 
 \[
  |w_{f;g,h,j}(x)| \le w_{f;g,j}^{1/2}(x) w_{f;h,j}^{1/2}(x).
 \]
 It follows, by a second application of the Cauchy-Schwarz inequality, that 
 \begin{eqnarray*}
  \left|w_{f;g,h}(x) - \sum_{j \in J''} w_{f;g,h,j}(x) \right| & = & \left| \sum_{j \in J \setminus J''} w_{f;g,h,j}(x) \right| \\
 &  \le & \underbrace{\left( \sum_{j \in J \setminus J''} w_{f;g,j} (x) \right)^{1/2}}_{< \epsilon/B_h}    \underbrace{\left( \sum_{j \in J \setminus J''} w_{f;h,j} (x) \right)^{1/2}}_{\le B_h} \\
  & < & \epsilon~,
 \end{eqnarray*}
whenever $x \in K$. This proves uniform and unconditional convergence on compact sets. 
 \end{proof}

\subsection{Almost periodic functions and the unconditionally
  convergence property}
\label{sec:spac-almost-peri}

The significance of almost periodic functions for GSI systems comes from the fact that the function $w_{f;g,h}$ is a sum of trigonometric polynomials. As soon as this sum converges uniformly, $w_{f;g,h}$ is an almost periodic function, and this fact allows to invoke results from the Fourier analysis of such functions, which we now recall. 
Our main sources for this subsection are \cite[Chapter 18]{HeRo} and \cite{Pankov}. 
\begin{definition}
A function  $f \in C_b(G)$ is called \emph{almost periodic} if the set $\{ T_x f : x \in G \} \subset C_b(G)$ is relatively compact with respect to the uniform norm. The space of all almost periodic functions on $G$ is denoted by $\AP(G)$. 
\end{definition}

As elucidated in \cite{Pankov}, almost periodic functions are best
understood in connection with the \emph{Bohr compactification} $G_B$
of the group $G$. This group is constructed by taking the dual group of  $\ghat$, where the latter is endowed with the discrete topology. By construction, $G_B$ is a compact LCA group, and the duality between $G$ and $\ghat$ gives rise to a canonical embedding $i_G: G \to G_B$, an injective, continuous group homomorphism with dense image. Throughout the following, we will identify $G$ with its image under $i_G$, i.e., with a subgroup of $G_B$. If $G$ is noncompact, this image is a proper subset (being noncompact), measurable (being $\sigma$-compact), and therefore of measure zero: Any measurable subgroup of positive measure contains a neighborhood of the neutral element, and is therefore open.

We now have the following characterizations of almost periodic functions. Note that by definition, a trigonometric polynomial is a linear combination of characters. 
\begin{theorem} \label{thm:char_ap}
Let $f \in C_b(G)$. Then the following are equivalent:
\begin{enumerate}[(a)]
  \item $f \in \AP(G)$.
  \item $f$ is the uniform limit of trigonometric polynomials. 
  \item $f$ has a (necessarily unique) continuous extension  $f_B : G_B \to \mathbb{C}$. 
 \end{enumerate}
\end{theorem}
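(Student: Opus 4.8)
The plan is to prove the four implications \textbf{(b)}$\Rightarrow$\textbf{(c)}, \textbf{(c)}$\Rightarrow$\textbf{(b)}, \textbf{(b)}$\Rightarrow$\textbf{(a)} and \textbf{(a)}$\Rightarrow$\textbf{(b)}; together these give all the asserted equivalences, with the single genuinely hard step isolated in the last one. I first record the uniqueness claim in (c): since $G$ is dense in $G_B$ and $G_B$ is Hausdorff, two continuous extensions of $f$ agree on a dense set and hence everywhere, so it suffices to produce \emph{some} continuous extension.

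For the equivalence of (b) and (c) I work inside the compact group $G_B$. By Pontryagin duality the dual of $G_B=\widehat{\ghat_d}$ (where $\ghat_d$ denotes $\ghat$ equipped with the discrete topology) is $\ghat_d$ itself, so the continuous characters of $G_B$ are exactly the elements $\omega\in\ghat$, and each restricts on $G\subset G_B$ to the character $\langle\cdot,\omega\rangle$. For \textbf{(b)}$\Rightarrow$\textbf{(c)}: every character, hence every trigonometric polynomial, extends continuously from $G$ to $G_B$; if $f$ is a uniform limit of trigonometric polynomials $p_n$, then the corresponding extensions form a Cauchy sequence in $C(G_B)$ (the supremum over $G_B$ equals the supremum over the dense set $G$), and their limit is a continuous extension of $f$. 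For \textbf{(c)}$\Rightarrow$\textbf{(b)}: the trigonometric polynomials form a unital, conjugation-closed subalgebra of $C(G_B)$ (a product of characters is a character and $\overline{\langle\cdot,\omega\rangle}=\langle\cdot,-\omega\rangle$) that separates points; by Stone--Weierstrass it is uniformly dense in $C(G_B)$, and restricting a uniform approximation of $f_B$ back to $G$ writes $f$ as a uniform limit of trigonometric polynomials.

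For \textbf{(b)}$\Rightarrow$\textbf{(a)} I first note that a single character is almost periodic: from $T_x\langle\cdot,\omega\rangle=\overline{\langle x,\omega\rangle}\,\langle\cdot,\omega\rangle$ its orbit equals $\{c\,\langle\cdot,\omega\rangle:|c|=1\}$, which is compact. A finite linear combination then has its orbit contained in a bounded subset of the finite-dimensional span of the characters involved, hence relatively compact, so trigonometric polynomials lie in $\AP(G)$. It remains to see that $\AP(G)$ is uniformly closed. Since $C_b(G)$ is complete, relative compactness of an orbit is equivalent to its total boundedness; as each $T_x$ is an isometry, $\norm{T_x f-T_x g}=\norm{f-g}$, and a routine $\epsilon/3$ estimate upgrades total boundedness of the orbits of an approximating sequence to total boundedness of the orbit of its uniform limit. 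Thus every uniform limit of trigonometric polynomials belongs to $\AP(G)$.

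The hard part will be \textbf{(a)}$\Rightarrow$\textbf{(b)}, the classical approximation theorem for almost periodic functions. Conceptually I would phrase it through uniform structures: giving $G$ the uniformity inherited from the compact group $G_B$ (the \emph{Bohr uniformity}, whose basic entourages are $\{(x,y):|\langle x-y,\omega_i\rangle-1|<\delta,\ i=1,\dots,n\}$), a bounded function extends continuously to $G_B$ precisely when it is uniformly continuous for this uniformity, so (a) must be shown to force this Bohr-uniform continuity, i.e.\ that for every $\epsilon>0$ there are finitely many characters $\omega_1,\dots,\omega_n$ and a $\delta>0$ with $\norm{T_a f-f}<\epsilon$ whenever $|\langle a,\omega_i\rangle-1|<\delta$ for all $i$. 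This is exactly where the substantive analysis sits: the standard route constructs the invariant mean on $\AP(G)$, forms the associated Bochner--Fej\'er kernels, and proves that the resulting Bochner--Fej\'er polynomials converge uniformly to $f$. Rather than reproduce this machinery I would cite it from \cite[Chapter 18]{HeRo} or \cite{Pankov}; once it is in hand, $f$ is a uniform limit of trigonometric polynomials, which is (b) and, via the already established chain, also yields the continuous extension (c).
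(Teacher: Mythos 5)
Your proposal is correct, and it is consistent with the paper, which states this theorem as quoted background without proof, citing \cite[Chapter 18]{HeRo} and \cite{Pankov}, and only remarks that (c) $\Rightarrow$ (b) follows because every continuous function on the compact group $G_B$ is a uniform limit of trigonometric polynomials and $G$, $G_B$ share the same dual $\ghat$ --- exactly your Stone--Weierstrass step. Your elementary implications (b) $\Rightarrow$ (c), (c) $\Rightarrow$ (b), (b) $\Rightarrow$ (a) are all sound, and deferring the genuinely hard direction (a) $\Rightarrow$ (b) to the same classical references is in line with what the paper itself does.
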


We will call the function $f_B$ from part (c) the \emph{Bohr extension} of $f \in \AP(G)$. Part (c) opens the door to the Fourier analysis of almost periodic functions (and therefore for a proof of (b)), by making Fourier expansions of $f_B$ available for the analysis of $f$. Since $G_B$ is compact, every continuous function on $G_B$ is the continuous limit of trigonometric polynomials. But $G$ and $G_B$ share the same dual $\ghat$ (only the induced topologies are different), hence this approximation result translates to functions in $\AP(G)$. 
In order to compute the Fourier coefficients of $f_B$, we need to integrate over $G_B$, or better, devise an integration process on $G$ that allows to compute these integrals without explicitly passing to the extension $f_B$. This is where the \emph{mean} on $\AP(G)$ comes into play, which is described in the following result, which summarizes Theorems~18.8-18.10 from \cite{HeRo}.
\begin{theorem}
\label{thm_mean}
 Let $G$ denote a second countable LCA group.
 \begin{enumerate}[(a)]
  \item There exists a sequence $(H_n)_{n \in \mathbb{N}}$ of open, relative compact subsets $H_n \subset G$ with $G = \bigcup_{n \in \mathbb{N}} H_n$ and such that, for all $x \in G$, 
  \[
   \lim_{n \to \infty} \frac{\mu_G((x+H_n) \cap (G \setminus H_n))}{\mu_G(H_n)} = 0~.
  \]
\item Let $(H_n)_{n \in \mathbb{N}}$ be a sequence of subsets as in part (a). For any $f \in \AP(G)$, the expression
\[
 \mean{f} = \lim_{n \to \infty} \frac{1}{\mu_G(H_n)} \int_{H_n} f(x) dx
\] is well-defined and finite. 
Furthermore, if $f_B$ denotes the Bohr extension of $f$, then 
\[
 \mean{f} = \int_{G_B} f_B(y) dy~.
\]
\item As a consequence of (b), $\mean{f}$ is independent of the choice of $(H_n)_{n \in \mathbb{N}}$. 
 \end{enumerate}
\end{theorem}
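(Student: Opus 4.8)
The plan is to establish the three parts in order, with the bulk of the technical work falling on the construction in part (a) and the conceptual heart resting on the character computation in part (b). Throughout I will use that the averaging functionals $m_n(f) = \frac{1}{\mu_G(H_n)}\int_{H_n} f$ are positive, linear, and satisfy $|m_n(f)| \le \|f\|_\infty$ uniformly in $n$.

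For part (a), I would construct a F\o{}lner sequence for $G$ from the structure theory of LCA groups. Writing $G \cong \mathbb{R}^a \times H$, where $H$ contains a compact open subgroup $K$, the quotient $H/K$ is a countable discrete abelian group. On the Euclidean factor I would take the cubes $(-n,n)^a$, whose asymptotic translation invariance is elementary. On $H/K$, amenability of countable abelian groups yields a F\o{}lner sequence: one exhausts $H/K$ by finitely generated subgroups (each of the form $\mathbb{Z}^k \times (\text{finite})$, for which box-shaped F\o{}lner sets are explicit) and extracts a diagonal sequence. Lifting these sets to unions of $K$-cosets in $H$ and forming products with the Euclidean F\o{}lner sets produces open, relatively compact sets $H_n$ exhausting $G$; the quotient $\mu_G((x+H_n)\cap(G\setminus H_n))/\mu_G(H_n) \to 0$ then follows by checking the condition separately on each factor. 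I expect this step to be the main obstacle, since it is the only place where the full structure theory of $G$ is genuinely needed.

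For part (b), the key computation is evaluating $m_n$ on characters. For the trivial character $m_n \equiv 1$; for a nontrivial $\chi \in \ghat$, translation invariance of Haar measure gives $\chi(x_0)\int_{H_n}\chi = \int_{x_0+H_n}\chi$, while the F\o{}lner property of part (a) forces $\bigl| \int_{H_n}\chi - \int_{x_0+H_n}\chi \bigr| = \littleo(\mu_G(H_n))$. Hence $(\chi(x_0)-1)\mean{\chi} = 0$ for every $x_0 \in G$, and choosing $x_0$ with $\chi(x_0)\neq 1$ yields $\lim_n m_n(\chi) = 0$. On the other hand, orthogonality of characters on the compact group $G_B$ gives $\int_{G_B}\chi_B\,dy = 1$ or $0$ according to whether $\chi$ is trivial or not. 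Thus $\mean{\cdot}$ and $\int_{G_B}(\cdot)_B\,dy$ agree on characters, hence by linearity on all trigonometric polynomials.

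To pass to a general $f \in \AP(G)$, I would invoke Theorem~\ref{thm:char_ap}(b) to write $f$ as a uniform limit of trigonometric polynomials $p_k$. The uniform bound $|m_n(f-p_k)| \le \|f-p_k\|_\infty$ makes $m_n(p_k) \to m_n(f)$ uniformly in $n$, so a standard interchange-of-limits ($\varepsilon/3$) argument shows that $\lim_n m_n(f)$ exists and equals $\lim_k \lim_n m_n(p_k) = \lim_k \int_{G_B}(p_k)_B\,dy$. Since $p_k \to f$ uniformly on the dense subset $G \subset G_B$ and the Bohr extensions are continuous, $(p_k)_B \to f_B$ uniformly on the compact group $G_B$, whence $\int_{G_B}(p_k)_B\,dy \to \int_{G_B} f_B\,dy$. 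This establishes both the existence and finiteness of $\mean{f}$ and the identity $\mean{f} = \int_{G_B} f_B\,dy$. Part (c) is then immediate, since the right-hand side $\int_{G_B} f_B\,dy$ makes no reference to the chosen sequence $(H_n)$.
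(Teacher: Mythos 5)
Your proposal is correct. The paper does not prove this statement itself --- it is presented as a summary of Theorems~18.8--18.10 of \cite{HeRo} --- and your argument is essentially the standard one underlying that citation: the structure theorem $G \cong \R^a \times H$ with $H$ containing a compact open subgroup to build a F\o{}lner exhaustion in (a), the character computation $(\chi(x_0)-1)\mean{\chi}=0$ combined with orthogonality on the compact group $G_B$ in (b), and uniform approximation by trigonometric polynomials via Theorem~\ref{thm:char_ap}(b) together with the uniform bound $\abs{m_n(f)}\le \norm[\infty]{f}$ to pass to general $f\in\AP(G)$. The only point deserving a word of care is the diagonal extraction in (a): one must choose the F\o{}lner sets in the increasing union of finitely generated subgroups of $H/K$ large enough that adjoining an exhausting sequence of finite sets does not destroy the F\o{}lner ratio, but this is routine and your outline accommodates it.
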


The quantity $\mean{f}$, as defined in Theorem~\ref{thm_mean}, denotes  the \emph{mean} of $f \in \AP(G)$. Given any $\alpha \in \ghat$, we can then define the \emph{Fourier coefficient} of $f$ as 
\[
 \hat{f}(\alpha) = \mean{f \overline{\alpha}}~.
\]
Using the facts that the map $\AP(G) \ni f \mapsto f_B \in C(G_B)$ is injective, and that the duals of $G$ and $G_B$ coincide, standard facts of Fourier analysis on $G_B$ give rise to the following important theorem.
\begin{theorem} Let $f \in \AP(G)$. We then have:
 \begin{enumerate}
  \item[(a)] \emph{Fourier uniqueness}: $f = 0$ if and only if $\hat{f}(\alpha) = 0$, for all $\alpha \in \ghat$.
  \item[(b)] \emph{Plancherel Theorem}: $\mean{|f|^2} = \sum_{\alpha \in \ghat} |\hat{f}(\alpha)|^2$. In particular, only countably many Fourier coefficients are nonzero. 
 \end{enumerate}
\end{theorem}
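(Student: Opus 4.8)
The plan is to transfer the entire statement to the Bohr compactification $G_B$ and then invoke the classical Plancherel theorem for the compact abelian group $G_B$. Recall from Theorem~\ref{thm_mean}(b) that $\mean{h} = \int_{G_B} h_B \, dy$ for every $h \in \AP(G)$, where $dy$ is the Haar measure on $G_B$; taking $h \equiv 1$ shows this measure is normalized to total mass $1$. Since $G$ and $G_B$ share the same dual $\ghat$ (only the induced topologies differ), each character $\alpha \in \ghat$ extends to a continuous character $\alpha_B$ of $G_B$, and these are exactly the characters of the compact group $G_B$. By the standard orthogonality relations on a compact abelian group together with the normalization $\mu_{G_B}(G_B)=1$, the family $\set{\alpha_B : \alpha \in \ghat}$ is an orthonormal system in $L^2(G_B,dy)$; its completeness follows from the Stone--Weierstrass theorem, as the trigonometric polynomials separate points of $G_B$, are closed under conjugation, and contain the constants, hence are dense in $C(G_B)$ and therefore in $L^2(G_B)$.

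The next step is to identify the Fourier coefficients of $f$ with genuine $L^2(G_B)$ inner products. For $\alpha \in \ghat$ the function $f\overline{\alpha}$ lies in $\AP(G)$, and its Bohr extension is $f_B \overline{\alpha_B}$, since the product of the continuous extensions is a continuous extension of the product, which is unique by Theorem~\ref{thm:char_ap}(c). Hence
\[
 \hat{f}(\alpha) = \mean{f\overline{\alpha}} = \int_{G_B} f_B \overline{\alpha_B}\,dy = \langle f_B, \alpha_B \rangle_{L^2(G_B)},
\]
so that $(\hat{f}(\alpha))_{\alpha \in \ghat}$ is precisely the sequence of ordinary Fourier coefficients of $f_B \in L^2(G_B)$ with respect to the orthonormal basis $\set{\alpha_B : \alpha \in \ghat}$.

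With this identification part~(a) is immediate: if $\hat{f}(\alpha)=0$ for all $\alpha$, then all Fourier coefficients of $f_B$ vanish, so completeness of the orthonormal system forces $f_B = 0$ in $L^2(G_B)$; as $f_B$ is continuous and $G_B$ is compact with Haar measure of full support, $f_B \equiv 0$, and the injectivity of $f \mapsto f_B$ noted above then gives $f = 0$, the converse being trivial. For part~(b) I apply Parseval's identity in $L^2(G_B)$ to $f_B$, which yields
\[
 \norm[L^2(G_B)]{f_B}^2 = \sum_{\alpha \in \ghat} \abs{\langle f_B, \alpha_B \rangle}^2 = \sum_{\alpha \in \ghat} \abs{\hat{f}(\alpha)}^2.
\]
To rewrite the left-hand side, note that $\abs{f}^2 = f\overline{f} \in \AP(G)$ with Bohr extension $\abs{f_B}^2$, so Theorem~\ref{thm_mean}(b) gives $\norm[L^2(G_B)]{f_B}^2 = \int_{G_B} \abs{f_B}^2\,dy = \mean{\abs{f}^2}$, while the countability of the nonzero coefficients is forced by convergence of the series. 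The only point requiring care is the completeness of the character system in $L^2(G_B)$, but this is the standard Peter--Weyl/Stone--Weierstrass fact for compact abelian groups and poses no genuine obstacle; the whole theorem is essentially the classical Plancherel theorem on $G_B$ read back through the isometric correspondence $f \leftrightarrow f_B$.
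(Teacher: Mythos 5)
Your proof is correct and follows exactly the route the paper indicates (the paper states the theorem as a consequence of ``standard facts of Fourier analysis on $G_B$'' via the injection $f \mapsto f_B$ and the coincidence of the duals of $G$ and $G_B$, without writing out details). You have simply made those standard facts explicit --- orthonormality and Stone--Weierstrass completeness of the extended characters in $L^2(G_B)$, the identification $\hat{f}(\alpha) = \langle f_B,\alpha_B\rangle_{L^2(G_B)}$, and Parseval --- and every step checks out.
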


We remark that $\AP(G)$ under the inner product $\innerprod[AP]{f}{g}=\mean{f \,\overline{g}}$ for
$f,g \in \AP(G)$ is a pre-Hilbert space; its completion is
called the Besicovitch space $B^2(G)$, for which $\seq{\alpha}_{\alpha
  \in \ghat}$ is an orthonormal basis. The completion with respect to the norm
$\mean{\abs{\cdot}}$ gives rise to the Besicovitch space $B^1(G)$.

With these definitions in place, we can now introduce a technical condition that will be of central importance to the following.
\begin{definition} \label{def:UCP}
 Let $(\Gamma_j)_{j \in J}$ denote a system of lattices, and let $(g_j)_{j \in J}$ and $(h_j)_{j \in J}$ denote generating systems in $L^2(G)$. 
 \begin{enumerate}[(a)]
\item 
The GSI systems $(T_\gamma g_j)_{j,\gamma}$ and $(T_\gamma
g_j)_{j,\gamma}$ (or, for short, simply $(g_j)_j$ and $(h_j)_j$)  have the \emph{dual $1$-unconditional convergence property}
     (dual $1$-UCP) if, for all $f \in \cD$, $w_{f;g,h} \in \AP(G)$ \emph{and}
   \begin{equation} \label{eqn:con_wf} w_{f;g,h} = \sum_{j \in J}
     w_{f;g,h,j}
   \end{equation}
   unconditionally with respect to $\mean{|\cdot|}$, i.e., for every $\epsilon>0$
   there exists a finite set $J' \subset J$ such that for all finite set
   $J'' \supset J'$,
   \[
   M\left( \left| w_{f;g,h} - \sum_{j \in J''} w_{f;g,h,j} \right|
   \right) < \epsilon
   \]
 \item  The systems $(g_j)_j$ and $(h_j)_j$ have the \emph{dual $\infty$-UCP} if (\ref{eqn:con_wf}) holds
   with uniform convergence.
 \item If $h_j = g_j$ holds, for all $j \in J$, we  say that the system $(g_j)_{j \in J}$ fulfills $p$-UCP, for $p \in \{1, \infty\}$.
 \end{enumerate}
\end{definition}

Note that the H\"older inequality implies $\mean{|f|^p}^{1/p} \le \mean{|f|^q}^{1/q} \le \| f \|_\infty$, whenever $1 \le p\le q < \infty$. In particular, $\infty$-UCP is the stronger condition.

\begin{remark}
 The $1$-UCP condition can be rephrased as requiring that  
 \[ (w_{f;g,h})_{B} = \sum_{j \in J} (w_{f;g,h,j})_{B} 
 \]
with convergence in $L^1(G_B)$, where we again used the subscript $B$
to denote the (continuous) Bohr extension. This is one of the reasons
why the condition $w_{f;g,h} \in \AP(G)$ is included in the
definition of $1$-UCP. Clearly, checking this part can present a nontrivial obstacle. Note however that in the case $p=\infty$, uniform convergence already implies $w_{f;g,h} \in \AP(G)$. 
  Also, note that for the derivation of \emph{necessary} conditions for dual systems, one departs from the assumption that $w_{f;g,h} \equiv 1$, which is clearly in $\AP(G)$. 
\end{remark}

\begin{remark}
As for local integrability conditions, the $p$-UCP depends on the blind
spot set $E$, which is a closed set $E \subset \ghat$ of measure zero.
Since $\mathcal{D}_E \subset \mathcal{D}_\emptyset$ for any blind spot
set $E$, see \eqref{eq:def-D}, it follows that if $p$-UCP holds for
$E=\emptyset$, it holds for any $E$. However, we
usually only need the $p$-UCP to hold for \emph{some} blind spot set $E$.
\end{remark}

%

The Bessel generator assumption and the $\infty$-UCP  can be seen as regularity
assumptions on $w_{f;g,h}$, e.g., both assumptions separately guarantee that
$w_{f;g,h}$ is continuous. The two assumptions are in general
unrelated. Bessel generators $(g_j)_j$ do not imply
$\infty$-UCP and the $\infty$-UCP does not imply Bessel generators.      
However, the following result shows that the analysis windows $(g_j)_{j \in J}$
and synthesis windows $(h_j)_{j \in J}$ can be  separated  in the
verification of the dual $p$-UCP condition, when
combined with a Bessel assumption. 
\begin{lemma}\label{lem:UCP_single} 
 Assume that we are given lattices $(\Gamma_j)_{j \in J}$, and generating systems $(g_j)_{j \in J}$ and $(h_j)_{j \in J}$.
 \begin{itemize}
  \item[(a)] Suppose that $(g_j)_{j \in J}$ fulfills $\infty$-UCP, and that $(h_j)_{j \in J}$ is a system of Bessel generators. Then $(g_j)_{j \in J}$ and $(h_j)_{j \in J}$ fulfill the dual $\infty$-UCP.  The same result holds with assumptions on $(h_j)_{j \in J}$ and $(g_j)_{j \in J}$ interchanged. 
  \item[(b)] Suppose that $(g_j)_{j \in J}$ fulfills $1$-UCP, and that $(h_j)_{j \in J}$ is a system of Bessel generators such that $w_{f;g,h} \in \AP(G)$.  Then $(g_j)_{j \in J}$, $(h_j)_{j \in J}$ fulfill the dual $1$-UCP. The same result holds with assumptions on $(h_j)_{j \in J}$ and $(g_j)_{j \in J}$ interchanged. 
 \end{itemize}
\end{lemma}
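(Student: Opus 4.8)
The plan is to reduce both parts to the pointwise Cauchy--Schwarz bound already established in the proof of Lemma~\ref{lem:wf-convergence}, namely
\[
  \abs{w_{f;g,h,j}(x)} \le w_{f;g,j}^{1/2}(x)\,w_{f;h,j}^{1/2}(x), \qquad x \in G,\ j \in J,
\]
followed by a second Cauchy--Schwarz step over the summation index $j$. I would fix $f \in \cD$ and first record the two ingredients that drive the estimate: under either UCP hypothesis on $(g_j)_{j\in J}$ one has $w_{f;g} \in \AP(G) \subset C_b(G)$, so $w_{f;g}$ is bounded, while the defining Bessel inequality for $(h_j)_{j \in J}$ gives $w_{f;h}(x) = \sum_{j} w_{f;h,j}(x) \le B_h \norm{f}^2$ uniformly in $x$. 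Summing the displayed bound over $j$ and applying Cauchy--Schwarz then shows that $\sum_{j} \abs{w_{f;g,h,j}(x)}$ converges for every $x$, so $w_{f;g,h}(x) = \sum_{j} w_{f;g,h,j}(x)$ is well-defined pointwise; and for any finite $J'' \subset J$ the tail $R_{J''} \defined w_{f;g,h} - \sum_{j \in J''} w_{f;g,h,j} = \sum_{j \in J \setminus J''} w_{f;g,h,j}$ would be estimated by
\[
  \abs{R_{J''}(x)} \le \Big( \sum_{j \in J\setminus J''} w_{f;g,j}(x)\Big)^{1/2} \Big( \sum_{j \in J\setminus J''} w_{f;h,j}(x)\Big)^{1/2} \le \big(B_h \norm{f}^2\big)^{1/2}\Big(\sum_{j \in J\setminus J''} w_{f;g,j}(x)\Big)^{1/2}.
\]
The point of the last step is to use the Bessel bound to pull the $h$-tail out as a uniform constant, leaving only the $g$-tail, which is exactly the quantity governed by the UCP hypothesis on $(g_j)_{j \in J}$.

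For part~(a) I would assume $\infty$-UCP for $(g_j)_{j\in J}$. Since the summands are non-negative, $\sum_{j \in J\setminus J''} w_{f;g,j} = w_{f;g} - \sum_{j \in J''} w_{f;g,j} \to 0$ uniformly in $x$ as $J''$ exhausts $J$; inserting this into the tail estimate yields $\sup_{x}\abs{R_{J''}(x)} \to 0$, i.e.\ $\sum_{j} w_{f;g,h,j}$ converges to $w_{f;g,h}$ uniformly and unconditionally. Being a uniform limit of the trigonometric polynomials $\sum_{j \in J''} w_{f;g,h,j}$ (Lemma~\ref{lem:wfj-convergence}), the limit $w_{f;g,h}$ then lies in $\AP(G)$ by Theorem~\ref{thm:char_ap}, which is precisely the dual $\infty$-UCP.

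For part~(b) I would assume $1$-UCP for $(g_j)_{j \in J}$ together with $w_{f;g,h} \in \AP(G)$. The latter makes $R_{J''}$ a difference of almost periodic functions, hence itself in $\AP(G)$, so that $\mean{\abs{R_{J''}}}$ is defined. The $g$-tail $\sum_{j \in J\setminus J''} w_{f;g,j} = w_{f;g} - \sum_{j \in J''} w_{f;g,j}$ is a non-negative element of $\AP(G)$, and so is its square root (the composition of an almost periodic function with a map continuous on its bounded range is again almost periodic). Taking means in the tail estimate and then invoking the H\"older inequality $\mean{\abs{u}} \le \mean{\abs{u}^2}^{1/2}$ recorded after Definition~\ref{def:UCP}, with $u = \big(\sum_{j \in J\setminus J''} w_{f;g,j}\big)^{1/2}$, gives
\[
  \mean{\abs{R_{J''}}} \le \big(B_h \norm{f}^2\big)^{1/2}\, \mean{\sum_{j \in J\setminus J''} w_{f;g,j}}^{1/2},
\]
and the $1$-UCP of $(g_j)_{j \in J}$ makes the mean on the right arbitrarily small for $J''$ large, so $\mean{\abs{R_{J''}}} \to 0$. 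The interchanged statements in both parts follow by symmetry, since $\overline{w_{f;g,h,j}} = w_{f;h,g,j}$ and both $\sup\abs{\cdot}$ and $\mean{\abs{\cdot}}$ are invariant under conjugation.

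I expect the main obstacle to be not the inequalities themselves but the bookkeeping of which functions are almost periodic and hence \emph{meanable}: the Bessel hypothesis by itself does not make $w_{f;h}$ or the intermediate tail sums almost periodic, which is exactly why the $h$-tail has to be controlled uniformly and taken outside the mean, and why the membership $w_{f;g,h} \in \AP(G)$ must be imposed as a separate hypothesis in part~(b), whereas in part~(a) it is produced for free by the uniform convergence.
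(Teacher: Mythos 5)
Your proposal is correct and follows essentially the same route as the paper's proof: the same Cauchy--Schwarz estimate bounding the tail of $w_{f;g,h}$ by $B_h^{1/2}\norm{f}$ times the square root of the $g$-tail, uniform convergence plus Theorem~\ref{thm:char_ap} for part~(a), the H\"older inequality $\mean{\abs{u}}\le\mean{\abs{u}^2}^{1/2}$ for part~(b), and conjugation symmetry for the interchanged statements. Your extra care about which functions are almost periodic (and hence meanable) in part~(b) is a point the paper's proof passes over silently, but it does not change the argument.
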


\begin{proof}
 Note that, for any finite set $J' \subset J$, and any $x \in G$,  
 \begin{eqnarray*}
  \left| w_{f;g,h}(x) - \sum_{j \in J'} w_{f;g,h,j}(x) \right| & \le & \sum_{j \in J \setminus J'} \sum_{\gamma \in \Gamma_j} \left| \langle T_x f, T_\gamma g_j \rangle  \langle T_\gamma h_j . T_x f \rangle \right| \\ & \le & \left| \sum_{j \in J \setminus J'} w_{f;g,j}(x) \right|^{1/2} w_{f;h}(x)^{1/2}  \\
  & \le & B_h^{1/2} \| f \| \left| w_{f;g}(x)-\sum_{j \in J'} w_{f;g,j}(x) \right|^{1/2}
  \end{eqnarray*}
  Now assuming that $(g_j)_{j \in J}$ fulfills $\infty$-UCP, yields the desired conclusions about the dual system $(g_j)_{j \in J},(h_j)_{j \in J}$; in particular uniform convergence of the series yields $w_{f;g,h} \in \AP(G)$.  Since $w_{f;g,h} = \overline{w_{f;h,g}}$, the second statement of (a) follows. 
  In the case of $1$-UCP, H\"older's inequality implies 
  \[ M \left(| w_{f;g}-\sum_{j \in J'} w_{f;g,j}|^{1/2}\right) \le M\left(| w_{f;g}-\sum_{j \in J'} w_{f;g,j}|\right)^{1/2} ~,\] hence part (b) follows in the same way as part (a). 
\end{proof}

\begin{proposition} \label{lem:Four-coeff-wf}
Assume that we are given lattices $(\Gamma_j)_{j \in J}$, and
generating systems $(g_j)_{j \in J}$ and $(h_j)_{j \in J}$ that
fulfill the dual $1$-UCP.  Then, for all $f \in \cD$ and  all
$\alpha \in \ghat$, we have
   \begin{equation}     \label{eq:Fourier-coeff-wf} 
   \widehat{w_{f;g,h}}(\alpha) = \sum_{j \in J}
   \widehat{w_{f;g,h,j}}(\alpha) 
  \end{equation}
  with absolute convergence. Hence, the
generalized Fourier coefficients of $w_{f;g,h} = \sum_{\alpha \in \ghat}
c_\alpha \,
\alpha$  are  given by
\begin{equation}
  \label{eq:gen-Fourier-coeff-wf}
c_\alpha \equiv \widehat{w_{f;g,h}}(\alpha) = 
\begin{cases}
  \sum_{j \in J: \alpha \in \Gamma_{j}^\bot} d_{j,\alpha}  &
  \text{for }\alpha \in \bigcup_{j \in J} \Gamma_j^\bot, \\ 
 0 & \text{otherwise}.
\end{cases}
\end{equation}
\end{proposition}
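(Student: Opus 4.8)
The plan is to compute the Fourier coefficient $\widehat{w_{f;g,h}}(\alpha) = \mean{w_{f;g,h}\,\overline{\alpha}}$ by interchanging the mean with the summation over $j$, using the fact that the dual $1$-UCP provides exactly the $\mean{\abs{\cdot}}$-convergence required to justify this interchange. Since the hypothesis $w_{f;g,h} \in \AP(G)$ is built into the dual $1$-UCP, the left-hand coefficient is well-defined to begin with, and the whole argument takes place inside $\AP(G)$.

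First I would record the Fourier coefficients of the individual summands. By Lemma~\ref{lem:wfj-convergence}, $w_{f;g,h,j}(x) = \sum_{\beta \in \Gamma_j^\bot} d_{j,\beta}\langle \beta,x\rangle$ is a finite linear combination of characters. Since $\set{\alpha}_{\alpha \in \ghat}$ is orthonormal for the inner product $\innerprod[AP]{\cdot}{\cdot} = \mean{\cdot\,\overline{\cdot}}$, we have $\mean{\beta\,\overline{\alpha}} = \delta_{\alpha,\beta}$, so linearity of the mean over the finite sum yields
\[
 \widehat{w_{f;g,h,j}}(\alpha) = \sum_{\beta \in \Gamma_j^\bot} d_{j,\beta}\,\mean{\beta\,\overline{\alpha}} = \begin{cases} d_{j,\alpha} & \alpha \in \Gamma_j^\bot, \\ 0 & \text{otherwise.} \end{cases}
\]

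The key observation is that the coefficient functional $F \mapsto \hat{F}(\alpha) = \mean{F\,\overline{\alpha}}$ is bounded on $\AP(G)$ with respect to the Besicovitch norm $\mean{\abs{\cdot}}$: since $\abs{\overline{\alpha}} \equiv 1$ and $\abs{\mean{H}} \le \mean{\abs{H}}$ (immediate from the integral representation $\mean{H} = \int_{G_B} H_B$ in Theorem~\ref{thm_mean}(b)), one gets $\abs{\hat{F}(\alpha)} \le \mean{\abs{F}}$. Now fix $f \in \cD$ and $\alpha \in \ghat$, and let $\epsilon>0$. The dual $1$-UCP supplies a finite $J' \subset J$ so that $\mean{\absbig{w_{f;g,h} - \sum_{j\in J''} w_{f;g,h,j}}} < \epsilon$ for every finite $J'' \supseteq J'$. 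Applying the bounded functional together with its linearity over the finite sum gives
\[
 \absBig{\widehat{w_{f;g,h}}(\alpha) - \sum_{j\in J''} \widehat{w_{f;g,h,j}}(\alpha)} \le \mean{\absBig{w_{f;g,h} - \sum_{j\in J''} w_{f;g,h,j}}} < \epsilon,
\]
which is precisely unconditional convergence of the scalar series $\sum_{j} \widehat{w_{f;g,h,j}}(\alpha)$ to $\widehat{w_{f;g,h}}(\alpha)$, establishing \eqref{eq:Fourier-coeff-wf}. As an unconditionally convergent series of scalars is absolutely convergent, the convergence is absolute as asserted.

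Finally, substituting the per-$j$ coefficients from the first step converts \eqref{eq:Fourier-coeff-wf} into $c_\alpha = \sum_{j : \alpha \in \Gamma_j^\bot} d_{j,\alpha}$ when $\alpha \in \bigcup_j \Gamma_j^\bot$, and $c_\alpha = 0$ when $\alpha$ lies in no $\Gamma_j^\bot$, which is exactly \eqref{eq:gen-Fourier-coeff-wf}. I expect the only genuinely delicate point to be the $\mean{\abs{\cdot}}$-boundedness of the coefficient functional, i.e.\ the inequality $\abs{\mean{H}} \le \mean{\abs{H}}$; once that is in hand, the rest is linearity and a direct transfer of the UCP estimate through the functional.
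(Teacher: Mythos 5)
Your proposal is correct and follows essentially the same route as the paper's (very terse) proof: both arguments rest on the fact that the Fourier-coefficient functional $F \mapsto \mean{F\,\overline{\alpha}}$ is bounded with respect to the Besicovitch $1$-norm, so the dual $1$-UCP convergence transfers directly to the coefficients, with the per-$j$ coefficients read off from Lemma~\ref{lem:wfj-convergence}. Your version merely makes explicit the steps the paper leaves implicit (the bound $\abs{\mean{H}} \le \mean{\abs{H}}$ and the passage from unconditional to absolute convergence for scalar series), and these are all sound.
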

\begin{proof}
Recall that $1$-UCP entails $L^1$-convergence of the Bohr extensions,
and that the Fourier coefficients of any function in
$\AP(G)$ coincide with the coefficients of its Bohr
extension. Since Fourier coefficients are continuous linear
functionals with respect to the $1$-norms, equation
(\ref{eq:Fourier-coeff-wf}) follows. The last statement of the theorem
is just a reformulation of \eqref{eq:Fourier-coeff-wf} using Lemma~\ref{lem:wfj-convergence}.
\end{proof}

\begin{remark}
  \label{rem:w_f-under-infty-UCP}
  Under the dual $\infty$-UCP assumption in
  Proposition~\ref{lem:Four-coeff-wf} in place for the dual $1$-UCP,
  the conclusions of Proposition~\ref{lem:Four-coeff-wf} still hold
  true, in particular, that $w_{f;g,h}$ is a continuous and almost
  periodic function that agrees pointwise with its generalized Fourier
  series. 
\end{remark}

The importance of the function $w_{f;g,h}$ and its generalized Fourier
series in
Proposition~\ref{lem:Four-coeff-wf} is that it encodes frame
theoretical properties of GSI systems. E.g., under the UCP assumption,
a GSI system \gsi\ is a
frame with optimal bounds $A^\dagger$ and $B^\dagger$ if and only if 
 \begin{equation}
  \label{eq:wf-opt-bounds-B}
   B^\dagger = \sup_{f \in \cD,\norm{f}=1} w_{f;g}(0) =
   \sup_{f \in \cD,\norm{f}=1} \max_{x \in G} \sum_{j \in J }w_{f;g,j}(x) < \infty  
\end{equation}
and 
 \begin{equation}
  \label{eq:wf-opt-bounds-A}
   A^\dagger = \inf_{f \in \cD,\norm{f}=1} w_{f;g}(0) =
   \inf_{f \in \cD,\norm{f}=1} \min_{x \in G} \sum_{j \in J } w_{f;g,j}(x)   > 0.
\end{equation}
Fourier analysis of $w_{f;g}$ was recently employed in
\eqref{eq:wf-opt-bounds-B} and \eqref{eq:wf-opt-bounds-A} to obtain
new sufficiently conditions for the frame property of GSI systems \cite{1704.06510}.
Moreover, whenever the mixed frame operator $S_{g,h}=\sum_{j,\gamma}
\langle \cdot, T_\gamma g_j \rangle T_\gamma h_j$ on $L^2(G)$ of \gsi\
and $\gsi[h]$ is well-defined, but not
necessarily bounded, the discussions in the next section
yield the representation, under appropriate convergence assumptions,
\[ 
  \innerprodbig{\widehat{S_{g,h}f}}{\hat{f}} = \sum_{\alpha \in \bigcup_{j \in
    J} \Gamma_j^\perp} \innerprodbig{T_{\alpha}M_{t_\alpha}
\hat{f}}{\hat{f}} \quad  \text{for each $f \in \cD$},
\]
where $M_{t_\alpha}$ denotes the multiplication operator by $t_\alpha = \sum_{j\in J \, : \, \alpha\in\LG_{\! j}^{\perp}} 
 \frac{1}{\covol(\Gamma_j)} \,  \overline{\hat{g}_j(\cdot)}
 \hat{h}_j(\cdot+\alpha )$.

\subsection{Characterizing equations for dual and tight GSI frames}
\label{sec:char-equat-dual}

The following theorem exploits the fact that convergence of the sum
\begin{equation}
 \label{sum:wf}
 w_{f;g,h} = \sum_{j \in J} w_{f;g,h,j}
\end{equation}
in the proper sense results in expressions for the Fourier coefficients
\[
   d_{j,\alpha} = \frac{1}{\covol(\Gamma_j)}\int_{\ghat} \hat{f}(\omega) \overline{\hat{g}_j(\omega) \hat{f}(\omega+\alpha)} \hat{h}_j(\omega+\alpha) d\omega~.
 \]

\begin{theorem} \label{thm:meta_convergence_AP} 
Suppose that \gsi and \gsi[h] are Bessel families fulfilling the dual $1$-UCP. Then the following are equivalent: 
\begin{enumerate}[(i)]
 \item \gsi and \gsi[h] are dual frames for $L^2(G)$, \label{item:dual-char-1}
 \item for each $\alpha \in \bigcup_{j\in J} \LG_{\! j}^{\perp}$ we have
\begin{equation}
 \label{eq:t-alpha} t_{\alpha}(\omega) := \sum_{j\in J \, : \, \alpha\in\LG_{\! j}^{\perp}} 
 \frac{1}{\covol(\Gamma_j)} \,  \overline{\hat{g}_j(\omega)} \hat{h}_j(\omega+\alpha )  =
\delta_{\alpha,0} \quad a.e.\ \omega \in \ghat 
\end{equation}
with absolute convergence. \label{item:dual-char-2}
\end{enumerate}
\end{theorem}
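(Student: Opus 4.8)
The plan is to translate the dual-frame condition (\ref{item:dual-char-1}) into a statement about the almost periodic function $w_{f;g,h}$ and then read off (\ref{item:dual-char-2}) from its generalized Fourier coefficients via Proposition~\ref{lem:Four-coeff-wf}. First I would record the basic identity $w_{f;g,h}(x) = \innerprod{S_{g,h}T_x f}{T_x f}$, where $S_{g,h}$ is the mixed frame operator (bounded, by the Bessel assumption); in particular $w_{f;g,h}(0) = \innerprod{S_{g,h}f}{f}$. Since $\cD$ is translation invariant and $w_{T_x f;g,h}(0) = w_{f;g,h}(x)$, the dual-frame identity $S_{g,h} = I$ is equivalent, by density of $\cD$ and polarization over $\C$, to the requirement that for every $f \in \cD$ the function $w_{f;g,h}$ be the \emph{constant} $\norm{f}^2$. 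Thus (\ref{item:dual-char-1}) holds if and only if, for all $f \in \cD$, the generalized Fourier coefficients $c_\alpha$ of $w_{f;g,h}$ satisfy $c_0 = \norm{f}^2$ and $c_\alpha = 0$ for $\alpha \neq 0$; this reduction is exactly Fourier uniqueness for $\AP(G)$.

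Next I would compute $c_\alpha$ in terms of $t_\alpha$. By Proposition~\ref{lem:Four-coeff-wf} we have $c_\alpha = \sum_{j : \alpha \in \Gamma_j^{\perp}} d_{j,\alpha}$ with absolute convergence, each $d_{j,\alpha}$ being the integral from Lemma~\ref{lem:wfj-convergence}. Interchanging the sum over $j$ with the integral — justified by Tonelli together with the Calder\'on bound of Lemma~\ref{lem:calderon_upper}, since a pointwise Cauchy--Schwarz step bounds $\sum_j \covol(\Gamma_j)^{-1} \abs{\hat g_j(\omega)}\,\abs{\hat h_j(\omega+\alpha)}$ by $(B_g B_h)^{1/2}$ almost everywhere — yields
\[
 c_\alpha = \int_{\ghat} \hat f(\omega)\,\overline{\hat f(\omega+\alpha)}\, t_\alpha(\omega)\,d\omega, \qquad \alpha \in \bigcup_{j} \Gamma_j^{\perp}.
\]
The same Cauchy--Schwarz estimate shows that the defining series for $t_\alpha$ converges absolutely almost everywhere, which establishes the absolute-convergence clause of (\ref{item:dual-char-2}) under the standing Bessel hypothesis.

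For the implication (\ref{item:dual-char-2})$\Rightarrow$(\ref{item:dual-char-1}) I would simply substitute $t_\alpha = \delta_{\alpha,0}$: then $c_0 = \int \abs{\hat f}^2\,d\omega = \norm{f}^2$ by Plancherel, while $c_\alpha = 0$ for $\alpha \neq 0$, so $w_{f;g,h} \equiv \norm{f}^2$ and (\ref{item:dual-char-1}) follows from the reduction above. The converse (\ref{item:dual-char-1})$\Rightarrow$(\ref{item:dual-char-2}) is where the work lies: from $c_0 = \norm{f}^2$ for all $f \in \cD$ one gets $\int \abs{\hat f}^2 (t_0 - 1)\,d\omega = 0$, and testing with $\hat f = \mathbf{1}_U$ for small $U \subset \ghat \setminus E$ forces $t_0 = 1$ a.e.; whereas $c_\alpha = 0$ for $\alpha \neq 0$ must be turned into the pointwise statement $t_\alpha = 0$ a.e.

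The main obstacle is this last step, extracting an almost-everywhere identity from the vanishing of all the bilinear integrals $\int \hat f(\omega)\overline{\hat f(\omega+\alpha)}\,t_\alpha(\omega)\,d\omega$. I would use two-bump test functions $\hat f = \mathbf{1}_U + \lambda\,\mathbf{1}_{U+\alpha}$, with $U$ a small neighborhood of a Lebesgue point $\omega_0$ chosen inside $\ghat \setminus (E \cup (E-\alpha))$ (a co-null set, so almost every $\omega_0$ qualifies) and $\lambda \in \C$ a free parameter; such $\hat f$ lie in $\cD$. For $\alpha$ of infinite order the translates $U, U\pm\alpha, U+2\alpha$ are pairwise disjoint once $U$ is small, the cross terms drop out, and the integral reduces to $\bar\lambda \int_U t_\alpha\,d\omega$, giving $\int_U t_\alpha = 0$. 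For $\alpha$ of finite order some of these translates coincide and an extra term survives; here varying $\lambda$ over $\{1, i\}$ and combining the two resulting equations isolates $\int_U t_\alpha$ again. Letting $U$ shrink to $\omega_0$ then yields $t_\alpha(\omega_0) = 0$ for a.e.\ $\omega_0$, completing (\ref{item:dual-char-2}).
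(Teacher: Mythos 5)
Your proposal is correct and follows essentially the same route as the paper: reduce the dual-frame property to $w_{f;g,h}\equiv\norm{f}^2$ for $f\in\cD$, apply Fourier uniqueness for $\AP(G)$ together with Proposition~\ref{lem:Four-coeff-wf} to identify the generalized Fourier coefficients with $\int\hat f(\omega)\overline{\hat f(\omega+\alpha)}t_\alpha(\omega)\,d\omega$, and obtain absolute convergence of $t_\alpha$ from Cauchy--Schwarz and Lemma~\ref{lem:calderon_upper}. The only divergence is in the final extraction of the pointwise identity $t_\alpha=\delta_{\alpha,0}$: the paper passes to the bounded operator $M_{\overline{t_\alpha}}T_{-\alpha}$ and uses density of $\cD$ with complex polarization, whereas you use explicit two-bump test functions and a shrinking-neighbourhood argument (with the correct case distinction for torsion $\alpha$); both are valid ways to finish that step.
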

\begin{proof}
\eqref{item:dual-char-1} $\Rightarrow$ \eqref{item:dual-char-2}:
Fix $f \in \cD$. The dual frame assumption yields $w_{f;g,h} \equiv \norm{f}^2$, which entails by Fourier uniqueness that,
for all $\alpha \in \ghat$, 
\[
\norm{f}^2 \delta_{0,\alpha}= \langle w_{f;g,h},\alpha  \rangle_{AP} = 
 \sum_{j \in J: \alpha \in \Gamma_{j}^\bot} d_{j,\alpha} ~,
\] 
with unconditional convergence, where we have used Proposition~\ref{lem:Four-coeff-wf}.

For the derivation of \eqref{eq:t-alpha} from this fact, we adopt the strategy used in the proof of \cite[Theorem 3.4]{JakobsenReproducing2014}.
Fix $\alpha \in \bigcup_{j \in J} \Gamma_{j}^\bot$, and define
\[
 t_\alpha(\omega) =  \sum_{j \in J: \alpha \in \Gamma_j^\bot}  \frac{1}{\covol(\Gamma_j)}  \overline{\hat{g}_j(\omega)} \hat{h}_j(\omega + \alpha).
\]
Note that the right-hand side actually converges absolutely by the
following chain of inequalities:
\begin{multline}\label{eq:t-alpha-abs-conv}
\sum_{j \in J: \alpha \in \Gamma_j^\bot}
  \frac{1}{\covol(\Gamma_j)} \left| \overline{\hat{g}_j(\omega)} \hat{h}_j(\omega + \alpha) \right| 
 \le  \sum_{j \in J}  \frac{1}{\covol(\Gamma_j)} \left| \overline{\hat{g}_j(\omega)} \hat{h}_j(\omega + \alpha) \right|\\
 \le  \left( \sum_{j \in J} \frac{1}{\covol(\Gamma_j)} \left| {\hat{g}_j(\omega)} \right|^2 \right)^{1/2} 
\left( \sum_{j \in J} \frac{1}{\covol(\Gamma_j)} \left| {\hat{h}_j(\omega+\alpha)} \right|^2 \right)^{1/2}  \le  B_g^{1/2} B_h^{1/2},
\end{multline}
where the last inequality is due to Lemma~\ref{lem:calderon_upper},
and $B_g$ and $B_h$ are the Bessel constants associated to  $(g_j)_{j
  \in J}$ and $(h_j)_{j \in J}$, respectively. 
This shows also that 
\[
 \sum_{j \in J: \alpha \in \Gamma_j^\bot} \int_{\ghat} \left| \hat{f}(\omega) \overline{\hat{g}_j(\omega)} \overline{\hat{f}(\omega+\alpha)} \hat{h}_j(\omega + \alpha)\right| d\omega < \infty .
\]
Hence the multiplication operator $M_{\overline{t_\alpha}} : L^2(\widehat{G}) \to L^2(\widehat{G})$, $f  \mapsto f \cdot \overline{t_\alpha}$ is well-defined and bounded. For all $f \in \cD$ we have the relation
\[
 \langle \hat{f}, M_{\overline{t_\alpha}} T_{-\alpha} \hat{f}
 \rangle = \sum_{j \in J: \alpha \in \Gamma_{j}^\bot} d_{j,\alpha}  = \delta_{0,\alpha}\innerprods{\hat{f}}{\hat{f}}~,
\] and since $\cD$ is dense, this implies
\[ 
  M_{\overline{t_\alpha}} T_{-\alpha} =
  \begin{cases}
    I & \alpha =0, \\
    0 & \alpha \notin \bigcup_{j\in J} \LG_{\! j}^{\perp} \setminus \{0\},
  \end{cases}
\]
which in turn yields \eqref{eq:t-alpha}. 

\eqref{item:dual-char-2} $\Rightarrow$ \eqref{item:dual-char-1}: We have
\[
 \sum_{j \in J: \alpha \in \Gamma_{j}^\bot} d_{j,\alpha}  = \langle \hat{f}, M_{\overline{t_\alpha}} T_{-\alpha} \hat{f}
 \rangle =  \delta_{0,\alpha}\innerprods{\hat{f}}{\hat{f}}~,
\]  for all $f \in \cD$, with the last equation provided by the assumption on the $t_\alpha$. Hence, by Proposition \ref{lem:Four-coeff-wf},
the Fourier coefficients of $w_{f;g,h} \in \AP(G)$ coincide with the Fourier coefficients of the constant function. Hence the Bohr extension of $w_{f;g,h}$ is constant, and consequently, so is $w_{f;g,h}$. 
\end{proof}

\begin{remark}  \label{rem:ucp_vs_lic}
 Theorem~\ref{thm:meta_convergence_AP} is indeed a generalization of \cite[Theorem
 3.4]{JakobsenReproducing2014}. The cited result is established under
 the assumption that the so-called \emph{dual $\alpha$-local
   integrability condition (dual $\alpha$-LIC)} holds. This condition involves the coefficients
 $(c_{j,\alpha})_{j,\alpha}$ associated to each $f \in \cD$ via 
 \[
 c_{j,\alpha} =   \frac{1}{\covol(\Gamma_j)}\int_{\ghat} \left| \hat{f}(\omega) \hat{g}_j(\omega) \hat{f}(\omega+\alpha) \hat{h}_j(\omega+\alpha) \right| d\omega.
 \] 
A comparison with the definition of $d_{j,\alpha}$ shows that $c_{j,\alpha}$ is obtained by taking the absolute value of the integrand in the definition of $d_{j,\alpha}$, in particular, 
 \[
  |d_{j,\alpha}| \le c_{j,\alpha}~.
 \] 
Now the dual $\alpha$-LIC  amounts to the requirement that
$\sum_{j,\alpha} \abs{c_{j,\alpha}} < \infty$ for every $f \in \cD$.
Since characters $\alpha \in \ghat$ are bounded, the $\alpha$-LIC implies in fact that
 \[
  w_{f;g,h} = \sum_{j \in J} \sum_{\alpha \in
    \Gamma_j^\bot} d_{j,\alpha} \, \alpha~,
 \] with the right hand side converging \emph{unconditionally and
   uniformly}. Thus dual $\infty$-UCP is guaranteed, and we may apply Theorem~\ref{thm:meta_convergence_AP}  to recover \cite[Theorem 3.4]{JakobsenReproducing2014}. 

 Note also that this argument goes through under the assumption that
 $\sum_{j,\alpha} |d_{j,\alpha}| < \infty$. This condition could be
 strictly weaker than the dual $\alpha$-LIC because of possible cancellations inside the integrals defining the $d_{j,\alpha}$ that could entail $|d_{j,\alpha}| \ll c_{j,\alpha}$. However, we are not aware of examples where this observation pays off. 
Finally, we remark that the classical LIC \cite{MR1916862,MR2283810} amounts to the requirement that
$\sum_{j,\alpha} \abs{\tilde{c}_{j,\alpha}} < \infty$ for every $f \in
\cD$, where 
 \[
 \tilde{c}_{j,\alpha} =   \frac{1}{\covol(\Gamma_j)}\int_{\ghat}
 \absbig{ \hat{f}(\omega) \hat{f}(\omega+\alpha)} \abs{\hat{g}_j(\omega)}^2  d\omega, 
 \] 
which implies the $\alpha$-LIC and therefore also the $\infty$-UCP.
\end{remark}

In the case of tight frames, the Bessel assumption in
Theorem~\ref{thm:meta_convergence_AP} is not necessary.
\begin{theorem} \label{thm:meta_convergence_AP_tight} 
Suppose that \gsi\ fulfills the $1$-UCP. Then the following are equivalent: 
\begin{enumerate}[(i)]
 \item \gsi\ is a Parseval frames for $L^2(G)$, \label{item:char-parseval-1}
 \item for each $\alpha \in \bigcup_{j\in J} \LG_{\! j}^{\perp}$ we
   have 
\begin{equation}
 \label{eq:t-alpha_tight} \sum_{j\in J \, : \, \alpha\in\LG_{\! j}^{\perp}} 
 \frac{1}{\covol(\Gamma_j)} \,  \overline{\hat{g}_j(\omega)} \hat{g}_j(\omega+\alpha )  =
\delta_{\alpha,0} \quad a.e.\ \omega \in \ghat .
\end{equation} \label{item:char-parseval-2}
\end{enumerate}
\end{theorem}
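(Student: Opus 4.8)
The forward implication \eqref{item:char-parseval-1} $\Rightarrow$ \eqref{item:char-parseval-2} needs nothing new: a Parseval frame is a Bessel family with bound $1$, so setting $h_j = g_j$ makes both hypotheses of Theorem~\ref{thm:meta_convergence_AP} hold (the Bessel property automatically, the dual $1$-UCP by assumption), and its conclusion \eqref{eq:t-alpha} is exactly \eqref{eq:t-alpha_tight}. The substance of the theorem therefore lies in the converse, where the Bessel hypothesis is dropped. The plan is to manufacture, out of \eqref{eq:t-alpha_tight} alone, precisely the Calder\'on-sum bound that Lemma~\ref{lem:calderon_upper} would otherwise have supplied.

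So for the converse I would assume \eqref{eq:t-alpha_tight} and first read off its $\alpha = 0$ instance, namely $\sum_{j \in J} \frac{1}{\covol(\Gamma_j)} |\hat{g}_j(\omega)|^2 = 1$ for a.e.\ $\omega$. As the summands are nonnegative this convergence is unconditional and holds with no extra hypothesis, so the Calder\'on sum is bounded by $1$. Feeding this into the Cauchy--Schwarz computation \eqref{eq:t-alpha-abs-conv}, but now sourcing the final bound from the displayed identity rather than from Lemma~\ref{lem:calderon_upper}, shows that for every $\alpha$ the series defining $t_\alpha$ converges absolutely with $\sum_{j : \alpha \in \Gamma_j^\perp} \frac{1}{\covol(\Gamma_j)} |\hat{g}_j(\omega)\hat{g}_j(\omega+\alpha)| \le 1$ a.e. This estimate is the exact replacement for the Bessel bound.

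Next I would fix $f \in \cD$ and $\alpha \in \bigcup_{j} \Gamma_j^\perp$ and evaluate $\sum_{j : \alpha \in \Gamma_j^\perp} d_{j,\alpha}$. Since $\hat{f} \in L^\infty$ is compactly supported and the pointwise bound $t_\alpha \le 1$ dominates the sum of the integrands, Fubini's theorem justifies swapping summation and integration, yielding $\sum_{j : \alpha \in \Gamma_j^\perp} d_{j,\alpha} = \int_{\ghat} \hat{f}(\omega)\,\overline{\hat{f}(\omega+\alpha)}\,t_\alpha(\omega)\,d\omega$. Substituting $t_\alpha = \delta_{\alpha,0}$ from \eqref{eq:t-alpha_tight} collapses this to $\delta_{\alpha,0}\norm{f}^2$. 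By Proposition~\ref{lem:Four-coeff-wf} — which requires only the $1$-UCP, assumed throughout — these numbers are the generalized Fourier coefficients of $w_{f;g} \in \AP(G)$, so they agree with those of the constant function $\norm{f}^2$; Fourier uniqueness then forces $w_{f;g} \equiv \norm{f}^2$. Evaluating at $x = 0$ gives $\sum_{j,\gamma} |\langle f, T_\gamma g_j\rangle|^2 = \norm{f}^2$ for all $f \in \cD$, and since $\cD$ is dense the analysis operator is a densely defined isometry, hence extends to an isometry on $L^2(G)$, so that \gsi\ is a Parseval frame (cf.\ \eqref{eq:wf-opt-bounds-A} and \eqref{eq:wf-opt-bounds-B}).

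The one delicate point is this interchange of summation and integration with no Bessel hypothesis in hand; I expect it to be resolved entirely by the observation that the $\alpha = 0$ case of \eqref{eq:t-alpha_tight} already encodes the Calder\'on-sum bound, after which the remainder is a verbatim transcription of the proof of Theorem~\ref{thm:meta_convergence_AP} with $B_g = B_h = 1$.
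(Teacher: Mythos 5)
Your proposal is correct and follows essentially the same route as the paper: the forward direction is Theorem~\ref{thm:meta_convergence_AP} with $h_j=g_j$, and for the converse the key observation in both arguments is that the $\alpha=0$ instance of \eqref{eq:t-alpha_tight} supplies the Calder\'on-sum bound $\sum_{j}\frac{1}{\covol(\Gamma_j)}\abssmall{\hat{g}_j(\omega)}^2=1$, which is then fed into the Cauchy--Schwarz estimate \eqref{eq:t-alpha-abs-conv} in place of Lemma~\ref{lem:calderon_upper}, after which the argument of Theorem~\ref{thm:meta_convergence_AP} goes through verbatim. You merely spell out the Fubini and density steps that the paper leaves implicit.
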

\begin{proof}
  We claim that if either \eqref{item:char-parseval-1}  or
  \eqref{item:char-parseval-2} holds, the convergence in
  \eqref{eq:t-alpha_tight} is absolute. The proof of this claim for 
  \eqref{item:char-parseval-1} is
  clear from the chain of inequalities \eqref{eq:t-alpha-abs-conv}
  with $h_j=g_j$, $j \in J$, since
  \gsi\ is a Bessel system by assumption. On the other hand, if
  \eqref{item:char-parseval-2} holds, then for $\alpha=0$, we have
  $\sum_{j \in J}\frac{1}{\covol(\Gamma_j)} \,  \abs{\hat{g}_j(\omega)}^2=1$. By
  computations as in \eqref{eq:t-alpha-abs-conv}, this proves the
  claim.

  The rest of the proof follows the proof of Theorem~\ref{thm:meta_convergence_AP}.
\end{proof}

 The following example was discovered by Bownik and
 Rzeszotnik~\cite{MR2066821} to show that the Calder\'on sum for
 Parseval GSI frames is not necessarily equal to one.
 The example was also the first
 construction to show that the
 $t_\alpha$-equations \eqref{eq:t-alpha_tight} do not characterize Parseval GSI frames without
 some regularity assumption on $\Gamma_j$ and $g_j$, e.g., the LIC. In the context
 of this paper, the example serves as an illustration 
 that $1$-UCP allows finer distinctions than LIC. It is constructed in $\ell^2(\mathbb{Z})$, but can be easily transferred to $L^2(\mathbb{R})$. 
 
\begin{example} \label{ex:UCP_LIC}
  Let $G = \Z$ and, for each $N=2,3,\dots$, write $\Z$ as a disjoint
  union:
\[
 \Z = \bigcupdot_{j \in \N} \tau_j + N^j \Z, 
\]
where $\tau_1=0$ and $\tau_j$, $j \ge 2$, are chosen inductively as the smallest $t \in \Z$ in absolute value
satisfying 
\[
\left(\bigcupdot_{i=1}^{j-1} \left(\tau_j + N^j \Z \right)\right) \cap \left(t +
  N^j \Z\right) = \emptyset.
\]
It case $t$ and $-t$ both are minimizers, we pick $\tau_j$ to be
positive.

For $j\in J = \N$, let $\Gamma_j=N^j\Z$ and $g_j=\delta_{\tau_j}$, where
$\delta_k$ denotes the sequence with $\delta_k(k)=1$ and
$\delta_k(\ell)=0$ for $\ell\neq k \in \Z$. The GSI system $\gsi$ is
an orthonormal basis for $\ell^2(\Z)$ since it is a reordering
of the canonical orthonormal basis $\seq{\delta_k}_{k \in \Z}$. Bownik and Rzeszotnik~\cite{MR2066821} show that
$t_0(\omega)= \frac{1}{N-1}$ and that the GSI systems do not satisfy the
LIC. For $N\ge 3$ this shows that the local integrability condition of
\cite[Theorem 2.1]{MR1916862} cannot be removed. Kutyniok and Labate~\cite{MR2283810}
used the example with $N=2$ to show that Parseval frames need not
satisfy the LIC.

In \cite{JakobsenReproducing2014} it was noted that the
characterizing equations~\eqref{eq:t-alpha_tight} are satisfied for $N=2$; to be precise, the
example in \cite{JakobsenReproducing2014} is slightly different from
the present one, but
the verification of the characterizing equations for our example is very
similar, hence we leave out the details. Since GSI systems can satisfy
the characterizing equations, but not the local integrability conditions
nor the weaker $\alpha$-LIC, it leaves room for an improvement of the
results in both \cite{MR1916862} and
\cite{JakobsenReproducing2014}. Hence, none of the known results on
characterizing $t_\alpha$-equations can be applied for the case
$N=2$. However, we will now show that
Theorem~\ref{thm:meta_convergence_AP} indeed can capture this phenomenon: For $N=2$ the 1-UCP holds,
while it fails for $N\ge 3$. This is the desired conclusion
as only the case $N=2$ satisfies the characterizing equations.

A first indication of the striking difference between $N=2$ and $N\ge
3$ comes from the growth rate of $\tau_j$. For $N=2$ we find by
induction that $\tau_j=-\frac13(-2)^j+\frac13$, while
$\abs{\tau_j}$ grows linearly as $\tfrac{j}{2} \le \abs{\tau_j} \le j$
 for $N\ge 3$. 

Let $N \ge 2, N \in \N$ be given.
Since $\gsi$ is an orthonormal basis, we have that $w_{f;g}(x)=\sum_j w_{f; g, j}(x)=\norm{f}^2$ for
$a.e.\ x \in \Z$.  For simplicity, assume
$f \in \mathcal{D}_\emptyset$ is normalized, that is, $\norm{f}^2=1$
and $E=\emptyset$. For each $j \in \N$, 
\[
w_{f; g, j}(x)=  \sum_{\gamma \in \LG_j} \absBig{\innerprodbig{T_x f }{T_\gamma g_j}\,}^2 =  \sum_{\ell \in H_{x,j}}
\abs{f(\ell)}^2, 
\]
where $H_{x,j}=x + \tau_j + N^j \mathbb{Z}$. Hence, $w_{f;g,j}(x)$ is the value of the squared norm
  of the orthogonal projection of $T_xf$ onto
  $\overline{\Span\setprop{\delta_k}{k \in \LG_j}}$. 
  
We first prove that $\infty$-UCP is not satisfied for any choice of $N$. For this purpose, let $J' \subset N$ be any finite subset. 
Let $x \in \mathbb{Z}$ be such that $m-x \not\in \bigcup_{j \in J}  \tau_j + N^j \mathbb{Z}$; note that $x$ exists by construction of the $\tau_j$.
It then follows that $\sum_{j \in J} w_{f;g,j} (x) \le 1-|f(m)|^2$, or 
\[
 w_{f;g}(x) - \sum_{j \in J}w_{f;g,j}(x) \ge  |f(m)|^2~, 
\] and thus $\| w_{f;g}- \sum_{j \in J}w_{f;g,j} \|_\infty \ge
|f(n)|^2$. Hence, $\infty$-UCP does not hold for $E=\emptyset$. Note
that allowing a general blind spot set $E$, i.e., a closed subset of
$\ghat = \mathbb{T}$ of measure zero, does not change this conclusion
as $\cD$ is a non-trivial, translation-invariant subspace of $\ell^2(\Z)$.

Let us next consider $1$-UCP. Note that $w_{f;g}=1$ which is indeed
almost periodic. We first compute the mean of $w_{f;g,j}$. 
Note that $w_{f;g,j}$ is $N^j$-periodic, and that the mean of a periodic function is just the average over one period. Hence we get
\[
\mean{w_{f;g,j}} = \frac{1}{N^j}  \sum_{x = 0}^{N^{j}-1} w_{f;g,j}(x) =\frac{1}{N^j} \underbrace{\sum_{x=0}^{N^{j}-1} \sum_{m \in x+\tau_j + N^j \mathbb{Z}} |f(m)|^2}_{= \| f \|^2} \\
=  \frac{1}{N^j} ~.
\]
Using linearity of the mean, we get for any finite set $J' \subset \mathbb{N}$  
\begin{equation}
 \mean{1-\sum_{j \in J'} w_{f;g,j}} = 1 - \sum_{j \in J'} N^{-j}~.
\end{equation}
In particular, for $N=2$ convergence of the geometric series yields 
\[ \mean{1-\sum_{j \in J} w_{f;g,j} } \to 0 \] unconditionally, and
thus $1$-UCP holds. For $N>2$, however, 
\[ \mean{1-\sum_{j \in J} w_{f;g,j}} \ge 1- \sum_{j \in \N} N^{-j}  = \frac{N-2}{N-1} \]
shows that $1$-UCP is violated. 

  \end{example}

\subsection{Necessary conditions for the frame property}
\label{sec:necess-cond}

From the characterizing equations in Theorem~\ref{thm:meta_convergence_AP} we can derive a necessary condition
for the frame property of a GSI system in terms of the Calder\'on
sum. The condition can be seen as a quantitative version of the fact
that the Fourier supports of the generators need to cover $\ghat$.  

\begin{theorem} \label{thm:a-lic-calderon-bounds} Suppose that \gsi\ is a frame for
  $L^2(G)$ satisfying the 
  $1$-UCP. Then 
  \begin{equation}
    \label{eq:calderon-bounded-below}
    A_g \le \sum_{j\in J}  \frac{1}{\covol(\Gamma_{\!j})} \abs{\hat{g}_j(\omega)}^2   \quad \text{for a.e. }  \omega \in \ghat.
  \end{equation}
\end{theorem}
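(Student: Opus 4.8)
The plan is to compare two expressions for the mean $\mean{w_{f;g}}$: one coming from the frame lower bound, which controls it from below by $A_g \norm{f}^2$, and one coming from the generalized Fourier analysis of Proposition~\ref{lem:Four-coeff-wf}, which identifies it with the spectral integral $\int_{\ghat} \abs{\hat f(\omega)}^2 t_0(\omega)\, d\omega$. Comparing the two for a rich enough supply of test functions $f \in \cD$ will then force the claimed pointwise bound.

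First I would record the pointwise lower bound on $w_{f;g}$. For any $f \in \cD$ and any $x \in G$, the quantity $w_{f;g}(x) = \sum_{j,\gamma} \abs{\innerprod{T_x f}{T_\gamma g_j}}^2$ is exactly the frame quadratic form evaluated at $T_x f$. Since the translation $T_x$ is unitary, $\norm{T_x f} = \norm{f}$, so the lower frame inequality \eqref{eq:abstract-frame-lower} gives $w_{f;g}(x) \ge A_g \norm{f}^2$ for \emph{every} $x \in G$. Because the $1$-UCP guarantees $w_{f;g} \in \AP(G)$, its mean is defined; being a limit of averages of a function bounded below by the constant $A_g \norm{f}^2$, it satisfies $\mean{w_{f;g}} \ge A_g \norm{f}^2$.

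Next I would compute $\mean{w_{f;g}}$ spectrally. The mean equals the generalized Fourier coefficient at the trivial character $\alpha = 0$, so by Proposition~\ref{lem:Four-coeff-wf} (with $h_j = g_j$) we have $\mean{w_{f;g}} = \sum_{j \in J} d_{j,0}$. Evaluating the formula for $d_{j,\alpha}$ at $\alpha = 0$ gives $d_{j,0} = \frac{1}{\covol(\Gamma_j)} \int_{\ghat} \abs{\hat f(\omega)}^2 \abs{\hat g_j(\omega)}^2\, d\omega$, all terms of which are nonnegative. Interchanging sum and integral by Tonelli then yields
\[
\mean{w_{f;g}} = \int_{\ghat} \abs{\hat f(\omega)}^2 \Big( \sum_{j\in J} \tfrac{1}{\covol(\Gamma_j)} \abs{\hat g_j(\omega)}^2 \Big)\, d\omega .
\]
Combining this with the previous paragraph gives, for every $f \in \cD$,
\[
\int_{\ghat} \abs{\hat f(\omega)}^2 \Big( \sum_{j\in J} \tfrac{1}{\covol(\Gamma_j)} \abs{\hat g_j(\omega)}^2 - A_g \Big)\, d\omega \ge 0 .
\]

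The final, and main, step is to localize this integral inequality into the a.e.\ pointwise bound. I would argue by contradiction: writing $t_0(\omega) = \sum_{j} \frac{1}{\covol(\Gamma_j)} \abs{\hat g_j(\omega)}^2$ for the Calder\'on sum, suppose the set $\{ t_0 < A_g \}$ had positive measure. Since the blind spot $E$ has measure zero, $\{ t_0 \le A_g - \epsilon \} \setminus E$ would then have positive measure for some $\epsilon > 0$, and would contain a compact set $K$ of positive measure. Choosing $f \in \cD$ with $\hat f = \mathbf{1}_K$ (admissible since $\mathbf{1}_K$ is bounded and compactly supported away from $E$) makes the displayed integral equal to $\int_K (t_0 - A_g)\, d\omega \le -\epsilon\, \mu_{\ghat}(K) < 0$, a contradiction. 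I expect the only delicate points to be the bookkeeping around the blind spot $E$ and verifying that the test functions $\mathbf{1}_K$ genuinely lie in $\cD$; the positivity interchange and the density argument are otherwise routine.
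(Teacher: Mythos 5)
Your proposal is correct and follows essentially the same route as the paper's own proof: bound $\mean{w_{f;g}}$ from below by $A_g\norm{f}^2$ using the frame inequality and positivity of the mean, identify $\mean{w_{f;g}}$ with $\int_{\ghat}\abssmall{\hat f(\omega)}^2 t_0(\omega)\,d\omega$ via the zeroth Fourier coefficient from Proposition~\ref{lem:Four-coeff-wf}, and localize by testing against indicator functions. Your final step is in fact slightly more careful than the paper's (which simply takes $\hat f=\mathbf{1}_F$ on the exceptional set without checking membership in $\cD$), since you shrink to a compact subset avoiding the blind spot $E$.
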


  \begin{proof} 
Let $f\in \cD$.
By assumption, $w_{f;g} \in \AP(G)$, and its mean is equal to the constant term of
its Fourier series by Definition~\ref{def:UCP}, that is,
\[
 \mean{w_{f;g}} = 
\innerprod[AP]{w_{f;g}}{\mathbf{1}_{G}} = \sum_{j \in J} d_{j,0} = \int_{\ghat} \abssmall{\hat{f}(\omega)}^2\, t_{0}(\omega) \,
    d\omega, 
\]
where we have used~\eqref{eq:Fourier-coeff-wf}.

The frame inequality implies that 
$A_g\norm{f}^2 \le w_{f;g}(x)$ for all  $x \in G$. 
Since $w_{f;g} - A_g \norm{f}^2 \ge 0$ a.e., it follows that
\[
\mean{w_{f;g}}\ge \mean{A_g \norm{f}^2} = \mean{\mathbf{1}_{G}} A_g \norm{f}^2 = A_g
\norm{f}^2.
\]
Hence, we arrive at
 \begin{equation}
  A_g\norm{f}^2 \le \int_{\ghat} \abssmall{\hat{f}(\omega)}^2\, t_0(\omega) \,
      d\omega \label{eq:bound-of-t1}. 
 \end{equation}

This, in turn, implies that $A_g \le t_0(\omega)$ for a.e.\ $\omega \in
\ghat$. To see this, assume towards a contradiction that
$t_{0}(\omega)<A_g$ for $\omega \in F$, where $F$ is of positive
measure. Let $\hat{f}(\omega)=\mathbf{1}_{F}$. Then 
$ \int_{\ghat} \abssmall{\hat{f}(\omega)}^2\, t_0(\omega) \,
      d\omega = \int_{F} t_0(\omega) \,
      d\omega < A_g \norm{f}^2 $, which contradicts
      \eqref{eq:bound-of-t1}. 
  \end{proof}

  \begin{remark}
    Note that Theorem~\ref{thm:a-lic-calderon-bounds} is a
    generalization of results in \cite{MR3627474,MR1199539}. The argument
    in the proof of Theorem~\ref{thm:a-lic-calderon-bounds} can also be
    used to prove an upper bound, but this bound holds without any
    LIC/UCP assumptions by Lemma~\ref{lem:calderon_upper}. We refer to
    Section~\ref{sec:appl-extens-} for applications to wavelet systems
    and generalizations of Theorem~\ref{thm:a-lic-calderon-bounds}.
  \end{remark}

The following theorem substantiates the intuition on the role of
bandwidth for the existence of generators. It proves that infinite
bandwidth is necessary for the existence of frame generators $(g_j)_{j
\in J}$ in non-discrete spaces. 
\begin{theorem}
\label{thm:density-GTI}
Suppose that \gsi is a frame for
  $L^2(G)$ which satisfies the $1$-UCP. Then  
\[ 
BW(\mathcal{G}) \equiv \sum_{j\in J}  \frac{1}{\covol(\Gamma_{\!j})}
\ge 
\frac{A_g}{B_g} \, \mu_{\ghat}(\ghat).
\]
In particular, if $G$ is non-discrete, then $BW(\mathcal{G})=\infty$.
\end{theorem}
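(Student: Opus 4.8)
The plan is to combine the two-sided control of the Calderón sum with Plancherel's theorem, turning a pointwise lower bound into an integrated one. First I would invoke Theorem~\ref{thm:a-lic-calderon-bounds}, whose hypotheses (frame plus $1$-UCP) are exactly those assumed here, to obtain the pointwise lower bound
\[
 A_g \le \sum_{j \in J} \frac{1}{\covol(\Gamma_j)} \abssmall{\hat{g}_j(\omega)}^2 \quad \text{for a.e. } \omega \in \ghat.
\]
Integrating this inequality over $\ghat$ against $\mu_{\ghat}$ and interchanging sum and integral --- legitimate since every summand is nonnegative and measurable, so Tonelli's theorem applies --- together with the Plancherel identity $\int_{\ghat} \abssmall{\hat{g}_j(\omega)}^2 \, d\omega = \norm{g_j}^2$, yields
\[
 A_g \, \mu_{\ghat}(\ghat) \le \sum_{j \in J} \frac{\norm{g_j}^2}{\covol(\Gamma_j)}.
\]
I emphasize that this step is valid verbatim whether $\mu_{\ghat}(\ghat)$ is finite or infinite, precisely because all quantities involved are nonnegative; no truncation to finite-measure subsets is needed.

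The crucial remaining ingredient is the estimate $\norm{g_j}^2 \le B_g$ for every $j \in J$, which is what converts the weighted generator norms on the right into the bare bandwidth. To establish it, I would apply the Bessel inequality with upper frame bound $B_g$ to the test vector $f = g_j$ and retain only the single diagonal term indexed by $j' = j$ and $\gamma = 0 \in \Gamma_j$:
\[
 \norm{g_j}^4 = \abssmall{\innerprods{g_j}{g_j}}^2 \le \sum_{j' \in J} \sum_{\gamma \in \Gamma_{j'}} \abssmall{\innerprods{g_j}{T_\gamma g_{j'}}}^2 \le B_g \norm{g_j}^2,
\]
whence $\norm{g_j}^2 \le B_g$ (and trivially so when $g_j = 0$).

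Substituting this bound into the previous display gives
\[
 A_g \, \mu_{\ghat}(\ghat) \le B_g \sum_{j \in J} \frac{1}{\covol(\Gamma_j)} = B_g \, BW(\mathcal{G}),
\]
and dividing by $B_g > 0$ produces the asserted inequality. For the final assertion I would use that $G$ is non-discrete precisely when its dual $\ghat$ is non-compact, and that a non-compact LCA group carries infinite Haar measure; hence $\mu_{\ghat}(\ghat) = \infty$ forces $BW(\mathcal{G}) = \infty$.

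I do not anticipate a serious obstacle. The interchange of summation and integration and the handling of a possibly infinite $\mu_{\ghat}(\ghat)$ are routine once one observes that all terms are nonnegative. The only slightly non-obvious point is recognizing that the diagonal term of the Bessel inequality delivers $\norm{g_j}^2 \le B_g$; this is the hinge that removes the dependence on the individual generator norms and leaves exactly the bandwidth on the right-hand side.
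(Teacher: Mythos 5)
Your proposal is correct and follows essentially the same route as the paper: integrate the pointwise Calder\'on-sum lower bound from Theorem~\ref{thm:a-lic-calderon-bounds} over $\ghat$, apply Plancherel, and use the standard fact that every element of a Bessel family with bound $B_g$ satisfies $\norm{g_j}^2 \le B_g$ (which you prove directly via the diagonal term, whereas the paper simply cites it). No gaps.
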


\begin{proof}
  By integrating the lower bound in
  \eqref{eq:calderon-bounded-below} over $\ghat$, we obtain
 \begin{equation}
\int_{\ghat} A_g \, d\omega \le  \sum_{j\in J}
\frac{1}{\covol(\Gamma_{\!j})} \norm{\hat{g}_j}^2. 
\label{eq:int-over-lower}
\end{equation}
From frame theory we know that a countable Bessel family $(\eta_i)_{i \in I}$ with Bessel bound $B$ in a Hilbert
space satisfies the norm bound $\norm{\eta_i}^2 \le B$ for all $i
\in I$. In our
settings, using isometry of translations and the Plancherel theorem, this fact yields $\norm{\hat{g}_j}^2 \le B_g$ which, combined with
\eqref{eq:int-over-lower}, proves the
sought inequality.  
Finally, if $G$ is
  non-discrete, the dual group $\ghat$ is non-compact, hence $\int_{\ghat} A_g d\omega$
  is infinite. 
\end{proof}

The following result notes a further basic fact: Lattice systems that generate a frame must be infinite, if $G$ is non-discrete. 
\begin{corollary} \label{cor:finite_ls} Assume that $G$ is non-discrete. 
 Let $(\Gamma_j)_{j \in J}$ denote a finite system of lattices. Then, for every system of generators $(g_j)_{j \in J}$, the associated GSI system does not possess a lower frame bound. 
\end{corollary}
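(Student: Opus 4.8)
The plan is to argue by contradiction and to reduce to the Calder\'on-sum estimate underlying Theorems~\ref{thm:a-lic-calderon-bounds} and~\ref{thm:density-GTI}. The one point that needs care is that those theorems presuppose the \emph{full} frame property (in particular a Bessel bound), whereas here I must exclude a lower frame bound for \emph{arbitrary} generators, including non-Bessel ones. The observation that circumvents this is that for a finite lattice system the regularity hypotheses become automatic: by Lemma~\ref{lem:wfj-convergence} each $w_{f;g,j}$ is a trigonometric polynomial with only finitely many nonzero coefficients $d_{j,\alpha}$, and since $J$ is finite the sum $w_{f;g}=\sum_{j\in J}w_{f;g,j}$ is itself a genuine trigonometric polynomial, hence bounded and in $\AP(G)$, with all convergence in Definition~\ref{def:UCP} holding trivially. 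Thus the $1$-UCP (indeed the $\infty$-UCP) holds for every choice of $(g_j)_{j\in J}$, and no Bessel assumption enters the argument.

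So I would suppose, towards a contradiction, that the associated GSI system possesses a lower frame bound $A_g>0$, i.e.\ $\sum_{j\in J}\sum_{\gamma\in\Gamma_j}\abssmall{\innerprods{f}{T_\gamma g_j}}^2\ge A_g\norm{f}^2$ for all $f\in L^2(G)$. Fixing $f\in\cD$ and replacing $f$ by $T_x f$ (an isometry) gives $w_{f;g}(x)\ge A_g\norm{f}^2$ for every $x\in G$. Taking the mean of the nonnegative almost periodic function $w_{f;g}-A_g\norm{f}^2$, and using that the constant term of the Fourier series of $w_{f;g}$ is $\sum_{j\in J}d_{j,0}$ (Lemma~\ref{lem:wfj-convergence}), exactly as in the proof of Theorem~\ref{thm:a-lic-calderon-bounds}, yields
\[
A_g\norm{f}^2\le\mean{w_{f;g}}=\sum_{j\in J}d_{j,0}=\int_{\ghat}\abssmall{\hat f(\omega)}^2\,t_0(\omega)\,d\omega,\qquad t_0(\omega)=\sum_{j\in J}\frac{1}{\covol(\Gamma_j)}\abssmall{\hat g_j(\omega)}^2.
\]
Since this holds for all $f\in\cD$, the localization step from that same proof (testing against $\hat f=\mathbf{1}_F$ for precompact $F\subset\ghat\setminus E$ of positive measure, the null blind spot $E$ being harmless) produces $A_g\le t_0(\omega)$ for a.e.\ $\omega\in\ghat$.

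It then remains to integrate this pointwise bound over $\ghat$, as in the proof of Theorem~\ref{thm:density-GTI}. Because $J$ is finite and each $\hat g_j\in L^2(\ghat)$, the Calder\'on sum $t_0$ lies in $L^1(\ghat)$ with
\[
\int_{\ghat}t_0(\omega)\,d\omega=\sum_{j\in J}\frac{1}{\covol(\Gamma_j)}\norm{\hat g_j}^2<\infty.
\]
On the other hand, $G$ non-discrete means $\ghat$ is non-compact, so $\mu_{\ghat}(\ghat)=\infty$; integrating $A_g\le t_0$ therefore forces $A_g\,\mu_{\ghat}(\ghat)=\infty\le\int_{\ghat}t_0<\infty$, which is impossible unless $A_g=0$, contradicting $A_g>0$. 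The main obstacle is purely conceptual rather than computational: one must check that the lower-bound half of the Calder\'on estimate survives the removal of the Bessel hypothesis, and this is precisely what finiteness of $J$ guarantees, since it makes $w_{f;g}$ a genuine trigonometric polynomial with a well-defined mean regardless of the generators.
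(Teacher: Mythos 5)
Your proof is correct. The paper's own proof is much shorter: it observes that a finite lattice system trivially satisfies the $\infty$-UCP (hence the $1$-UCP), that $BW(\mathcal{G})$ is finite, and then simply invokes Theorem~\ref{thm:density-GTI} to conclude that the system cannot be a frame. You instead re-run the Calder\'on-sum argument from scratch, and this buys you something genuine: Theorems~\ref{thm:a-lic-calderon-bounds} and~\ref{thm:density-GTI} are stated for \emph{frames}, so they presuppose a Bessel bound $B_g$ (which enters the paper's bandwidth inequality through $\norm{\hat{g}_j}^2 \le B_g$), whereas the corollary asserts the absence of a lower frame bound for \emph{arbitrary} generators, Bessel or not. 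Your observation that finiteness of $J$ makes $w_{f;g}$ a genuine trigonometric polynomial --- so that the mean, the identity $\mean{w_{f;g}} = \int_{\ghat} \abssmall{\hat{f}}^2 t_0$, and the localization $A_g \le t_0$ a.e.\ all go through with no Bessel hypothesis --- together with the remark that $\int_{\ghat} t_0 = \sum_{j \in J} \covol(\Gamma_j)^{-1} \norm{\hat{g}_j}^2$ is finite simply because $J$ is finite and each $g_j \in L^2(G)$, closes this gap cleanly. Read literally, the paper's citation of Theorem~\ref{thm:density-GTI} only excludes the full frame property (both bounds simultaneously); your argument establishes the stated, stronger conclusion that no lower bound can exist, and is therefore the more faithful proof of the corollary as written.
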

\begin{proof}
Since $G$ is non-discrete, $\ghat$ is non-compact, and therefore has infinite Haar measure. 
A GSI system $\gsi$ with  a finite system of lattices, i.e.,
$\card{J}<\infty$, obviously satisfies
$\infty$-UCP and therefore $1$-UCP.  
 Let $C=\max_{j \in J}\frac{1}{\covol{\Gamma_j}}$. Then
 $BW(\mathcal{G}) \le  C \, (\card{J}) < \infty$. The \gsi system cannot be a
 frame by Theorem~\ref{thm:density-GTI}.
\end{proof}

We end this subsection by remarking that, for discrete LCA groups $G$, the above discussions
yield the following additional necessary condition for the Bessel property:
\[ 
 \sum_{j\in J}
\frac{1}{\covol(\Gamma_{\!j})} \norm{\hat{g}_j}^2 \le  B_g \, \mu_{\ghat}(\ghat).
\] 

\subsection{Sufficient conditions for the frame property}
\label{sec:necess-suff-cond}


The next theorem builds on the intuition that motivated the
introduction of our notion of bandwidth. Note in particular that the
conditions of the theorem can only be fulfilled if $BW(\mathcal{G})
\ge \mu_{\ghat}(\ghat)$: Condition~\eqref{item:Kj-1} implies that $K_j$ must be contained in a fundamental domain modulo $\Gamma_j^\bot$, and then \eqref{item:Kj-2} implies that $\ghat$ can be covered by fundamental domains mod $\Gamma_j^\bot$, as $j$ runs through $J$. The latter can only hold if the measures of these domains at least sum up $\mu_{\ghat}(\ghat)$.
\begin{theorem} \label{thm:shannon}
 Let $\mathcal{G} = (\Gamma_j)_{j \in J}$ denote a family of lattices. Assume that there exist Borel sets $K_j \subset \ghat$, for $j \in J$, fulfilling the following two properties:
 \begin{enumerate}[(i)]
  \item $\mu_{\ghat}(K_j \cap \gamma + K_j) =0$ for all $\gamma
    \in \Gamma_j^\bot \setminus \{ 0 \}$ and for all  $j \in J$,  \label{item:Kj-1}
  \item $\mu_{\ghat}(\ghat \setminus \bigcup_{j \in J}
    K_j) = 0$.  \label{item:Kj-2}
 \end{enumerate}
Then there exists a family $(g_j)_{j \in J}$ such that the associated
GSI system is a Parseval frame. In addition, the system can be chosen
to fulfill the relations
\begin{equation} \label{eqn:ntf_disj_1}
\forall j \in J \forall \alpha \in \Gamma_j^\bot~:~ \hat{g}_j(\omega)
\hat{g}_{j}(\omega+\alpha) = 0 \quad (\text{a.e. } \omega)~,
\end{equation} and
\begin{equation} \label{eqn:ntf_disj_2}
 \forall j_1,j_2 \in J \mbox{ with } j_1 \not= j_2~:~\hat{g}_{j_1}(\omega) \hat{g}_{j_2}(\omega) = 0  \quad(\text{a.e. } \omega)~.
\end{equation}
If, in addition to (i) and (ii),  the sets $\{K_j\}_{j \in J}$ fulfill $\mu_{\widehat{G}}(K_j \cap K_j') = 0$, for $j \not= j'$, as well as $\mu_{\widehat{G}}(K_j) = \frac{1}{\covol(\Gamma_j)}$, there exist orthonormal basis generators with these properties. 
\end{theorem}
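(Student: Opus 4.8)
The plan is to construct the generators explicitly by band-limiting, following the Shannon sampling intuition that motivated the definition of bandwidth, and then to verify the Parseval property by a direct computation rather than through the characterizing equations of Theorem~\ref{thm:meta_convergence_AP_tight} (which would force me to separately establish the $1$-UCP for these generators). After enumerating the countable index set $J$, I would first disjointify the covering: setting $\tilde K_j = K_j \setminus \bigcup_{i<j} K_i$ yields pairwise disjoint Borel sets with $\tilde K_j \subset K_j$ and, by~\eqref{item:Kj-2}, $\mu_{\ghat}(\ghat \setminus \bigcup_j \tilde K_j)=0$. I then define the generators through their Fourier transforms by $\hat g_j = \covol(\Gamma_j)^{1/2}\, \mathbf{1}_{\tilde K_j}$, the covolume normalization being chosen precisely so that the covolume factors cancel in the frame sum below.

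The heart of the argument is the single-lattice computation. Fix $j$. Since $\tilde K_j \subset K_j$ and~\eqref{item:Kj-1} says $K_j$ meets each of its $\Gamma_j^\bot$-translates in a null set, $\tilde K_j$ is contained, up to measure zero, in a fundamental domain $Q_j$ of $\Gamma_j^\bot$, which has measure $\covol(\Gamma_j^\bot)=1/\covol(\Gamma_j)$ (this implication was already noted in the discussion preceding the theorem). The restrictions to $Q_j$ of the characters $\omega\mapsto\langle\gamma,\omega\rangle$, $\gamma\in\Gamma_j$, form a \emph{complete} orthogonal system of $L^2(Q_j)\cong L^2(\ghat/\Gamma_j^\bot)$, each of squared norm $\mu_{\ghat}(Q_j)$; this is Fourier analysis on the compact group $\ghat/\Gamma_j^\bot$, whose Pontryagin dual is $\Gamma_j$. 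Writing $\innerprods{f}{T_\gamma g_j} = \int_{Q_j}\hat f(\omega)\overline{\hat g_j(\omega)}\langle\gamma,\omega\rangle\,d\omega$ and applying Parseval's identity for this orthogonal system to $\hat f\,\overline{\hat g_j}\in L^2(Q_j)$, I obtain
\[
\sum_{\gamma\in\Gamma_j} \abs{\innerprods{f}{T_\gamma g_j}}^2 = \mu_{\ghat}(Q_j)\int_{Q_j}\abs{\hat f(\omega)}^2\abs{\hat g_j(\omega)}^2\,d\omega = \int_{\tilde K_j}\abs{\hat f(\omega)}^2\,d\omega,
\]
where the last equality uses $\abs{\hat g_j}^2 = \covol(\Gamma_j)\mathbf{1}_{\tilde K_j}$ together with $\mu_{\ghat}(Q_j)=1/\covol(\Gamma_j)$. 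The completeness (not merely orthogonality) of the character system is essential here, since it is what upgrades a Bessel estimate to the exact Parseval identity.

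Summing this identity over $j\in J$ — legitimate because all terms are nonnegative, so Tonelli applies — and using that the $\tilde K_j$ partition $\ghat$ up to a null set, I get $\sum_{j,\gamma}\abs{\innerprods{f}{T_\gamma g_j}}^2 = \int_{\ghat}\abs{\hat f}^2 = \norm{f}^2$ for every $f\in L^2(G)$, which is the Parseval property. The two support relations then follow directly from the construction: for $\alpha\in\Gamma_j^\bot\setminus\{0\}$ the product $\hat g_j(\omega)\hat g_j(\omega+\alpha)$ is supported in $\tilde K_j\cap(\tilde K_j-\alpha)\subset K_j\cap(-\alpha+K_j)$, a null set by~\eqref{item:Kj-1}, yielding~\eqref{eqn:ntf_disj_1}, while $\hat g_{j_1}\hat g_{j_2}=0$ for $j_1\neq j_2$ since the $\tilde K_j$ are disjoint, yielding~\eqref{eqn:ntf_disj_2}. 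Finally, under the extra hypotheses $\mu_{\ghat}(K_j\cap K_{j'})=0$ and $\mu_{\ghat}(K_j)=1/\covol(\Gamma_j)$, the $K_j$ are already disjoint up to null sets, so I may take $\tilde K_j=K_j$; then $\norm{g_j}^2=\norm{\hat g_j}^2=\covol(\Gamma_j)\,\mu_{\ghat}(K_j)=1$, and a Parseval frame of normalized vectors is an orthonormal basis, as recorded in the preliminaries. I expect the only genuine obstacle to be careful bookkeeping of the covolume and orthogonality constants in the single-lattice step — pinning down $\mu_{\ghat}(Q_j)$ and the character norms exactly — since the cancellation producing $\int_{\tilde K_j}\abs{\hat f}^2$, the clean collapse to $\norm{f}^2$, and the norm-one computation for the orthonormal basis case all depend on getting these constants right.
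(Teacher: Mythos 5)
Your proposal is correct, and the construction is identical to the paper's: the same disjointification $\tilde K_j = K_j\setminus\bigcup_{i<j}K_i$ and the same generators $\hat g_j=\covol(\Gamma_j)^{1/2}\,\mathbf{1}_{\tilde K_j}$. Where you diverge is in how the single-lattice Parseval identity is certified. The paper first enlarges $K_j$ to a full fundamental domain $K_j^1$ of $\Gamma_j^\bot$, invokes Kluvanek's theorem to get that the $\Gamma_j$-translates of $h_j$ with $\hat h_j=\covol(\Gamma_j)^{1/2}\mathbf{1}_{K_j^1}$ form an orthonormal basis of the band-limited subspace $\mathcal H_j^1$, and then observes that $g_j$ is the orthogonal projection of $h_j$ onto $\mathcal H_j^2=\{f:\hat f\,\mathbf{1}_{\tilde K_j}=\hat f\}$; since that projection commutes with translations, $(T_\gamma g_j)_\gamma$ is the projected image of an orthonormal basis and hence a Parseval frame of $\mathcal H_j^2$, and the global statement follows from orthogonality of the $\mathcal H_j^2$. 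You instead prove the identity $\sum_{\gamma\in\Gamma_j}\abs{\innerprods{f}{T_\gamma g_j}}^2=\int_{\tilde K_j}\abs{\hat f}^2$ directly, via completeness of the characters of $\Gamma_j$ on $L^2(\ghat/\Gamma_j^\bot)$ applied to $\hat f\,\overline{\hat g_j}$, and then sum over $j$ by nonnegativity. This is in essence a self-contained proof of the instance of Kluvanek's theorem that the paper cites, so the two arguments are mathematically equivalent; yours is more elementary and avoids the detour through the intermediate orthonormal system $(h_j)$, while the paper's is shorter given the citation and makes the ``projection of an ONB'' structure explicit (which is also what powers the orthonormal-basis refinement in both write-ups). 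Your constant bookkeeping ($\mu_{\ghat}(Q_j)=1/\covol(\Gamma_j)$ cancelling against $\abs{\hat g_j}^2=\covol(\Gamma_j)\mathbf{1}_{\tilde K_j}$) is correct, as are the two support relations and the normalization argument for the orthonormal-basis case; note only that \eqref{eqn:ntf_disj_1} is meant for $\alpha\in\Gamma_j^\bot\setminus\{0\}$, which is what both you and the paper actually establish.
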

\begin{proof}
Without loss of generality, we can assume either $J = \N$ or $J = \{ 1, 2, \ldots, N\}$. 
 For each $j$, pick a fundamental domain $K_{j}^1$ of $\Gamma_j^\bot$
 which satisfies $K_j \subset K_{j}^1$ mod $\Gamma_j^\bot$, and define the function $h_j$ by 
$\hat{h}_j = \covol(\Gamma_j)^{1/2} \mathbf{1}_{K_j^1}$. Then $(T_\gamma h_j)_{\gamma \in \Gamma_j}$ is an orthonormal basis of the closed subspace
 \[
  \mathcal{H}_j^1 = \{ f  \in L^2(G) : \hat{f} \cdot \mathbf{1}_{K_j^1} = \hat{f} \}
 \] by Kluvanek's Theorem \cite{Kluvanek}. 
Next define, for $j \ge 1$, 
\[
 K_j^2 = K_j \setminus \bigcup_{l <j} K_l~.
\]
Then $(K_j^2)_{j \in J}$ is a disjoint covering of $\widehat{G}$, and if we define 
 \[
  \mathcal{H}_j^2 = \{ f  \in L^2(G) : \hat{f} \cdot \mathbf{1}_{K_j^2} = \hat{f} \}~,
 \] we obtain $L^2(G) = \bigoplus_{j} \mathcal{H}_j^2$. Furthermore, for any given $j$, the function $g_j$, defined by 
 $\hat{g}_j = \covol(\Gamma_j)^{1/2} \mathbf{1}_{K_j^2}$ is the projection of $h_j$ into $\mathcal{H}_j^2$. Since this 
 projection commutes with translations, one gets that the associated shift-invariant system $(T_\gamma g_j)_{\gamma \in \Gamma_j} \subset \mathcal{H}_j^2$ is the image of an orthonormal basis
 under the projection onto the subspace $\mathcal{H}_j^2$, and thus a
 Parseval frame of that subspace. Finally, taking the union over
 Parseval frames of an orthogonal sequence of subspaces spanning the whole space yields a Parseval frame of the latter. 
 
 In the case where the $K_j$ fulfill  $\mu_{\widehat{G}}(K_j \cap
 K_j') = 0$, for $j \not= j'$ and $\mu_{\widehat{G}}(K_j) =
 1/\covol(\Gamma_j)$, it follows that $K_j$ and $K_j^2$ only
 differ by a set of measure zero, and the system $(T_\gamma
 g_j)_{\gamma \in \Gamma_j}$ is an orthonormal basis of
 $\mathcal{H}_j^2$. Hence the full system is an orthonormal basis of $L^2(G)$. 
\end{proof}

If the underlying group is $G = \mathbb{R}$, we can now formulate the
following characterization of existence of frame generators. 
\begin{corollary}
\label{cor:bw-inf-existence-in-R}  Suppose that $\mathcal{G} = (\Gamma_j)_{j \in J}$ is a family of lattices in $\mathbb{R}$. Then the following are equivalent:
  \begin{enumerate}[(i)]
   \item There  exists a system $(g_j)_{j \in J}$ of frame generators
     satisfying the LIC-condition. \label{item:R-LIC}
   \item There exists a system $(g_j)_{j \in J}$ of frame generators
     satisfying the $1$-UCP condition. \label{item:R-UCP}
   \item  $BW(\mathcal{G}) = \infty$. \label{item:R-BW}
  \end{enumerate}
\end{corollary}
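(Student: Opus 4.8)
The plan is to establish the cyclic chain of implications \ref{item:R-LIC} $\Rightarrow$ \ref{item:R-UCP} $\Rightarrow$ \ref{item:R-BW} $\Rightarrow$ \ref{item:R-LIC}. The first two implications are essentially already available. For \ref{item:R-LIC} $\Rightarrow$ \ref{item:R-UCP} I would simply observe that the very same generators work: by Remark~\ref{rem:ucp_vs_lic} the LIC implies the $\alpha$-LIC and hence $\infty$-UCP, and uniform convergence together with the estimate $\mean{\abs{\cdot}} \le \norm[\infty]{\cdot}$ forces the $1$-UCP (the required membership $w_{f;g} \in \AP(G)$ being supplied by uniform convergence). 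For \ref{item:R-UCP} $\Rightarrow$ \ref{item:R-BW} I would invoke Theorem~\ref{thm:density-GTI}: a $1$-UCP frame satisfies $BW(\mathcal{G}) \ge (A_g/B_g)\,\mu_{\ghat}(\ghat)$, and since $G = \R$ is non-discrete, $\ghat = \R$ is non-compact with $\mu_{\ghat}(\ghat) = \infty$, whence $BW(\mathcal{G}) = \infty$.

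The substance of the corollary lies in \ref{item:R-BW} $\Rightarrow$ \ref{item:R-LIC}, and here I would construct the generators explicitly and feed them into Theorem~\ref{thm:shannon}. Every lattice in $\R$ has the form $\Gamma_j = c_j\Z$ with $c_j > 0$, so $\covol(\Gamma_j) = c_j$, $\Gamma_j^\bot = c_j^{-1}\Z$, and the hypothesis reads $BW(\mathcal{G}) = \sum_{j} c_j^{-1} = \infty$. A fundamental domain of $\Gamma_j^\bot$ is an interval of length $c_j^{-1}$, so any interval $I_j$ with $\abs{I_j} = c_j^{-1}$ automatically satisfies condition~\ref{item:Kj-1}. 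First I would split $J$ into two index sets $J_+ \sqcup J_-$, each carrying infinite total length $\sum c_j^{-1} = \infty$; this is possible for any divergent series of positive terms by grouping the indices into consecutive blocks of length-sum $\ge 1$ and distributing the blocks alternately. Placing the intervals $\{I_j\}_{j \in J_+}$ consecutively (without gaps or overlaps) along $[0,\infty)$ and $\{I_j\}_{j \in J_-}$ along $(-\infty,0)$ yields a partition of $\R$ up to a countable set, because the divergence of the partial length-sums forces the intervals to exhaust each half-line. This gives sets $K_j = I_j$ meeting conditions~\ref{item:Kj-1} and~\ref{item:Kj-2}, even in the stronger disjoint form with $\mu_{\ghat}(K_j) = 1/\covol(\Gamma_j)$, so Theorem~\ref{thm:shannon} produces orthonormal basis generators $\hat{g}_j = c_j^{1/2}\mathbf{1}_{I_j}$.

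It then remains to verify that these generators satisfy the classical LIC, and this is the step I expect to be the main obstacle. Writing the LIC weights as in Remark~\ref{rem:ucp_vs_lic}, for $\hat{g}_j = c_j^{1/2}\mathbf{1}_{I_j}$ one computes $\tilde{c}_{j,\alpha} = \int_{I_j} \abs{\hat{f}(\omega)\hat{f}(\omega+\alpha)}\,d\omega$ for $\alpha \in \Gamma_j^\bot = c_j^{-1}\Z$. Fix $f \in \cD$ with blind spot $E = \emptyset$, so that $\hat{f}$ is bounded and supported in some interval $[-R,R]$. The decisive feature of the consecutive placement is local finiteness: since the partial length-sums diverge, the left endpoints of the $I_j$ march off to $\pm\infty$, so only finitely many $I_j$ meet $[-R,R]$, and $\tilde{c}_{j,\alpha}$ vanishes for all other $j$. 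For each of these finitely many $j$, compact support of $\hat{f}$ restricts $\alpha \in c_j^{-1}\Z$ to a finite set, and each $\tilde{c}_{j,\alpha} \le \norm[\infty]{\hat{f}}^2\,\abs{I_j} < \infty$. Hence $\sum_{j,\alpha} \abs{\tilde{c}_{j,\alpha}}$ is a finite sum and the LIC holds. I would emphasize that local finiteness is genuinely needed: a careless covering that crowds infinitely many short intervals into a bounded region would make the LIC sum diverge, even though conditions~\ref{item:Kj-1}--\ref{item:Kj-2} of Theorem~\ref{thm:shannon} still hold and still yield a frame.
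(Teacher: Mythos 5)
Your proposal is correct, and its overall architecture --- the cycle \eqref{item:R-LIC} $\Rightarrow$ \eqref{item:R-UCP} $\Rightarrow$ \eqref{item:R-BW} $\Rightarrow$ \eqref{item:R-LIC}, with the first two implications delegated to Remark~\ref{rem:ucp_vs_lic} and Theorem~\ref{thm:density-GTI} --- is exactly that of the paper. The difference lies entirely in how \eqref{item:R-BW} $\Rightarrow$ \eqref{item:R-LIC} is closed. The paper invokes Theorem~\ref{thm:shannon} and then verifies the sum $\sum_{j,\alpha}|c_{j,\alpha}|$ built from the \emph{$\alpha$-LIC} coefficients: the disjointness relation \eqref{eqn:ntf_disj_1} kills every term with $\alpha\neq 0$, and the $\alpha=0$ terms collapse to $\int|\hat f|^2\cdot(\text{Calder\'on sum})=\|f\|^2$. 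That computation is clean and needs no geometric information about where the sets $K_j$ sit. You instead build the covering explicitly by consecutive intervals on the two half-lines and verify the \emph{classical} LIC with the coefficients $\tilde c_{j,\alpha}=\int_{I_j}|\hat f(\omega)\hat f(\omega+\alpha)|\,d\omega$; since these carry only one factor of $|\hat g_j|^2$, they do \emph{not} vanish for $\alpha\neq 0$, and finiteness of $\sum_{j,\alpha}\tilde c_{j,\alpha}$ genuinely requires the local finiteness of the covering that your consecutive placement guarantees (only finitely many $I_j$ meet $\supp\hat f$, and for each such $j$ only finitely many $\alpha\in c_j^{-1}\Z$ contribute). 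Your closing remark is on point: Example~\ref{ex:UCP_LIC} is precisely a locally non-finite disjoint covering for which the classical LIC fails while the Parseval property survives, so the geometric care you take is not cosmetic. In short, your argument buys the stronger conclusion (the classical LIC of \cite{MR1916862,MR2283810}, which is what statement \eqref{item:R-LIC} literally names, rather than the weaker $\alpha$-LIC that the paper's computation establishes) at the cost of a more hands-on construction; both versions suffice for the equivalence, since either condition implies the $1$-UCP needed for \eqref{item:R-LIC} $\Rightarrow$ \eqref{item:R-UCP}.
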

\begin{proof}
The implication \eqref{item:R-LIC} $\Rightarrow$ \eqref{item:R-UCP} is
clear by Remark \ref{rem:ucp_vs_lic}. Implication \eqref{item:R-UCP} $\Rightarrow$ \eqref{item:R-BW} is provided by Theorem~\ref{thm:density-GTI}. 
 Finally, if $BW(\mathcal{G}) = \infty$, we use
 Theorem~\ref{thm:shannon} to construct generators for
 $\mathcal{G}$. Given any $f \in \cD$, we use
 (\ref{eqn:ntf_disj_1}) and the construction of the $\hat{g}_j$ to verify LIC via 
 \begin{align*}
\sum_{j,\alpha}  |c_{j,\alpha}| &=  \sum_{j,\alpha} \frac{1}{\covol(\Gamma_j)} \int_{\R^1} \left|\hat{f}(\omega) \hat{g}_j(\omega) \hat{g}_j(\omega+\alpha) \hat{f}(\omega+\alpha)\right| d\omega \\
&=  \sum_{j} \frac{1}{\covol(\Gamma_j)} \int_{\R^1}
  |\hat{f}(\omega)|^2  |g_j(\omega)|^2  d\omega \qquad (\text{by } \eqref{eqn:ntf_disj_1}) \\
&=   \int_{\R^1} |\hat{f}(\omega)|^2   \underbrace{\sum_{j} \frac{1}{\covol(\Gamma_j)}|g_j(\omega)|^2}_{\equiv 1} d\omega \\
&=  \| f \|^2~.
 \end{align*} 
 \end{proof}

We remark that the equivalences in
Corollary~\ref{cor:bw-inf-existence-in-R} are false without the LIC/UCP
assumption. This follows from Theorem~\ref{thm:ONB_fbw}, proved in Section~\ref{sec:system-bandwidth}.

The next result describes classes of lattice systems in $\R^n$ for which the intuition from the one-dimensional case remains valid. 
 Example~\ref{ex:counter_inf_bw} in Section~\ref{sec:indep-latt-ucp} shows that the assumption on the singular values cannot be dropped. 
 
\begin{proposition} \label{prop:near_iso}
 Assume that the system $\mathcal{G} = (C_j \Z^n)_{j \in J}$ of lattices in $\mathbb{R}^n$ has the property that for all $j \in J$, the quotient of maximal singular value of $C_j$, divided by the minimal singular value, is bounded by a constant. Then $BW(\mathcal{G}) = \infty$ implies the existence of a family of tight frame generators.
 \end{proposition}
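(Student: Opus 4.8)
The plan is to reduce everything to Theorem~\ref{thm:shannon}: it suffices to produce Borel sets $K_j \subseteq \ghat = \R^n$ satisfying the packing condition~(i) and the covering condition~(ii), after which the theorem directly furnishes Parseval (in particular tight) frame generators. I would take each $K_j$ to be a Euclidean ball $B(x_j,r_j)$, with a radius $r_j$ dictated by the geometry of $\Gamma_j^\bot$ and a center $x_j$ chosen so that the balls cover $\R^n$.

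First I record the geometric consequence of the near-isometry hypothesis. Write $\sigma_{\min}(C_j)\le\cdots\le\sigma_{\max}(C_j)$ for the singular values of $C_j$ and set $s_j := \covol(\Gamma_j)^{-1/n}$, so that $BW(\mathcal{G}) = \sum_{j} s_j^n$. Since $\covol(\Gamma_j) = \lvert\det C_j\rvert = \prod_i \sigma_i(C_j)$, with all singular values lying in $[\sigma_{\min}(C_j),\sigma_{\max}(C_j)]$ and $\sigma_{\max}(C_j) \le \kappa\,\sigma_{\min}(C_j)$ (here $\kappa$ is the uniform bound on the singular-value quotient), a short computation gives $\sigma_{\max}(C_j)^{-1} \ge s_j/\kappa$. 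The dual lattice is $\Gamma_j^\bot = C_j^{-T}\Z^n$, whose nonzero vectors satisfy $\lvert C_j^{-T}k\rvert \ge \sigma_{\min}(C_j^{-T})\,\lvert k\rvert = \sigma_{\max}(C_j)^{-1} \ge s_j/\kappa$ for $k\in\Z^n\setminus\{0\}$. Thus the minimal distance of $\Gamma_j^\bot$ is at least $s_j/\kappa$, with the same constant $\kappa$ for every $j$.

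Now set $r_j := s_j/(2\kappa)$ and $K_j := B(x_j,r_j)$. For \emph{any} choice of centers, condition~(i) holds: two sets $B(x_j,r_j)$ and $\gamma + B(x_j,r_j)$ with $\gamma\in\Gamma_j^\bot\setminus\{0\}$ have centers at distance $\lvert\gamma\rvert \ge s_j/\kappa = 2r_j$, so their intersection is a null set. It remains to choose the centers so that $\mu_{\ghat}(\R^n \setminus \bigcup_j B(x_j,r_j)) = 0$, which is~(ii). Note that $\sum_j r_j^n = (2\kappa)^{-n}\sum_j s_j^n = (2\kappa)^{-n}\,BW(\mathcal{G}) = \infty$ by hypothesis.

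The crux is therefore the following covering statement, which I expect to be the main obstacle: given positive radii $(r_j)_j$ with $\sum_j r_j^n = \infty$, one can translate the balls $B(\cdot,r_j)$ so that their union is all of $\R^n$ up to a null set. I would prove this in two steps. First, split the divergent series $\sum_j r_j^n$ into countably many subseries, indexed by disjoint blocks $(J_m)_{m\in\N}$ exhausting $J$, each of which still diverges; this is possible for any divergent series of positive terms by a round-robin assignment that increments each block's partial sum by at least one in infinitely many rounds. Second, fix a tiling of $\R^n$ by unit cubes $(Q_m)_{m\in\N}$ and use the balls indexed by $J_m$ to cover $Q_m$ up to a null set: since $\sum_{j\in J_m} r_j^n = \infty$, either infinitely many of these radii are bounded below, in which case finitely many translates already cover the bounded cube $Q_m$, or the radii tend to zero, in which case the small balls form a Vitali cover of $Q_m$ and a greedy volume-exhaustion argument covers $Q_m$ up to measure zero. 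Taking the union over $m$ gives a cover of $\R^n$ up to a null set, which is precisely~(ii); Theorem~\ref{thm:shannon} then yields the desired tight (Parseval) frame generators. The delicate point is the second step: infinite total volume does not by itself force a cover, so one must genuinely exploit the freedom to position the small balls, and it is the Vitali/greedy argument rather than a naive volume count that makes the covering go through.
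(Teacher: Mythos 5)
Your proposal is correct and shares the paper's overall architecture --- reduce to Theorem~\ref{thm:shannon} by producing, for each $j$, a set of controlled shape that packs under $\Gamma_j^\bot$-translates and has volume comparable to $1/\covol(\Gamma_j)$, then translate these sets to cover $\ghat$ --- but both key steps are implemented differently. For the geometric step, the paper runs an explicit chain of inclusions through the singular value decomposition of $C_j^{-T}$ to inscribe a cube of side $\sigma_{\mathrm{min}}(C_j^{-T})/(4n^2)$ in the fundamental domain $C_j^{-T}(-1/2,1/2)^n$; your route via the minimal distance of $\Gamma_j^\bot$ (giving a ball of radius $s_j/(2\kappa)$ satisfying condition (i) for \emph{any} center) is arguably cleaner and your estimates $\sigma_{\mathrm{max}}(C_j)^{-1}\ge s_j/\kappa$ and $\sum_j r_j^n=(2\kappa)^{-n}BW(\mathcal{G})$ check out. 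For the covering step, the paper proves a dedicated Lemma~\ref{lem:cubes}: cubes of divergent total volume can be translated to cover $\R^n$, via reduction to dyadic side lengths so that the uncovered region is always a disjoint union of dyadic cubes into which the next cube fits exactly. Your ball version is true but the Vitali covering theorem is not quite the right tool, since each radius $r_j$ may be used for only one translate, whereas a Vitali cover presupposes that every point is covered by admissible sets of arbitrarily small diameter all simultaneously available; what does work is precisely the greedy volume exhaustion you name, made quantitative by an averaging argument: for $U$ open in a unit cube and $r\le 1$, Fubini gives $\int|B(x,r)\cap U|\,dx=|B(0,r)|\,|U|$ with integrand supported in a set of measure at most $3^n$, so some center captures at least $3^{-n}|B(0,r)|\,|U|$ of the uncovered mass, whence $|U_N|\le\exp\bigl(-3^{-n}\sum_{\ell\le N}|B(0,r_\ell)|\bigr)\to 0$. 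Alternatively you could sidestep this entirely by inscribing a cube of side $2r_j/\sqrt{n}$ in each ball and invoking the paper's Lemma~\ref{lem:cubes} verbatim. Either way your block decomposition of the divergent series into countably many divergent subseries, one per unit cube, matches the reindexing at the end of the paper's lemma.
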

 \begin{proof}
 Fix $j \in J$, and let $\sigma_{\mathrm{min}}(j)$ and $\sigma_{\mathrm{max}}(j)$
 denote the minimal and maximal singular value of $C_j^{-T}$, respectively. By the assumption on the family, we have
 \begin{equation} \label{eqn:aniso}
 \frac{ \sigma_{\mathrm{max}}(j)}{\sigma_{\mathrm{min}}(j)} \le K~,
 \end{equation}
 for $K>0$ fixed. We let $C_j^{-T} = U D V$ denote the singular value
 decomposition, where $U$ and $V$ are orthogonal, and $D$ diagonal
 with diagonal entries ranging between $\sigma_{\mathrm{min}}(j)$ and
 $\sigma_{\mathrm{max}}(j)$. To simplify notation, we suppress the dependence of
 $j$ in the singular values. Denoting by $B_{1/2}(0)$ the open ball around zero with respect to the euclidean norm, we have the inclusions
 \[
  (-1/2,1/2)^n \subset 2 \sqrt{n} B_{1/2}(0) \subset 2n (-1/2,1/2)^n~.
 \]
 This gives the following chain of inclusions
 \begin{eqnarray*}
   \sigma_{\mathrm{min}}  U^{-1}(-1/2,1/2)^n & \subset&  2 \sqrt{n} \sigma_{\mathrm{min}} U^{-1} B_{1/2}(0) \\ & \subset &  2n \sigma_{\mathrm{min}} (-1/2,1/2)^n ~.
 \end{eqnarray*}
On the other hand, we have $\sigma_{\mathrm{min}}(B_{1/2}(0)) \subset D (B_{1/2}(0))$, and thus, since $V B_{1/2}(0) = B_{1/2}(0)$, we get
\begin{eqnarray*}
 \sigma_{\mathrm{min}}(-1/2,1/2)^n & \subset&  2 \sqrt{n} \sigma_{\mathrm{min}} B_{1/2}(0) \subset  2 \sqrt{n} \sigma_{\mathrm{min}}  V B_{1/2}(0) \\ & \subset & 2 \sqrt{n} DV B_{1/2}(0)
 \subset 2n DV (-1/2,1/2)^n ~.
\end{eqnarray*} Combining these inclusions yields
\[
 \frac{\sigma_{\mathrm{min}}}{4n^2} (-1/2,1/2)^n \subset \frac{\sigma_{\mathrm{min}}}{2n} U(-1/2,1/2)^n \subset UDV (-1/2,1/2)^n = C_j^{-T} (-1/2,1/2)^n~.
\]
Furthermore, recalling the dependence of $j$, we have
\[
 |\det(C_j)^{-T}| \le \sigma_{\mathrm{max}}(j)^n \le K^n \sigma_{\mathrm{min}}(j)^n 
\] via (\ref{eqn:aniso}),
and thus the infinite bandwidth assumption yields
\[
 \sum_{j \in J}  \sigma_{\mathrm{min}}(j)^n = \infty~. 
\]
To summarize, we have that the fundamental domains $C_j[-1/2,1/2)^n$ modulo $\Gamma_j^\bot$ contain cubes of  infinite combined volume. 
Now the elementary, but somewhat technical following Lemma \ref{lem:cubes} shows the desired covering property. 
\end{proof}
 
 \begin{lemma} 
 \label{lem:cubes}
  Let $K_j = k_j [0,1)^n$, $j \in \N$, denote a sequence of cubes in $\R^n$, with $\sum_{j \in \N} k_j^n = \infty$. Then there exist vectors $\tau_j \in \R^n$ such that $\R^n = \bigcup_{j \in \N}\tau_j + K_j$. 
 \end{lemma}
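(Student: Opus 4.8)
The plan is to cover the integer unit cells $Q_z = z + [0,1)^n$, $z \in \Z^n$, one at a time, exploiting that $\R^n = \bigcup_{z \in \Z^n} Q_z$ is a countable union. Cubes that end up unused after the construction may be placed arbitrarily (say with $\tau_j = 0$) without harming the covering, so it suffices to assign to each cell a finite block of cubes whose translates cover it, and to guarantee we never run out of cubes.

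First I would pass from the cubes $K_j = k_j[0,1)^n$ to \emph{dyadic tiles}. Any $K_j$ with $k_j > 1$ already covers a full cell after a suitable translation; if infinitely many such cubes occur we are immediately done, and otherwise discarding them leaves $\sum_j k_j^n = \infty$ among the cubes with $k_j \le 1$. For such a cube set $\ell_j = \lceil \log_2(1/k_j) \rceil$, so that a translate of the dyadic cube $2^{-\ell_j}[0,1)^n$ (of \emph{level} $\ell_j$) fits inside $K_j$; since $2^{-\ell_j} \ge k_j/2$ we have $2^{-\ell_j n} \ge 2^{-n} k_j^n$, whence the total tile volume still diverges, $\sum_j 2^{-\ell_j n} = \infty$. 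Using this divergence I would partition the index set into countably many \emph{finite} blocks $(G_z)_{z \in \Z^n}$, each satisfying $\sum_{j \in G_z} 2^{-\ell_j n} \ge 1$ (greedily closing a block as soon as its partial tile-volume reaches $1$; since the tail sum is always infinite, each block closes after finitely many steps and infinitely many blocks arise), and assign block $G_z$ to the cell $Q_z$.

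It then remains to show that the dyadic tiles indexed by a block $G_z$, of total volume at least $1 = \mathrm{vol}(Q_z)$, can be placed to cover $Q_z$. Here I would use a greedy \emph{largest-first} placement adapted to the dyadic structure: order the tiles of $G_z$ by nondecreasing level, and place each tile of level $\ell$ on the lexicographically first level-$\ell$ dyadic subcell of $Q_z$ that is not yet covered. Each such placement then covers a fresh volume $2^{-\ell n}$, so after finitely many placements the covered volume reaches $\mathrm{vol}(Q_z)$ and $Q_z$ is filled. Finally, choosing $\tau_j$ so that $K_j$ contains the dyadic cell on which its tile was placed (possible as $k_j \ge 2^{-\ell_j}$) yields $\bigcup_{j} (\tau_j + K_j) \supseteq \bigcup_{z} Q_z = \R^n$.

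The main obstacle is precisely the no-stall step of the greedy placement, i.e.\ verifying that a level-$\ell$ tile always finds an empty level-$\ell$ cell while $Q_z$ is not yet full. This follows from a Kraft-type observation: at the moment a level-$\ell$ tile is placed, every previously placed tile has level at most $\ell$, so the already-covered region is a finite union of dyadic cells of level at most $\ell$; its complement in $Q_z$, if nonempty, is then itself a union of such cells and therefore contains an entire level-$\ell$ dyadic cell. Without the reduction to dyadic tiles the cubes have incommensurate sizes and a direct arrangement of mixed sizes is awkward to control, whereas the dyadic nesting turns the covering into the combinatorics of a complete $2^n$-ary prefix code, with divergence of $\sum_j 2^{-\ell_j n}$ playing the role of the Kraft sum exceeding one.
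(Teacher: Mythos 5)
Your proof is correct and follows essentially the same strategy as the paper's: reduce to dyadic cubes (losing only a bounded factor in the volume sum), place them largest-first so that the uncovered part of a unit cell is always a union of dyadic cells at least as large as the next tile, and split the family into countably many subfamilies to handle each integer cell of $\R^n$ separately. The only differences are bookkeeping: you use finite blocks of dyadic volume at least $1$ per cell rather than the paper's infinite subfamilies of divergent volume, and your block structure absorbs the case of side lengths not tending to zero, which the paper treats as a separate case.
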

 \begin{proof}
 For the following argument, it is helpful to recall the notion of a dyadic cube. By this we mean a subset $2^k (m + [0,1)^n) \subset \R^n$, with $k \in \Z$ and $m \in \Z^n$.  What we need in the following is that each dyadic cube decomposes into disjoint dyadic cubes of smaller size. 

  We first show the simpler statement that there exist $\tau_j$ ($j \in \N)$ such that 
  \[
   [0,1)^n \subset \bigcup_{j \in \N}\tau_j + K_j~.
  \]
  To see this, we first observe that we may assume that $\{ k_j : j \in \N \} \subset \{ 2^m: m \in \Z \}$: If $\tilde{k}_j$ denotes the largest power of $2$ that is less than or equal to $k_j$, then we have 
$\sum_{j} \tilde{k}_j^n = \infty$ as well, and any covering by the smaller cubes solves the problem, as well.  

Secondly, note that the problem is easy to solve if the $k_j$ do not converge to zero. In that case, there is either an unbounded subsequence (in which case a single cube from the system can cover the unit cube), or there exist infinitely many cubes of the same size, which then can be used to cover the unit cube. 

Hence, possibly after reindexing the sequence, we are left with the case where $(k_j)_{j \in \N}$ is a decreasing sequence of powers of $2$, converging to zero. Here we can inductively pick $\tau_j$, $j=1,\ldots, $ with the property that for all $\ell \in \N$ satisfying 
\begin{equation} \label{eqn:measure_left} [0,1)^n \setminus \bigcup_{j\le\ell} \tau_j + k_j [0,1)^n \not= \emptyset \end{equation}
we have 
\[
\bigcupdot_{j=1}^{\ell+1} \tau_j + k_j [0,1)^n \subset [0,1)^n ~.
\]
To see this, we pick $\tau_1 = 0$. Assuming that $\tau_1,\ldots,\tau_\ell$ are determined, and (\ref{eqn:measure_left}) holds for $\ell$, we note that by the inductive assumption, $[0,1)^n \setminus \bigcup_{j\le\ell} \tau_j + k_j [0,1)^n$ is the complement of a union of dyadic cubes in $[0,1)$, with side-lengths greater than or equal to $k_{\ell}$, which in turn is greater than or equal to $k_{\ell+1}$. In particular, if this complement is nonempty, it is the union of dyadic cubes of side-length $k_{\ell+1}$. Hence there exists $\tau_{\ell+1} = 2^{k_{\ell+1}} m$, with $m \in \Z^n$, with the desired property. 

Since the volumes of the cubes add up to infinity, the condition (\ref{eqn:measure_left}) can only hold for finitely many $\ell$. Hence we have $[0,1)^n \subset \bigcup_{j =1}^N \tau_j + k_j [0,1)^n$, for sufficiently large $N$. 

In order to cover all of $\R^n$ by shifted cubes, we reindex the sequence $(k_j)_{j \in \N}$ into a double sequence $(r_{j,\ell})_{(j,\ell) \in \N^2}$ with the property that, for all $j \in \N$,
$\sum_{\ell \in \N} r_{j,\ell}^n = \infty$. Numbering the cubes of the type $m + [0,1)^n$, with $m \in \Z^n$, as $(M_j)_{j \in \N}$, the first step of the proof shows that we can cover $M_j$ using the cubes with side-lengths $(t_{j,\ell})_{\ell \in \N}$. Hence we have achieved the desired covering of $\R^n$ using the full family of cubes.
 \end{proof}

\section{Finite system bandwidth}
\label{sec:system-bandwidth}

By the intuition outlined in the introduction, and substantiated in
Theorem~\ref{thm:density-GTI}, large bandwidth $BW(\mathcal{G})\ge \mu_{\ghat}(\ghat)$
is necessary for the existence of tight frame generators. On
non-discrete groups even \emph{infinite} bandwidth is necessary. Note however that these conclusions required additional assumptions, in the form of LIC or UCP. We will see that without these assumptions, the bandwidth intuition fails. Surprisingly, we will even see that
$BW(\mathcal{G})$ can be arbitrarily small, while still preserving the
frame property; actually, even orthonormal bases can have arbitrarily
small system bandwidth. 

\begin{theorem} \label{thm:ONB_fbw}
Assume that there exists a sequence $(\Gamma_n)_{n \in \mathbb{N}_0}$ of strictly decreasing lattices $\Gamma_0 \supsetneq \Gamma_1 \supsetneq \ldots$ in $G$. Then,
 given any $\epsilon>0$, there exists a system $\mathcal{L}= (\Lambda_i)_{i \in I}$ of lattices in $G$ with $BW(\mathcal{L}) < \epsilon$, and a system of functions $(h_i)_{i \in I}$ such that the associated GSI system is an orthonormal basis. 
\end{theorem}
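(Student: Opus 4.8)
The plan is to realize the desired orthonormal basis as a \emph{reindexing} of one fixed reference orthonormal basis of $L^2(G)$, cut into shift-invariant orbits along the sparse lattices $\Gamma_n$ by a telescoping partition of $\Gamma_0$ whose successive remainders escape to infinity.

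First I would fix a precompact fundamental domain $Q$ of $\Gamma_0$ and an orthonormal basis $(e_m)_m$ of $L^2(Q)$; extending each $e_m$ by zero to $\tilde e_m \in L^2(G)$ supported in $Q$, the family $(T_\gamma \tilde e_m)_{\gamma \in \Gamma_0,\, m}$ is an orthonormal basis of $L^2(G) = \bigoplus_{\gamma \in \Gamma_0} L^2(\gamma + Q)$, each vector being labelled by a pair $(\gamma,m)$ with $\gamma \in \Gamma_0$. The key observation is that any subset of this basis of the form $\{T_{\rho + \lambda}\tilde e_m : \lambda \in \Gamma_n\} = \{T_\lambda(T_\rho \tilde e_m) : \lambda \in \Gamma_n\}$, indexed by a single coset $\rho + \Gamma_n \subset \Gamma_0$ at fixed profile $m$, is exactly a shift-invariant orbit with lattice $\Lambda = \Gamma_n$ and generator $h = T_\rho \tilde e_m$. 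Hence it suffices, for each fixed $m$, to partition $\Gamma_0$ into cosets of lattices $\Gamma_k$ (with $k$ large) and to declare each coset-at-profile-$m$ one of the $\Lambda_i$. Provided the cosets genuinely exhaust $\Gamma_0$ for every $m$, the union over all cosets and all $m$ reproduces the reference basis, so the resulting generalized shift-invariant system is automatically an orthonormal basis; granting exhaustion for the moment, the only quantitative thing left to control is $BW(\mathcal{L}) = \sum_i 1/\covol(\Lambda_i)$.

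The bandwidth is controlled by a telescoping device. Fixing $m$, I would choose a strictly increasing sequence $K_m = k_0 < k_1 < k_2 < \dots$ and set $d_s = [\Gamma_{k_s} : \Gamma_{k_{s+1}}] \ge 2$. Starting from $L_0 = \Gamma_0$, split $L_s$ into its $d_s$ cosets modulo $\Gamma_{k_{s+1}}$, assign $d_s - 1$ of them to the system (as orbits with lattice $\Gamma_{k_{s+1}}$ and suitably translated profile $\tilde e_m$), and retain the last coset as $L_{s+1}$. Since $\covol(\Gamma_{k_{s+1}}) = d_s \covol(\Gamma_{k_s})$, the contribution of step $s$ is $(d_s - 1)/\covol(\Gamma_{k_{s+1}}) = 1/\covol(\Gamma_{k_s}) - 1/\covol(\Gamma_{k_{s+1}})$, so the series telescopes and the total bandwidth spent on profile $m$ equals $1/\covol(\Gamma_{K_m})$, using $\covol(\Gamma_{k_s}) \to \infty$. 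Choosing $K_m$ so large that $1/\covol(\Gamma_{K_m}) < \epsilon\, 2^{-m}$ and summing over $m$ then yields $BW(\mathcal{L}) < \epsilon$.

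The one genuinely nontrivial point, and the step I expect to be the main obstacle, is \emph{completeness}: the covered set is $\bigcup_s (L_s \setminus L_{s+1}) = \Gamma_0 \setminus \bigcap_s L_s$, so I must arrange $\bigcap_s L_s = \emptyset$, since any nonempty intersection would omit a reference basis vector and destroy completeness. Here I would invoke the profinite completion $\widehat{\Gamma_0} = \varprojlim_s \Gamma_0 / \Gamma_{k_s}$: because the indices $[\Gamma_0 : \Gamma_{k_s}]$ tend to infinity, this inverse limit is an infinite, hence uncountable, compact group, while the image of the countable group $\Gamma_0$ is countable. Choosing the coherent sequence of cosets $(L_s)_s$ to represent any element of $\widehat{\Gamma_0}$ outside the image of $\Gamma_0$ forces $\bigcap_s L_s = \emptyset$, so that every $\gamma \in \Gamma_0$ is removed at some finite stage and the partition exhausts $\Gamma_0$. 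The remaining checks — that each selected block is an orthonormal shift-invariant orbit, that distinct blocks are pairwise disjoint, and that reusing the lattices $\Gamma_n$ across blocks and profiles is permitted by the definition of $BW$ — are routine. (For discrete $G$ the domain $Q$ is finite and there are only finitely many profiles $m$, but the construction is otherwise unchanged, and it is precisely the failure of the $1$-UCP of the resulting system that reconciles this with Theorem~\ref{thm:density-GTI}.)
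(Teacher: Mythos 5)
Your overall architecture -- realize the orthonormal basis as a reindexing of the fixed reference basis $(T_\gamma \tilde e_m)_{\gamma\in\Gamma_0,m}$, cut into shift-invariant orbits by partitioning $\Gamma_0$ into cosets of the deep lattices $\Gamma_k$, with a budget of $\epsilon 2^{-m}$ per profile -- is sound and is essentially the time-domain version of what the paper does (the paper works in frequency domain, refining the constant system $(\Gamma_0)_{\alpha\in\Gamma_0^\perp}$ whose generators come from Theorem~\ref{thm:shannon}). The gap is in the bandwidth accounting, and it is fatal as stated. Your telescoping is anchored at $L_0=\Gamma_0$, which is \emph{not} a coset of $\Gamma_{k_0}=\Gamma_{K_m}$; the first shell $L_0\setminus L_1$ consists of $[\Gamma_0:\Gamma_{k_1}]-1$ cosets of $\Gamma_{k_1}$ and contributes $1/\covol(\Gamma_0)-1/\covol(\Gamma_{k_1})$, not $1/\covol(\Gamma_{K_m})-1/\covol(\Gamma_{k_1})$. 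The series therefore telescopes to $1/\covol(\Gamma_0)$ per profile, and since $L^2(Q)$ is infinite-dimensional for non-discrete $G$, summing over $m$ gives $BW(\mathcal{L})=\infty$. This is not a bookkeeping slip that can be patched: \emph{any} scheme that covers each successive shell $L_s\setminus L_{s+1}$ completely by cosets of a single lattice forces the density contributions to add up to the full density of $\Gamma_0$, i.e.\ to exactly $1/\covol(\Gamma_0)$, so no savings are possible within your scheme.

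The saving requires the genuinely paradoxical decomposition $\Gamma_0=\bigcupdot_{j}\gamma_j+\Gamma_{K_m+j}$ using \emph{one} coset per lattice, so that the uncovered remainder after $j$ steps is a union of many cosets retaining density at least $1-\sum_{i\le j}[\Gamma_0:\Gamma_{K_m+i}]^{-1}>0$ at every finite stage, yet has empty intersection. This is the paper's Lemma~\ref{lem:grp_refine} (in the spirit of the Bownik--Rzeszotnik example, Example~\ref{ex:UCP_LIC}), and exhaustion there is obtained not by a topological argument but by a greedy enumeration: $\gamma_{j+1}$ is taken to be the first element of a fixed enumeration of $\Gamma_0$ not yet covered, which guarantees $h_k\in\bigcup_{j\le k}\gamma_j+\Gamma_{K_m+j}$. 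With that lemma your per-profile cost becomes $\sum_j 1/\covol(\Gamma_{K_m+j})\le 2^{-K_m}/\covol(\Gamma_0)$, and the rest of your argument (the identification of coset blocks with shift-invariant orbits, the $\epsilon2^{-m}$ budget, the remark on discrete $G$ and the failure of $1$-UCP) goes through. Your profinite-completion argument for $\bigcap_s L_s=\emptyset$ is correct as far as it goes, but it only certifies exhaustion for the nested-single-coset remainders of the wasteful scheme; in the corrected construction the remainders are not single cosets and the greedy enumeration replaces it.
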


The remainder of this section will prove the construction. The following results exploit an idea introduced by Bownik and Rzeszotnik in \cite{MR2066821}, namely that it is possible to index the same family of vectors as GSI system over different lattice families. While this relabeling does not affect any pertinent property of the associated frame operator(s), it may influence other properties of the system, most notably its bandwidth.
\begin{definition}
 Let $\mathcal{G} = (\Gamma_j)_{j \in J}$ and $\mathcal{L} = (\Lambda_i)_{i \in I}$ denote lattice systems. We say that $\mathcal{L}$ is a \emph{refinement} of $\mathcal{G}$ if there exists a partition $(I_j)_{j \in J}$ of $I$ and vectors $(\gamma_i)_{i \in I}$ with the property 
 \[
  \forall j \in J~:~\Gamma_j = \bigcupdot_{i \in I_j} \gamma_i + \Lambda_i~.
 \]
\end{definition}

We then have the following obvious fact.
\begin{lemma} \label{lem:reindex}
 Let $\mathcal{G} =  (\Gamma_j)_{j \in J}$ be a system of lattices, and  $\mathcal{L} = (\Lambda_i)_{i \in I}$ a refinement of $\mathcal{G}$. 
 Given a system of functions $(g_j)_{j \in J}$, and define
 \[
  h_{i} = T_{\gamma_i} g_j~,i \in I_j~.
 \] Then the GSI system $(T_\lambda h_{i})_{i \in I,\lambda \in
   \Lambda_i}$ is obtained by reindexing the GSI system $(T_\gamma
 g_j)_{j \in J,\gamma \in \Gamma_j}$. In particular, it is a Bessel
 system, a tight/Parseval frame, or an orthonormal basis if and only if the original system has the same properties. This observation extends to dual systems.
\end{lemma}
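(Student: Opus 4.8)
The plan is to verify that the two indexed families consist of exactly the same vectors, each occurring precisely once, so that the refined system is literally a reindexing of the original. All the frame-theoretic conclusions then follow at once, since every property in question is formulated through sums that converge unconditionally and are therefore invariant under a bijective relabeling of the index set. The only genuine bookkeeping lies in checking that the \emph{disjointness} in the refinement identity makes the index correspondence a bijection rather than a mere surjection.

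First I would unwind the definition of $h_i$. For $i \in I_j$ and $\lambda \in \Lambda_i$, the fact that $x \mapsto T_x$ is a homomorphism gives
\[
 T_\lambda h_i = T_\lambda T_{\gamma_i} g_j = T_{\lambda + \gamma_i} g_j,
\]
so every vector of the refined system is a $\Gamma_j$-translate of $g_j$. As $\lambda$ ranges over $\Lambda_i$, the point $\lambda + \gamma_i$ ranges bijectively over the coset $\gamma_i + \Lambda_i$. Fixing $j$ and letting $i$ vary over $I_j$, the refinement identity $\Gamma_j = \bigcupdot_{i \in I_j} \gamma_i + \Lambda_i$ says precisely that the map $(i,\lambda) \mapsto \gamma_i + \lambda$, defined on the pairs with $i \in I_j$ and $\lambda \in \Lambda_i$, is a bijection onto $\Gamma_j$: disjointness of the union yields injectivity, and the union exhausting $\Gamma_j$ yields surjectivity. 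Consequently, for each fixed $j$, the subfamily $\setprop{T_\lambda h_i}{i \in I_j,\ \lambda \in \Lambda_i}$ coincides with $\setprop{T_\gamma g_j}{\gamma \in \Gamma_j}$ as an indexed family.

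Next I would assemble the global conclusion. Since $(I_j)_{j \in J}$ partitions $I$, taking the union over $j$ shows that the full refined system $(T_\lambda h_i)_{i \in I,\ \lambda \in \Lambda_i}$ is obtained from $(T_\gamma g_j)_{j \in J,\ \gamma \in \Gamma_j}$ by a bijection of the two index sets. The defining relations for being a Bessel system, a tight (respectively Parseval) frame, and an orthonormal basis are all expressed through $\sum |\langle f, \cdot \rangle|^2$, a sum over the index set whose value is independent of the order of summation; likewise the frame operator $S = \sum \langle \cdot, \cdot \rangle\, \cdot$ converges unconditionally. Every such quantity is therefore unchanged under the relabeling, and the refined system possesses each of these properties exactly when the original does.

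Finally, for the extension to dual systems, I would apply the identical relabeling simultaneously to a dual pair of generating systems $(g_j)_{j}$ and $(\tilde g_j)_{j}$: setting $h_i = T_{\gamma_i} g_j$ and $\tilde h_i = T_{\gamma_i} \tilde g_j$ for $i \in I_j$, the reconstruction identity $f = \sum_{j,\gamma} \langle f, T_\gamma g_j \rangle\, T_\gamma \tilde g_j$ is merely reindexed by the same bijection and, converging unconditionally, is preserved term for term. No further obstacle arises; as noted, the sole point demanding care is the first step, where the disjointness of the refinement is exactly what upgrades the index correspondence from a surjection to a bijection.
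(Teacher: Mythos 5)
Your proof is correct and is precisely the argument the paper has in mind: the paper states this lemma without proof, introducing it as an ``obvious fact,'' and your write-up simply supplies the details (the coset bijection coming from disjointness of the refinement, the identity $T_\lambda T_{\gamma_i} g_j = T_{\lambda+\gamma_i} g_j$, and invariance of the unconditionally convergent defining sums under relabeling). Nothing further is needed.
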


\begin{remark}
 Note that whenever one has 
 \[ \Gamma \supset \bigcupdot_{i \in I} \gamma_i + \Lambda_i \]
 with finitely many lattices $\Lambda_i$, $i \in I$, then 
 \[
  \frac{1}{\covol(\Gamma)} \ge \sum_{i \in I} \frac{1}{\covol(\Lambda_i)}~.
 \]
 Hence, if $\mathcal{L}$ is a refinement of $\mathcal{G}$, then one has that 
 \[
 BW(\mathcal{L}) \le BW(\mathcal{G})~.
 \] The whole point of introducing refinements to our discussion is the fact that this inequality can be proper.
 It is also worth noting that whenever the index sets $I_j$ occurring in a refinement are all finite, the bandwidth does not change. 
\end{remark}

We next show that refinements can be constructed from chains of subgroups. Note that Lemma~\ref{lem:grp_refine} is valid also for non-abelian groups, but we only formulate it for the setting we need.  
\begin{lemma} \label{lem:grp_refine}
 Let $H$ denote a countable abelian group, and let $(H_j)_{j \in \mathbb{N}}$ denote a sequence of proper subgroups with finite index, and $H_j \supsetneq H_{j+1}$ for all $j \in \mathbb{N}$. Then there exists a sequence $(\gamma_j)_{j \in \mathbb{N}} \subset H$ such that
 \[
  H = \bigcupdot_{j \in \N} \gamma_j +  H_j~.
 \]
\end{lemma}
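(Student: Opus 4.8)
The plan is to build the representatives $\gamma_j$ by a greedy induction that processes the subgroups in the given order $H_1, H_2, \ldots$ while simultaneously giving priority to covering a fixed enumeration of $H$. Fix an enumeration $H = \{x_n : n \in \N\}$, set $U_0 = \emptyset$, and, having chosen $\gamma_1, \ldots, \gamma_{j-1}$, write $U_{j-1} = \bigcupdot_{i < j} (\gamma_i + H_i)$. The induction step is simply: let $x_{n_j}$ be the element of least index with $x_{n_j} \notin U_{j-1}$, and put $\gamma_j = x_{n_j}$.

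Two observations drive the argument, both resting on the nestedness $H_1 \supsetneq H_2 \supsetneq \cdots$. First, disjointness is automatic: since $H_j \subseteq H_i$ for every $i < j$, the coset $\gamma_j + H_j$ is contained in $\gamma_j + H_i$, which is disjoint from $\gamma_i + H_i$ precisely when $\gamma_j \notin \gamma_i + H_i$; and $\gamma_j \notin U_{j-1}$ by construction. Hence $\gamma_j + H_j$ is disjoint from all earlier cosets, so $U_j = U_{j-1} \cupdot (\gamma_j + H_j)$ is again a disjoint union. Second --- and this is the point at which the construction must be prevented from stalling --- the partial union $U_{j-1}$ is a \emph{proper} subset of $H$, so the least uncovered index $n_j$ always exists. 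Indeed, because the $H_i$ are nested, each $\gamma_i + H_i$ with $i < j$ splits into exactly $[H_i : H_{j-1}]$ cosets of the smallest subgroup $H_{j-1}$, so $U_{j-1}$ comprises $\sum_{i<j} [H_i : H_{j-1}]$ of the $[H : H_{j-1}]$ available cosets; equivalently it occupies the fraction $\sum_{i<j} [H : H_i]^{-1}$ of $H$. The properness of each $H_i$ forces $[H:H_i] \ge 2^i$, whence $\sum_{i<j} [H:H_i]^{-1} \le \sum_{i<j} 2^{-i} < 1$ and $U_{j-1} \neq H$.

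It then remains to verify that the resulting disjoint union exhausts $H$. By minimality in the choice of $n_j$, all of $x_1, \ldots, x_{n_j - 1}$ already lie in $U_{j-1}$, while $x_{n_j} \in \gamma_j + H_j \subseteq U_j$; thus the sequence $(n_j)_j$ is strictly increasing, giving $n_j \ge j$ and $\{x_1, \ldots, x_j\} \subseteq U_j$ for every $j$. Consequently $\bigcupdot_{j \in \N}(\gamma_j + H_j) = \bigcup_j U_j \supseteq \{x_n : n \in \N\} = H$, which together with the disjointness established above yields $H = \bigcupdot_{j \in \N} \gamma_j + H_j$, as desired.

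I anticipate the only genuinely delicate point to be the guarantee that $U_{j-1} \neq H$ at every finite stage: were a finite subfamily of cosets to already tile $H$, no further nonempty coset could be added disjointly and the construction would fail to use all the $H_j$. The nestedness of the subgroups is exactly what makes both this counting estimate and the automatic disjointness clean. Note that the total occupied fraction $\sum_{j} [H:H_j]^{-1}$ may well be strictly less than $1$ in the limit, consistent with the (initially counterintuitive) phenomenon already visible in Example~\ref{ex:UCP_LIC} that an infinite disjoint family of cosets can tile $H$ even though the associated densities sum to less than one.
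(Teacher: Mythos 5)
Your proposal is correct and follows essentially the same greedy construction as the paper: enumerate $H$, take as $\gamma_j$ the least-indexed element not yet covered, use nestedness for automatic disjointness, and use minimality of the chosen index to show exhaustion. The only (minor) divergence is in justifying that the partial union never equals $H$: you count cosets of $H_{j-1}$ and bound the covered fraction by $\sum_{i<j}[H:H_i]^{-1}\le\sum_{i<j}2^{-i}<1$ using the finite-index hypothesis, whereas the paper argues structurally that the uncovered set is a nonempty union of $H_j$-cosets and that removing the proper sub-coset $\gamma_{j+1}+H_{j+1}$ from $\gamma_{j+1}+H_j$ leaves a nonempty remainder — both are valid.
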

\begin{proof}
 Let $(h_k)_{k \in \mathbb{N}}$ denote an enumeration of $H$. We choose the $\gamma_j$ inductively, with $\gamma_1 = h_1$. 
 Then $H \setminus \gamma_1 + H_1$ is nonempty.
 
 Assume that after $j$ steps, we have found $\gamma_1,\ldots, \gamma_j$ such that 
 \[
 K_j = H \setminus \bigcupdot_{ \ell \le j} \gamma_{\ell} + H_{\ell}
 \] 
is nonempty. Since $H_j \subset H_{\ell}$ for all $\ell < j$, $K_j$ is
a union of $H_j$-cosets. Now pick $k \in \mathbb{N}$ minimal with $h_k
\not\in K_j$, and let $\gamma_{j+1} = h_k$. Then $\gamma_{j+1} +
H_{j+1} \subset K_j$, because $H_j \supset H_{j+1}$, and thus
$(\gamma_{j+1} + H_{j+1} \cap \gamma_{\ell} +  H_{\ell}) \subset (K_j \cap \gamma_{\ell} + H_{\ell}) = \emptyset$, for all $\ell \le j$. Finally, the fact that $H_j \supsetneq H_{j+1}$ implies that 
 \[
  K_{j+1} =  H \setminus \bigcupdot_{ \ell \le j+1} \gamma_{\ell} + H_{\ell}
 \] is a nonempty union of $H_{j+1}$-cosets. 
 
 Thus the inductive procedure can be continued to yield a sequence
 $(\gamma_j)_{j \in \mathbb{N}}$ with $\gamma_j + H_j \cap \gamma_\ell + H_\ell = \emptyset$ for $j \not= \ell$. In addition, the choice of $\gamma_{j+1}$ in the induction step allows to prove inductively that $h_k \in \bigcup_{j \le k} \gamma_j + H_j$, for all $k \in \mathbb{N}$. Hence the sequence has all the desired properties. 
\end{proof}

\begin{proof}[Proof of Theorem \ref{thm:ONB_fbw}]
First consider a constant lattice system $\mathcal{G} = (\Gamma_0)_{\alpha \in \Gamma_0^\bot}$.
If $K \subset \ghat$ is any fundamental domain modulo $\Lambda_0^\bot$, then its translates $K_\alpha = \alpha + K$, with $\alpha \in \Lambda_0^\bot$ are a disjoint covering of $\ghat$, and
Theorem \ref{thm:shannon} provides the existence of a family of
orthonormal basis generators for $\mathcal{G}$. 

We will now construct a refinement $\mathcal{L}$ of $\mathcal{G}$ with finite bandwidth, as follows: Let $I = \Gamma_0^\bot \times \mathbb{N}$, and fix a bijection $\varphi: I \to \mathbb{N}$ 
with the property that, for every $\alpha \in \Gamma_0^\bot$, the sequence $\varphi(\alpha,\cdot)$ is strictly increasing. Given $i = (\alpha,k) \in I$, let $\Lambda_i = \Gamma_{\varphi(i)}$. 
By choice of $\varphi$, we have for all $\alpha \in \Gamma_0^\bot$, that 
\[
 \Gamma_0 \supsetneq \Lambda_{\alpha,1} \supsetneq \Lambda_{\alpha,2} \supsetneq \ldots~.
\]  Hence Lemma \ref{lem:grp_refine} implies that $\mathcal{L}$ is a refinement of $\mathcal{G}$, and since $\varphi$ is bijective, the refined system has bandwidth 
\[
 BW(\mathcal{L}) = \sum_{i \in I}\frac{1}{\covol(\Lambda_i)} = \sum_{n \in \mathbb{N}} \frac{1}{\covol(\Gamma_n)}. 
\] Now for every $n \in \mathbb{N}$, the inclusion $\Gamma_n \subset
\Gamma_{n-1}$ yields $\covol(\Gamma_n) = [\Gamma_{n}:\Gamma_{n-1}]
\covol(\Gamma_{n-1})$. Since the inclusions are strict, all subgroup
indices are at least $2$, which leads to $\covol(\Gamma_n) \ge 2^n \covol(\Gamma_0)$, and we finally arrive at
\[
 BW(\mathcal{L}) \le \sum_{n \in \mathbb{N}} \frac{1}{2^n \covol(\Gamma_0)} = \frac{1}{\covol(\Gamma_0)}
\]
Hence, there exist GSI orthonormal bases with bandwidth
$1/\covol(\Gamma_0)$. 

Now, starting from the constant lattice system $\mathcal{G} =
(\Gamma_m)_{\alpha \in \Gamma_m^\bot}$ in the above construction, we
obtain orthonormal bases with bandwidth less than or equal to $1/\covol(\Gamma_m)$. 
\end{proof}
Lemma~\ref{lem:reindex} showed that a system of frame, Bessel, or dual
frame generators for a system $\mathcal{G}$ can be used to provide a
system of generators for $\mathcal{L}$, whenever $\mathcal{L}$ is a
refinement of $\mathcal{G}$. The converse is generally not true as the
following example shows. 
\begin{example} \label{exa:reindex}
The system $\mathcal{L}  = (2^j \mathbb{Z})_{j \in \mathbb{N}}$ is a
refinement of the single lattice system $\mathcal{G} =
(\mathbb{Z})_{i=1}$. By Theorem~\ref{thm:ONB_fbw}, there exist
orthonormal basis
generators in $L^2(\mathbb{R})$ for $\mathcal{L}$, but by
Corollary~\ref{cor:finite_ls}, $\mathcal{G}$ has no frame generators. 
\end{example}

\section{Independent lattices and UCP}
\label{sec:indep-latt-ucp}

The aim of this section is to exhibit a general setup for which
$\infty$-UCP holds as soon as $w_{f;g,h}$ is bounded. 
Furthermore, we argue that this setup is the generic case of GSI
systems and that it leads to rather stringent condition on the frame generators. 

\begin{definition}
A lattice system $(\Gamma_j)_{j \in J}$ is called \emph{independent}
if for all families $(x_j)_{j \in J'}$ with finite $J' \subset J$ and $x_j \in \Gamma_j$, we have the implication
\[
\sum_{j \in J'} x_j = 0 \Rightarrow \forall j \in J':\,\, x_j = 0\,.
\]
We call the system \emph{pairwise independent} if $\Gamma_j \cap \Gamma_k = \{ 0 \}$, whenever $j \not= k$. 
\end{definition}
We will be interested in lattice families whose dual lattices are independent. The following lemma characterizes this condition in terms of a density property.
\begin{lemma} \label{lem:independent_dense}
Let $(\Gamma_j)_{j \in J}$ denote a family of lattices in $G$. Then the following are equivalent:
\begin{enumerate}[(i)]
\item The dual lattices $(\Gamma_j^\bot)_{j \in J}$ are independent.
\item For all finite subsets $J'$, the subgroup
\[
\{ (x + \Gamma_j)_{j \in J'} : x \in G \} \subset \prod_{j \in J'} G/\Gamma_j
\] is dense with respect to the product topology.
\item The subgroup
\[
\{ (x + \Gamma_j)_{j \in J} : x \in G \} \subset \prod_{j \in J} G/\Gamma_j
\] is dense with respect to the product topology. 
\end{enumerate}
\end{lemma}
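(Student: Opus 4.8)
The plan is to prove (i) $\Leftrightarrow$ (ii) by Pontryagin duality and (ii) $\Leftrightarrow$ (iii) by elementary properties of the product topology. For a finite subset $J' \subset J$, write $\Delta_{J'} : G \to \prod_{j \in J'} G/\Gamma_j$ for the diagonal homomorphism $x \mapsto (x + \Gamma_j)_{j \in J'}$ and let $H_{J'} = \Delta_{J'}(G)$ denote its image, the subgroup appearing in~(ii); similarly let $D = \{(x+\Gamma_j)_{j \in J} : x \in G\}$ be the subgroup in~(iii).

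For (i) $\Leftrightarrow$ (ii), the starting point is that each $G/\Gamma_j$ is compact (as $\Gamma_j$ is a lattice), so $A := \prod_{j \in J'} G/\Gamma_j$ is a compact abelian group, and a subgroup of $A$ is dense if and only if its annihilator in $\widehat{A}$ is trivial. First I would recall the standard identification $(G/\Gamma_j)^\wedge \cong \Gamma_j^\bot$ obtained by dualizing $0 \to \Gamma_j \to G \to G/\Gamma_j \to 0$; for finite $J'$ this gives $\widehat{A} \cong \prod_{j \in J'} \Gamma_j^\bot$. Next I would compute the annihilator of $H_{J'}$: a character $(\alpha_j)_{j \in J'} \in \prod_{j \in J'} \Gamma_j^\bot$ evaluates on $\Delta_{J'}(x)$ to $\prod_{j \in J'} \langle x, \alpha_j \rangle = \langle x, \sum_{j \in J'} \alpha_j \rangle$, which equals $1$ for every $x \in G$ exactly when $\sum_{j \in J'} \alpha_j = 0$ in $\ghat$, by nondegeneracy of the $G$-$\ghat$ pairing. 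Hence $H_{J'}$ is dense in $A$ if and only if the only relation $\sum_{j \in J'} \alpha_j = 0$ with $\alpha_j \in \Gamma_j^\bot$ is the trivial one. Requiring this for every finite $J'$ is precisely the independence of $(\Gamma_j^\bot)_{j \in J}$, which establishes (i) $\Leftrightarrow$ (ii).

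For (ii) $\Leftrightarrow$ (iii), I would use that density in the product topology is detected on finite coordinate projections. The coordinate projection $\pi_{J'}$ onto $\prod_{j \in J'} G/\Gamma_j$ is a continuous surjection carrying $D$ onto $H_{J'}$; since the image of a dense set under a continuous surjection is dense, (iii) forces each $H_{J'}$ to be dense, giving (iii) $\Rightarrow$ (ii). Conversely, any basic open set of $\prod_{j \in J} G/\Gamma_j$ restricts only finitely many coordinates, say those indexed by some finite $J'$, hence is $\pi_{J'}^{-1}(U)$ for an open $U \subset \prod_{j \in J'} G/\Gamma_j$; density of $H_{J'}$ produces $x \in G$ with $\Delta_{J'}(x) \in U$, so that $\Delta(x) \in D$ lies in the chosen basic open set. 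Thus $D$ meets every basic open set, proving (ii) $\Rightarrow$ (iii).

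The only substantive content is (i) $\Leftrightarrow$ (ii); the step to get right is the annihilator computation combined with the duality fact that in a compact abelian group a subgroup is dense iff its annihilator is trivial. I anticipate no real obstacle beyond carefully invoking $(G/\Gamma_j)^\wedge \cong \Gamma_j^\bot$ and nondegeneracy of the duality pairing, while (ii) $\Leftrightarrow$ (iii) is routine point-set topology.
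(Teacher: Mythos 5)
Your proof is correct and follows essentially the same route as the paper: both identify the dual of $\prod_{j \in J'} G/\Gamma_j$ with $\prod_{j \in J'} \Gamma_j^\bot$, compute that a character $(\alpha_j)_{j\in J'}$ kills the diagonal image precisely when $\sum_{j\in J'}\alpha_j = 0$, and invoke the duality between density of the image and triviality of the annihilator (phrased in the paper as injectivity of the dual homomorphism). The only cosmetic difference is that you spell out the routine (ii) $\Leftrightarrow$ (iii) product-topology step, which the paper dismisses as standard.
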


\begin{proof}
For the proof of $(i) \Leftrightarrow (ii)$, consider the continuous group homomorphism $\varphi : G \to \prod_{j \in J'} G/\Gamma_j$, defined by $\varphi(x) = (x + \Gamma_j)_{j \in J'} $. Let 
\[
\hat{\varphi} : \left( \prod_{j \in J'} G/\Gamma_j \right)^\wedge \to \ghat
\]
denote the dual homomorphism, defined by
\[
\langle x, \hat{\varphi}(\alpha) \rangle = \langle \varphi(x), \alpha \rangle\,\,.
\]
We may identify $ \left( \prod_{j \in J'} G/\Gamma_j \right)^\wedge$ with $\prod_{j \in J'} \Gamma_j^\bot$, using the duality 
\[
\langle (x_j + \Gamma_j)_{j \in J'}, (\alpha_j)_{j \in J'} \rangle = \prod_{j \in J'} \langle x_j, \alpha_j \rangle\,\,.
\]
With this identification, we obtain
\begin{eqnarray*}
\langle x, \hat{\varphi}((\alpha_j)_{j \in J'}) \rangle & = & \langle (x + \Gamma_j)_{j \in J'}, (\alpha_j)_{j \in J'} \rangle \\
& = & \prod_{j \in J'} \langle x, \alpha_j \rangle  \\ 
& = & \langle x, \sum_{j \in J'} \alpha_j \rangle\,\,,
\end{eqnarray*} leading to 
\[
\hat{\varphi}((\alpha_j)_{j \in J'}) = \sum_{j \in J'} \alpha_j \,\,.
\] 
Now (i) is equivalent to injectivity of $\hat{\varphi}$, for all choices of finite $J' \subset J$, whereas (ii) is equivalent to the fact that $\varphi$ has dense image. But the statements about the homomorphisms are equivalent by duality theory: If $\varphi$ has dense image, then two continuous functions (for example, characters) coinciding on $\varphi(G)$ must coincide everywhere, which shows (ii) $\Rightarrow$ (i). And if the image of $\varphi$ is not dense, there exists a nontrivial character on the quotient $ \left( \prod_{j \in J'} G/\Gamma_j \right)/\overline{\varphi(G)}$, which gives rise to a character on $ \left( \prod_{j \in J'} G/\Gamma_j \right)$ that coincides on $\varphi(G)$ with the trivial character, showing that $\hat{\varphi}$ is not injective, and thus (ii) $\Rightarrow$ (i). 

Finally, the equivalence (ii) $\Leftrightarrow$ (iii) is a standard fact about product topologies. 
\end{proof}

\begin{remark}
For $G = \mathbb{R}$ and $\Gamma_i = c_i \mathbb{Z} \subset G$,
independence of the dual lattices is equivalent to linear independence
of $(1/c_j)_{j \in J}$ over the rationals. We note that this is not
the same as linear independence of $(c_j)_{j \in J}$ over the
rationals. E.g., if $c \in \mathbb{R}$ is transcendental, then the
family $(c_n)_{n \in \N}$ given by $c_n = n+c$, $n \in \mathbb{N}$, is linearly dependent over the rationals, but $(1/c_n)_{n \in \mathbb{N}}$ is not. 

While the condition of rational independence may seem strong, one can argue that in a sense,
it is the generic case: If one chooses the lattice generators $c_j$ randomly, with independent Lebesgue-absolute continuous
probability densities for each $j \in J$, then the system $(1/c_j)_{j \in J}$ will be rationally independent with probability one.

If $G = \mathbb{Z}$ and $\Gamma_j =  c_j \mathbb{Z} \subset G$, then the dual lattices are independent if and only if the $c_j$ are pairwise prime. This can be seen by Lemma~\ref{lem:independent_dense} and  the  Chinese Remainder Theorem, stating that
\[
\mathbb{Z} \ni n \mapsto (n + c_j \mathbb{Z})_{j \in J'} \in \prod_{j} \mathbb{Z}/c_j \mathbb{Z}
\] is onto if and only if the $c_j$ are pairwise prime. 
\end{remark}

The following theorem shows the scope of the $\infty$-UCP condition. 
 \begin{theorem} \label{thm:rat_ind}
  Let $(\Gamma_j)_{j \in J}$ denote a system of lattices, and let
  $(g_j)_{j \in J},(h_j)_{j \in J} \subset L^2(G)$.  
  \begin{enumerate}[(i)]
   \item Suppose that the dual lattices are independent. Let $f \in
     \cD$. Then $w_{f;g}
     \in L^\infty(G)$  if and only if $w_{f;g}=\sum_{j \in J}
     w_{f;g,j}$ converges uniformly. 
     In particular, every Bessel family $(g_j)_{j \in J}$
     fulfills the $\infty$-UCP with respect to any closed set $E \subset \ghat$ of measure zero. \label{item:ind-lat-UCP}
    \item Suppose that the dual lattices are pairwise
      independent. Then two families $(g_j)_{j \in J}$ and $(h_j)_{j \in J}$ of Bessel generators satisfying the $1$-UCP are dual frame generators if and only if 
 \[
  \sum_{j \in J}  \frac{1}{\covol(\Gamma_j)} \overline{\hat{g}_j(\omega)} \hat{h}_j(\omega) = 1~,  
 \]
and
 \[
  \forall j \in J \forall \alpha \in \Gamma_j^\perp \setminus \{ 0 \}~:~ \overline{\hat{g}_j(\omega)} \hat{h}_{j}(\omega + \alpha) = 0~,
 \] for almost all $\omega \in \ghat$. \label{item:ind-lat-dual}
  \end{enumerate}
 \end{theorem}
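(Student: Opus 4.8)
The plan is to handle the two parts separately: part (ii) will be a short deduction from Theorem~\ref{thm:meta_convergence_AP}, whereas part (i) carries the real work and is where independence is genuinely used.

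For part (i), the direction ``uniform convergence $\Rightarrow w_{f;g}\in L^\infty(G)$'' is immediate, since a uniform limit of the continuous partial sums is bounded. For the converse I would exploit the periodicity of the summands. By Lemma~\ref{lem:wfj-convergence} each $w_{f;g,j}$ is a trigonometric polynomial with frequencies in $\Gamma_j^\bot=(G/\Gamma_j)^\wedge$, hence continuous, nonnegative, and $\Gamma_j$-periodic; it therefore descends to a continuous function $\tilde w_j$ on the compact quotient $G/\Gamma_j$, attaining a finite maximum there. Writing $\mathrm{pr}_j\colon P:=\prod_{j\in J}G/\Gamma_j\to G/\Gamma_j$ for the coordinate projections and $\Phi\colon G\to P$, $\Phi(x)=(x+\Gamma_j)_{j\in J}$, each finite partial sum $W_{J'}:=\sum_{j\in J'}\tilde w_j\circ\mathrm{pr}_j$ is continuous on the compact group $P$ and satisfies $W_{J'}\circ\Phi=\sum_{j\in J'}w_{f;g,j}$.

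The key step is a boundedness transfer. Since $0\le\sum_{j\in J'}w_{f;g,j}\le w_{f;g}\le M:=\norm[\infty]{w_{f;g}}$ almost everywhere and the finite partial sum is continuous (so $\le M$ on the full-measure, hence dense, set forces $\le M$ everywhere on $G$), we get $W_{J'}\le M$ on $\Phi(G)$. By Lemma~\ref{lem:independent_dense}, independence of the dual lattices makes $\Phi(G)$ dense in $P$, so continuity yields $W_{J'}\le M$ on all of $P$. Because the summands depend on distinct, independent coordinates and are nonnegative, each may be maximized separately, giving $\sup_P W_{J'}=\sum_{j\in J'}\max_{G/\Gamma_j}\tilde w_j\le M$. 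Letting $J'$ exhaust $J$ shows $\sum_{j\in J}\max_{G/\Gamma_j}\tilde w_j\le M<\infty$, a convergent series of nonnegative numbers, so its tails vanish; the pointwise bound $w_{f;g}(x)-\sum_{j\in J'}w_{f;g,j}(x)=\sum_{j\notin J'}w_{f;g,j}(x)\le\sum_{j\notin J'}\max_{G/\Gamma_j}\tilde w_j$ then delivers uniform convergence on $G$. The ``in particular'' claim follows at once: a Bessel family satisfies $w_{f;g}(x)=\sum_{j,\gamma}\abs{\langle T_xf,T_\gamma g_j\rangle}^2\le B_g\norm{f}^2$, so $w_{f;g}\in L^\infty(G)$ for every $f\in\cD$, and the equivalence yields $\infty$-UCP (independently of the blind spot $E$).

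For part (ii), the hypotheses (two Bessel families each satisfying the $1$-UCP) let me invoke Theorem~\ref{thm:meta_convergence_AP}, after promoting to the dual $1$-UCP via Lemma~\ref{lem:UCP_single}(b); the only delicate point is verifying $w_{f;g,h}\in\AP(G)$, which I would obtain from the $1$-UCP of the two diagonal systems together with the Cauchy--Schwarz bound $\abs{w_{f;g,h,j}}\le w_{f;g,j}^{1/2}w_{f;h,j}^{1/2}$. Theorem~\ref{thm:meta_convergence_AP} then says the families are dual frame generators iff the characterizing equations~\eqref{eq:t-alpha} hold, i.e. $t_\alpha(\omega)=\delta_{\alpha,0}$ for all $\alpha\in\bigcup_j\Gamma_j^\bot$. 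Pairwise independence means $\Gamma_j^\bot\cap\Gamma_k^\bot=\{0\}$ for $j\neq k$, so every $\alpha\neq0$ lies in at most one $\Gamma_j^\bot$ and the sum defining $t_\alpha$ collapses to a single term. Hence the case $\alpha=0$ reads exactly $\sum_j\frac{1}{\covol(\Gamma_j)}\overline{\hat g_j(\omega)}\hat h_j(\omega)=1$, while for each $j$ and each $\alpha\in\Gamma_j^\bot\setminus\{0\}$ it reduces, after cancelling the positive factor $\covol(\Gamma_j)^{-1}$, to $\overline{\hat g_j(\omega)}\hat h_j(\omega+\alpha)=0$; the converse is immediate by reassembling $t_\alpha$. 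I expect the main obstacle to be the boundedness-transfer step in part (i)—upgrading an almost-everywhere bound to a pointwise bound on $P$ and splitting $\sup_P W_{J'}$ as $\sum_{j\in J'}\max\tilde w_j$—since this is exactly where density of $\Phi(G)$ is indispensable and what rules out the pathology of Example~\ref{ex:UCP_LIC}, in which the dual lattices are far from independent and boundedness fails to force uniform convergence.
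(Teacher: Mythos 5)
Your proposal is correct and follows essentially the same route as the paper: part (i) rests on exactly the inequality $\sum_{j\in J'}\normsmall[\infty]{w_{f;g,j}}\le\normsmall[\infty]{w_{f;g}}$ obtained from the density statement of Lemma~\ref{lem:independent_dense} (the paper realizes it by picking, for fixed $\epsilon$, a single point simultaneously $\epsilon$-close to all the maxima, while you transfer the bound to the product group $\prod_j G/\Gamma_j$ and maximize coordinatewise --- the same idea in different clothing), and part (ii) is, as in the paper, just the observation that pairwise independence of the dual lattices collapses each sum $t_\alpha$, $\alpha\neq0$, to a single term in the characterizing equations of Theorem~\ref{thm:meta_convergence_AP}. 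If anything, your handling of the a.e.-to-everywhere upgrade and of the promotion to the dual $1$-UCP via Lemma~\ref{lem:UCP_single}(b) is slightly more explicit than the paper's.
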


 \begin{proof}
Fix $f \in \cD$. If $\infty$-UCP holds, then $w_{f;g} \in \AP(G) \subset
L^\infty(G)$. To finish the proof of \eqref{item:ind-lat-UCP}, we need to show that
  \begin{equation} \label{eqn:wf1sum}
 w_{f;g} = \sum_{j \in J} w_{f;g,j} 
\end{equation} converges uniformly. 
In fact, we will show that 
\begin{equation} \label{eqn:abs_conv}
 \sum_{j \in J} \| w_{f;g,j} \|_\infty < \infty~.
\end{equation} 

For this purpose,  fix $0< \epsilon < 1$ and a finite, nonempty set $J' \subset J$. 
Each $w_{f;g,j}$ induces a continuous function on the compact group $G/\Gamma_j$, hence there exists an open set $M_j \subset G$ such that 
\[
\forall j \in J'~,\forall y \in M_j + \Gamma_j~:~w_{f;g,j}(y) \ge (1-\epsilon) \| w_{f;g,j} \|_\infty~. 
\]
By the independence assumption on the dual lattices and
Lemma~\ref{lem:independent_dense}, there exists $x \in \cap_{j \in J'}
M_j + \Gamma_j$. Since all $w_{f;g,j}$ are positive, and their sum is
pointwise bounded by the $\norm[\infty]{w_{f;g}}$, we get 
\[
 \norm[\infty]{w_{f;g}} \ge \sum_{j \in J'} w_{f;g,j}(x) \ge \sum_{j \in J'} (1-\epsilon)  \| w_{f;g,j} \|_\infty~.
\]
Since $0 < \epsilon < 1$ and $J' \subset J$ were chosen arbitrary, (\ref{eqn:abs_conv}) is shown, and thus part \eqref{item:ind-lat-UCP}.

For the remainder of the proof, it is enough to observe that the characterizing equations from Theorem~\ref{thm:meta_convergence_AP}, which are applicable by the $1$-UCP assumption, simplify to the form given in \eqref{item:ind-lat-dual}, when the dual lattices are pairwise independent. 
\end{proof}

The point of the following result is that if a family of lattices has pairwise independent dual lattices, and there exist dual frame generators  $(g_j)_{j \in J},(h_j)_{j \in J}$ , then the somewhat simple-minded procedure from Theorem \ref{thm:shannon} will also provide such generators. 
\begin{corollary} \label{cor:pw_independent_shannon}
Assume that $(\Gamma_j)_{j \in J}$ is a family of lattices with pairwise independent dual lattices. If there exist dual frame generators  $(g_j)_{j \in J},(h_j)_{j \in J}$ satisfying the $1$-UCP, then there exist Borel sets
$(K_j)_{j \in J}$ satisfying 
 \begin{enumerate}[(i)]
  \item $\mu_{\ghat}(K_j \cap \gamma + K_j) =0$, for all $\gamma
    \in \Gamma_j^\bot \setminus \{ 0 \}$ and for all $j \in J$, 
  \item $\mu_{\ghat}(\ghat \setminus \bigcup_{j \in J}
    K_j) = 0$.  
 \end{enumerate}
\end{corollary}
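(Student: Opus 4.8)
The plan is to feed the assumed existence of dual frame generators into Theorem~\ref{thm:rat_ind}\eqref{item:ind-lat-dual} and then read off the sets $K_j$ directly from the supports of the generators. The hypotheses of that theorem are exactly the ones at hand: the dual lattices are pairwise independent, and $(g_j)_{j\in J},(h_j)_{j\in J}$ are Bessel (being dual frame generators) and satisfy the $1$-UCP. Hence Theorem~\ref{thm:rat_ind}\eqref{item:ind-lat-dual} yields the two almost-everywhere identities
\[
 \sum_{j \in J}  \frac{1}{\covol(\Gamma_j)} \overline{\hat{g}_j(\omega)} \hat{h}_j(\omega) = 1
\]
and $\overline{\hat{g}_j(\omega)} \hat{h}_{j}(\omega + \alpha) = 0$ for every $j \in J$ and every $\alpha \in \Gamma_j^\bot \setminus \{0\}$.

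With these in hand I would fix Borel representatives of the functions $\hat{g}_j,\hat{h}_j$ and set
\[
 K_j = \setprop{\omega \in \ghat}{\hat{g}_j(\omega) \neq 0 \text{ and } \hat{h}_j(\omega) \neq 0}~,
\]
which is then a Borel set. Property~(ii) follows from the first identity: wherever it holds, the value $1$ is nonzero, so at least one summand $\overline{\hat{g}_j(\omega)}\hat{h}_j(\omega)$ is nonzero, which forces both $\hat g_j(\omega)\neq 0$ and $\hat h_j(\omega)\neq 0$, i.e.\ $\omega \in K_j$ for that $j$; thus $\ghat \setminus \bigcup_{j} K_j$ sits inside the null set on which the identity fails.

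Property~(i) comes from the second identity. Fixing $j$ and $\gamma \in \Gamma_j^\bot \setminus \{0\}$, I would show that $\omega \in K_j \cap (\gamma + K_j)$ is incompatible, up to a null set, with that identity: writing $\omega' = \omega - \gamma$, the condition $\omega - \gamma \in K_j$ gives $\hat{g}_j(\omega') \neq 0$, while $\omega \in K_j$ gives $\hat{h}_j(\omega' + \gamma) = \hat{h}_j(\omega) \neq 0$, so that $\overline{\hat{g}_j(\omega')}\hat{h}_j(\omega' + \gamma) \neq 0$. Applying the second identity with $\alpha = \gamma$ shows the set of such $\omega'$ is null, and translation invariance of $\mu_{\ghat}$ then yields $\mu_{\ghat}(K_j \cap (\gamma + K_j)) = 0$.

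I do not expect a genuine obstacle here, since Theorem~\ref{thm:rat_ind} already performs the substantive work of translating the dual-frame property into pointwise support conditions, and the pairwise independence is precisely what removes the cross terms between distinct lattices. The only points that need care are the passage to honest Borel representatives of the $L^2$-functions $\hat{g}_j,\hat{h}_j$ (so that the $K_j$ are Borel and not merely Lebesgue measurable), and keeping track of the asymmetric roles of $\hat{g}_j$ (evaluated at $\omega'$) and $\hat{h}_j$ (evaluated at $\omega' + \gamma$) in the translation step establishing property~(i).
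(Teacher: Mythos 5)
Your proposal is correct and follows essentially the same route as the paper: the paper's proof simply invokes Theorem~\ref{thm:rat_ind}\eqref{item:ind-lat-dual} and sets $K_j = \setprop{\omega \in \ghat}{\hat{g}_j(\omega)\hat{h}_j(\omega) \neq 0}$, which is the same set you define. Your writeup merely supplies the routine verifications of (i) and (ii) that the paper leaves implicit, and these are carried out correctly.
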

\begin{proof}
 This follows from the characterizing equations in Theorem \ref{thm:rat_ind}\eqref{item:ind-lat-dual}, if we let 
 \[
  K_j = \{ \omega \in \ghat : \hat{g}_j(\omega) \hat{h}_j(\omega) \not= 0\}
 \] 
for each $j \in J$
\end{proof}

 If one restricts further to orthonormal basis generators, the characterizing equations become even more stringent. 
 \begin{corollary}
\label{cor:ind-lat-char-fun}
 Let $\mathcal{G} = (\Gamma_j)_{j \in J}$ denote a system of lattices whose dual lattices are pairwise independent.  
 Let $(g_j)_{j \in J}$ an associated system of orthonormal basis generators fulfilling $1$-UCP. Then
  \[
  | \hat{g}_j| = c_{j}^{1/2} \mathbf{1}_{K_j} 
  \] where $K_j$ is a measurable fundamental domain mod $\Gamma_j^\bot$, and, up to sets of measure zero,  
  \[
   \ghat = \bigcupdot_{j \in J} K_j~.
  \]  
 \end{corollary}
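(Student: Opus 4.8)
The plan is to feed the orthonormal basis into the characterizing equations of Theorem~\ref{thm:rat_ind}\eqref{item:ind-lat-dual} and then sharpen them using the orthonormality of each individual subsystem. An orthonormal basis is a Parseval frame that is dual to itself, so \gsi\ serves as its own dual family of generators, the Bessel and $1$-UCP hypotheses being satisfied. Applying Theorem~\ref{thm:rat_ind}\eqref{item:ind-lat-dual} with $h_j = g_j$ therefore yields, for a.e.\ $\omega \in \ghat$,
\[
\sum_{j \in J} \frac{1}{\covol(\Gamma_j)} \abssmall{\hat{g}_j(\omega)}^2 = 1
\qquad\text{and}\qquad
\overline{\hat{g}_j(\omega)}\,\hat{g}_j(\omega + \alpha) = 0 \ \text{ for } \alpha \in \Gamma_j^\perp \setminus \{0\}.
\]
Setting $K_j = \setprop{\omega \in \ghat}{\hat{g}_j(\omega) \neq 0}$, the second relation says precisely that $K_j$ \emph{packs} $\Gamma_j^\perp$, i.e.\ the translates $(K_j + \alpha)_{\alpha \in \Gamma_j^\perp}$ are pairwise disjoint up to null sets.

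Next I would extract the modulus and the tiling property. Since \gsi\ is an orthonormal basis, each subsystem $\set{T_\gamma g_j}_{\gamma \in \Gamma_j}$ is an orthonormal system, which by the standard Fourier criterion (Kluván{}ek's theorem, as used in the proof of Theorem~\ref{thm:shannon}) is equivalent to the periodization identity $\sum_{\alpha \in \Gamma_j^\perp} \abssmall{\hat{g}_j(\omega + \alpha)}^2 = \covol(\Gamma_j)$ for a.e.\ $\omega$. I would then argue along a.e.\ $\Gamma_j^\perp$-orbit: the packing condition forces at most one orbit point to lie in $K_j$, while the periodization, being the strictly positive constant $\covol(\Gamma_j)$, forces at least one. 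Hence exactly one orbit point lies in $K_j$; this shows simultaneously that $K_j$ is a measurable fundamental domain modulo $\Gamma_j^\perp$ and that $\abssmall{\hat{g}_j(\omega)}^2 = \covol(\Gamma_j)$ for a.e.\ $\omega \in K_j$. Consequently $\abssmall{\hat{g}_j} = \covol(\Gamma_j)^{1/2} \mathbf{1}_{K_j}$, so the constant appearing in the statement is $c_j = \covol(\Gamma_j)$.

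For the disjoint covering, I would substitute $\abssmall{\hat{g}_j}^2 = \covol(\Gamma_j)\,\mathbf{1}_{K_j}$ into the first relation above; it collapses to $\sum_{j \in J} \mathbf{1}_{K_j}(\omega) = 1$ for a.e.\ $\omega$, which forces a.e.\ $\omega$ to belong to exactly one $K_j$, i.e.\ $\ghat = \bigcupdot_{j \in J} K_j$ up to a null set. The main obstacle is the measure-theoretic bookkeeping in the middle step: both the packing relation and the periodization identity are \emph{a.e.} pointwise statements in $\omega$, and transporting them into a statement valid along a.e.\ $\Gamma_j^\perp$-orbit requires a Fubini argument over the quotient $\ghat/\Gamma_j^\perp$. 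Everything else reduces to direct substitution.
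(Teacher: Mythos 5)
Your proof is correct and follows essentially the same route as the paper: both apply the characterizing equations of Theorem~\ref{thm:rat_ind}(ii) to the self-dual system and use orthonormality of each layer $\set{T_\gamma g_j}_{\gamma\in\Gamma_j}$ to pin down $\abs{\hat g_j}$ and the fundamental-domain property of $K_j$. The only (harmless) divergence is at the end: you obtain the essentially disjoint covering $\ghat=\bigcupdot_{j\in J} K_j$ by substituting $\abs{\hat g_j}^2=\covol(\Gamma_j)\mathbf{1}_{K_j}$ into the $\alpha=0$ equation $t_0\equiv 1$, whereas the paper deduces disjointness from pairwise orthogonality of the subspaces $\mathcal{H}_j$ and the covering from completeness.
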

\begin{proof}
 Let $K_j = \hat{g}_j^{-1}(\mathbb{C}\setminus \{ 0 \})$. Then
 Theorem~\ref{thm:rat_ind}\eqref{item:ind-lat-dual} implies that, up
 to a set of measure zero, the set $K_j$ is contained in a fundamental domain modulo $\Gamma_j^\bot$.  Now the fact that the $\Gamma_j$-shifts of $g_j$ are an orthonormal system forces $K_j$ to have measure $1/\covol(\Gamma_j)$, and that $ | \hat{g}_j| = c_{j}^{1/2} \mathbf{1}_{K_j} $, with $c_j = \covol(\Gamma_j)$. Thus the $\Gamma_j$-shifts of $g_j$ are an orthonormal basis of 
 \[
  \mathcal{H}_j = \{ f \in L^2(G) :  \hat{f} \cdot \mathbf{1}_{K_j} = \hat{f} \}~. 
 \]
 The assumption that the full system $(T_\gamma g_j)_{j,\gamma}$ is
 orthonormal therefore forces the $\mathcal{H}_{j}$ to be pairwise orthogonal, and thus the $K_j$ to be essentially disjoint. Finally, it is clear that completeness of the system forces $\ghat = \bigcup_{j \in J} K_j$ up to sets of measure zero.
\end{proof}

As a further application of Theorem \ref{thm:rat_ind}, we now construct an example
of a lattice family in dimension two with infinite bandwidth, but without dual frame generators. 

\begin{example} \label{ex:counter_inf_bw}
 Fix a transcendental number $c>1$, and let $\Gamma_j = C_j \mathbb{Z}^2$, where 
\[ 
C_j =
\begin{pmatrix}
c^{-j} & 0 \\
 0 & c^{j}  
\end{pmatrix}
 \quad  
\text{for $j \in \mathbb{N}$.} 
 \]
Hence $\sum_{j \in \mathbb{N}} \frac{1}{\covol(\Gamma_j)} = \infty$, but there do not exist families of dual generators for this system. 
To see this, assume otherwise. Note that by choice of $c$, the dual
lattices are independent, hence Theorem
\ref{thm:rat_ind}\eqref{item:ind-lat-UCP} implies that the dual
generators fulfill $\infty$-UCP. Hence Corollary
\ref{cor:pw_independent_shannon} applies and yields Borel sets
$(K_j)_{j \in \mathbb{N}}$ with $\lambda(\mathbb{R}^2 \setminus
\bigcup_{j \in \mathbb{N}} K_j) = 0$ and $\lambda(K_j \cap (\gamma +
K_j)) = 0$ for all $j \in \mathbb{N}$ and all $\gamma \in
\Gamma_j^\bot = c^j \mathbb{Z} \times c^{-j} \mathbb{Z}$, where
$\lambda$ denotes the Lebesgue measure on $\R^2$. 

Without loss of generality, we may assume that $K_j \cap \gamma + K_j = \emptyset$, for all $j \in \mathbb{N}$ and $\gamma \in \Gamma_j^\bot$. Define, for $j \in \mathbb{N}$ and $x \in \mathbb{R}$, the Borel set
 \[
  G_{j,x} = \{ y \in \mathbb{R}: (x,y) \in K_j \}~.
 \] Assume that there exists $k \in \mathbb{Z} \setminus \{ 0 \}$ such that $G_{j,x} \cap (G_{j,x} +c^{-j} k) \not= \emptyset$. Then there exists $y \in \mathbb{R}^n$ such that $(x,y) \in K_j$ and $(x,y-c^{-j}k) \in K_j$. Hence $(x,y) \in K_j \cap ((0,c^{-j}k) + K_j)$, and $(0,c^{-j}k) \in \Gamma_j^\bot$, which contradicts our assumption on the $K_j$.
 
 Hence $G_{j,x}$ is contained in a fundamental domain mod $c^{-j} \mathbb{Z}$, which entails $\lambda(G_{j.x})  \le c^{-j}$. 
 Now, let us assume that $\mathbb{R}^2 \subset \bigcup_{j \in \mathbb{N}} K_j$, up to a null set. We then get
 \begin{eqnarray*}
  \infty  & = &  \int_0^1 \int_{\mathbb{R}} 1 dydx \\
  & = &  \int_0^1 \int_{\mathbb{R}}  \sum_{j \in \mathbb{N}} \mathbf{1}_{K_j} (x,y) dy dx \\
  & = & \sum_{j=1}^\infty \int_0^1 \underbrace{\lambda(G_{j,x})}_{\le c^{-j}} dx \\
  & \le & \sum_{j=1}^\infty c^{-j} < \infty ~,
 \end{eqnarray*}
which is the desired contradiction. 
\end{example}

\begin{remark}
 The results in this section align nicely with results from wavelet analysis. For example, Corollary~\ref{cor:ind-lat-char-fun} is related to so-called MSF wavelets. Such wavelets $\psi$ are characterized by the property that $|\hat{\psi}|$ is, up to scalar multiplication, given by the characteristic function of a Borel set. It was shown by Chui and Shi in \cite{MR1793418}, that whenever the dilation $a>1$ is such that all integer powers of $a$ are irrational, every orthonormal wavelet associated to $a$ must be an MSF wavelet. Corollary~\ref{cor:ind-lat-char-fun}, applied to the family $\Gamma_j = a^j \mathbb{Z}$, for $j \in \mathbb{Z}$, provides this answer under the strictly stronger assumption that $a$ is transcendental (which is equivalent to independence of the dual lattices). Note however that here our corollary also provides a stronger conclusion, since the generators $(g_j)_{j \in \mathbb{Z}}$ are not assumed to be dilates of a single mother wavelet. 
 
 But also Theorem~\ref{thm:rat_ind}\eqref{item:ind-lat-UCP} and its proof have a precedent in wavelet analysis. Note that the proof of the Theorem yields 
 \begin{equation} \label{eqn:sum_bds}
  \| w_{f;g,h} \|_{\infty} = \sum_{j \in J} \| w_{f;g,h,j} \|_{\infty}~.
 \end{equation} This phenomenon is related to the question how to
 estimate frame bounds of the full system $(T_\gamma g_j)_{\gamma \in
   \Gamma_j,j \in J}$ from the bounds of the individual layers 
 $(T_\gamma g_j)_{\gamma \in \Gamma_j}$ indexed by $j \in J$, which was investigated for wavelet
 systems with transcendental dilations in \cite{MR2280189}. Indeed,
 \gsi\ is a Bessel system with optimal bound $B^\dagger$ precisely
 when 
\[ 
  B^\dagger = \sup_{f \in \cD, \norm{f}=1} \sum_{j \in J}
  \max_{x\in G}{w_{f;g,j}(x)}<\infty;
\]
which furthermore is a frame with optimal lower bound $A^\dagger$ if
and only if
\[ 
  A^\dagger = \inf_{f \in \cD, \norm{f}=1} \sum_{j \in J} \min_{x\in G}{w_{f;g,j}(x)}>0.
\]
 These estimates should be compared to \eqref{eq:wf-opt-bounds-B} and \eqref{eq:wf-opt-bounds-A}; 
 they can be viewed as a generalization of  \cite[Theorem
 2.1]{MR2280189}.
\end{remark}

First an example concerning
perturbation (in)stability of this property. Indeed, the existence of
normalized tight frame generators is \emph{not} robust with respect to
arbitrarily small perturbations of the lattice generators.

\begin{example} \label{exa:robust}
Consider $G =\R$ with the Lebesgue measure.
 Consider the system $\mathcal{G} = (2^j \Z)_{j \in \N}$, and let $(\epsilon_j)_{j \in \N}$ be an arbitrary sequence of strictly positive numbers. Pick a sequence $(c_j)_{j \in \mathbb{N}} \subset \R$ with $|2^j - c_j|<\min(\epsilon_j,1)$, and the additional property that $(1/c_j)_{j \in J}$ is $\Q$-linearly independent (this is easily done inductively). Then Theorem~\ref{thm:ONB_fbw} yields a system of tight frame (even orthonormal basis) generators associated with $\mathcal{G}$. However, for the perturbed lattice system $\mathcal{G}' = (c_j \Z)_{j \in \N}$, we can estimate $BW(\mathcal{G}') \le 2$, hence Theorem~\ref{thm:rat_ind} shows that no generators can exist for $\mathcal{G}'$. 
\end{example}

A question that is somewhat similar to the notion of refinements of lattice families is whether the existence of frame generators is robust with respect to enlarging each lattice in the family individually. At first glance, this may seem like a reasonable conjecture; after all, enlarging the lattices leads to systems with more redundancy (and larger bandwidth), which should make frame construction easier. However, this intuition is generally misleading, as the following example shows.

\begin{example} \label{exa:increase}
Consider $G = \Z$ with the counting measure. Fix a family $(c_j)_{j
  \in \mathbb{N}}$ of pairwise prime integers such that $\sum_{j \in
  \mathbb{N}} c_j^{-1} <1 = \mu_{\ghat}(\ghat)$. Then the tight
lattices are independent. Hence, there does not exist a family of dual
frame generators in $\ell^2(\Z)$ for the lattices $\Gamma_j = c_j \Z$
by Theorem~\ref{thm:rat_ind}\eqref{item:ind-lat-UCP} and Theorem~\ref{thm:density-GTI}. On the other hand, if we let
\[
\Lambda_j = \bigcap_{i \le j} \Gamma_i
\] we obtain a strictly decreasing family of lattices. By the proof of Theorem \ref{thm:ONB_fbw}, there is a system of dual frame generators for the $\Lambda_j$, and $\Lambda_j \subset \Gamma_j$ holds for all $j \in \mathbb{N}$.  Thus increasing the lattices can have a \emph{negative} impact on the availability of tight frame generators.
\end{example}

\section{Applications and extensions }
\label{sec:appl-extens-}

We end this paper with further discussions of the necessary
conditions for the frame property in Section~\ref{sec:necess-cond}. 

Intuitively, the Calder\'on sum $ \sum_{j \in J}\tfrac{1}{\covol(\Gamma_{\!j})}
     \abs{\hat{g}_j(\cdot)}^2 $ measures the total energy
concentration of the generators $g_j$ in the frequency domain. If the
Calder\'on sum is zero on some domain in
frequency, then clearly none of the frequencies in this domain can be represented
by the corresponding GSI system. In other words, the corresponding GSI
system is not complete/total. Furthermore, whenever a GSI system
has the frame property, which is a stronger assumption than the spanning property,
one would even expect the Calder\'on sum to be bounded uniformly from below since
the GSI frame can reproduce all frequencies in a \emph{stable} way.

However, as we saw in Theorem~\ref{thm:a-lic-calderon-bounds} and
Example~\ref{ex:UCP_LIC}, this is again a situation where our
intuition only holds true if we assume the $1$-UCP. Under the $1$-UCP, the
Calder\'on sum of a GSI frame
$\gsi$  with bounds $A_g$ and $B_g$ takes values in
$\itvcc{A_g}{B_g}$, that is,
\[
     A_g \le \sum_{j\in J}  \frac{1}{\covol(\Gamma_{\!j})}
     \abs{\hat{g}_j(\omega)}^2  \le B_g \quad \text{for a.e. }  \omega \in \ghat.
\]  
Without the $1$-UCP, the best one can say is that 
\[
     0 < \sum_{j\in J}  \frac{1}{\covol(\Gamma_{\!j})}
     \abs{\hat{g}_j(\omega)}^2  \le B_g \quad \text{for a.e. }  \omega \in \ghat.
\]

As mentioned, the terminology ``Calder\'on sum'' comes from wavelet
analysis. Let us show that our results on GSI systems extends known
results in wavelet analysis. Fix an $n \times n$ matrix $A \in
\mathrm{GL}_n(\R)$ and a full-rank lattice $\Gamma \subset \R^n$. The wavelet system $\seq{D_{A^j}T_{\gamma}
  \psi }_{j \in \Z, \gamma \in \Gamma}$, where  $D_{A^j}T_{\gamma}
  \psi=\abs{\det{A}}^{j/2} \psi(A\cdot -\gamma)$, can be written as a
  GSI system in the following \emph{standard form}:
 \[ 
 J=\Z, \quad \Gamma_j = A^{-j} \Gamma, \quad g_j=D_{A_j} \psi , \quad
 \text{for all } j \in \Z.
\]
The Calder\'on sum then reads $\sum_{j \in \Z} \abssmall{\hat{\psi}(A^j \cdot)}^2$.
It is a classical result by Chui and Shi~\cite{MR1199539} that for
univariate frame wavelets
 ($n=1$, $A=a$) with bounds $C_1$ and $C_2$, it holds
\[
     C_1 \le \sum_{j \in \Z} \abssmall{\hat{\psi(a^j \omega)}}^2 \le C_2 \quad \text{for a.e. }  \omega \in \R.
\]  
In wavelet analysis the case $n=1$ is special: it is the only
dimension where the LIC/UCP automatically holds once we assume local
integrability in $\R^n \setminus \{0\}$ of the Calder\'on sum. Hence, for univariate wavelets the issue of LIC/UCP
is, in most cases, completely
absent. 

\begin{theorem}
  Let $A \in \mathrm{GL}_n(\R)$, $\abs{\det{A}}>1$, let
  $\LG \subset \R^n$ be a full-rank lattice, and let $L$ be an at most
  countable index set. 
 Suppose that $(A^T,\Gamma^\perp)$ satisfies the lattice counting estimate,
 that is,
\[    \#\abs{ \LG^\perp \cap (A^T)^j(B(0,r)) } \le C \max (1, \abs{\det{A}}^j)
\qquad\text{for all }j\in \Z.
\]
 If the wavelet system $\seq{D_{A^j}T_{\gamma}
  \psi_\ell }_{\ell \in L, j \in \Z, \gamma \in \Gamma}$ is a frame
with bounds $C_1$ and $C_2$, then
\[
     C_1 \le \sum_{\ell \in L}\sum_{j \in \Z} \abssmall{\hat{\psi}_\ell(A^j \omega)}^2 \le C_2 \quad \text{for a.e. }  \omega \in \R^n.
\]  
\end{theorem}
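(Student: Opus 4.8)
The plan is to recognize the wavelet frame as a GSI system in standard form and then invoke the machinery of Section~\ref{sec:necess-cond}. Concretely, I would set $J=L\times\Z$, $\Gamma_{(\ell,j)}=A^{-j}\Gamma$ and $g_{(\ell,j)}=D_{A^j}\psi_\ell$, so that $\covol(\Gamma_{(\ell,j)})=\abs{\det A}^{-j}\covol(\Gamma)$ and $\Gamma_{(\ell,j)}^\perp=(A^T)^j\Gamma^\perp$, while a short Fourier-transform computation gives $\hat g_{(\ell,j)}(\omega)=\abs{\det A}^{-j/2}\hat\psi_\ell((A^T)^{-j}\omega)$. With this identification the GSI Calderón sum $\sum_{\ell,j}\tfrac{1}{\covol(\Gamma_{(\ell,j)})}\abs{\hat g_{(\ell,j)}(\omega)}^2$ equals $\tfrac{1}{\covol(\Gamma)}\sum_{\ell,j}\abs{\hat\psi_\ell((A^T)^{-j}\omega)}^2$, which after the harmless reindexing $j\mapsto-j$ and the usual normalization is the wavelet Calderón sum in the statement.

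The upper bound is then immediate and needs no regularity hypothesis: since the wavelet frame is in particular a Bessel system with bound $C_2$, Lemma~\ref{lem:calderon_upper} gives $\sum_{\ell,j}\tfrac{1}{\covol(\Gamma_{(\ell,j)})}\abs{\hat g_{(\ell,j)}(\omega)}^2\le C_2$ for a.e.\ $\omega$. For the lower bound I would apply Theorem~\ref{thm:a-lic-calderon-bounds} to the recast system, obtaining $C_1\le\sum_{\ell,j}\tfrac{1}{\covol(\Gamma_{(\ell,j)})}\abs{\hat g_{(\ell,j)}(\omega)}^2$ for a.e.\ $\omega$ --- but only once we have verified that the system satisfies the $1$-UCP. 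Thus the entire theorem reduces to checking the $1$-UCP for the standard-form system.

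To establish the $1$-UCP I would verify the (stronger) classical local integrability condition, which by Remark~\ref{rem:ucp_vs_lic} implies $\infty$-UCP and hence $1$-UCP. Fix $f\in\cD$ with $\hat f$ supported in a compact annulus $\{\rho\le\abs{\omega}\le R\}\subset\ghat\setminus E$ (the natural blind spot for wavelets being $E=\{0\}$), and examine $\sum_{\ell,j,\alpha}\abs{\tilde c_{(\ell,j),\alpha}}$ with $\alpha\in(A^T)^j\Gamma^\perp$. I would split this according to $\alpha=0$ and $\alpha\neq0$. The $\alpha=0$ contribution equals $\int\abs{\hat f(\omega)}^2\cdot(\text{Calder\'on sum})\,d\omega$, hence is finite by the upper bound just proved. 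For $\alpha\neq0$ the integrand is supported where both $\omega$ and $\omega+\alpha$ lie in $\supp\hat f$, forcing $\alpha\in(A^T)^j\Gamma^\perp\cap B(0,2R)$; the lattice counting estimate, applied with exponent $-j$, bounds the number of such $\alpha$ at scale $j$ by $C\max(1,\abs{\det A}^{-j})$. Collecting terms, the $\alpha\neq0$ part is dominated, up to constants depending on $\norm[\infty]{\hat f}$ and $\covol(\Gamma)$, by $\sum_{\ell,j}\max(1,\abs{\det A}^{-j})\int_{\supp\hat f}\abs{\hat\psi_\ell((A^T)^{-j}\omega)}^2\,d\omega$.

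The hard part will be showing that this scale sum is finite, and it is precisely here that the counting estimate must be used in full. For the scales with $\abs{\det A}^j\ge1$ the weight is $1$, and after interchanging sum and integral the inner sum is again bounded by the Calderón sum, so this block is controlled by the upper bound. For the remaining scales the counting estimate permits up to $\abs{\det A}^{-j}$ translations, and a change of variables converts the corresponding terms into integrals of $\sum_\ell\abs{\hat\psi_\ell}^2$ over the expanded annuli $(A^T)^{-j}(\supp\hat f)$; the counting hypothesis on $(A^T,\Gamma^\perp)$ is exactly what furnishes the bounded-overlap and summability required to absorb the growing weight against the $L^2$-mass of the generators. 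Quantifying this interplay between the lattice geometry and the support of $\hat f$, so that the contractive block of scales sums to a finite quantity, is the technical core of the argument; once it is in place the $1$-UCP holds, and the theorem follows directly from Lemma~\ref{lem:calderon_upper} and Theorem~\ref{thm:a-lic-calderon-bounds}.
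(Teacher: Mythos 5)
Your overall strategy coincides with the paper's: rewrite the wavelet system in GSI standard form, get the upper bound from Lemma~\ref{lem:calderon_upper}, establish the LIC (hence $1$-UCP by Remark~\ref{rem:ucp_vs_lic}), and conclude with Theorem~\ref{thm:a-lic-calderon-bounds}. The difference is that the paper does \emph{not} prove the LIC directly: it invokes the main result of \cite{BownikNonexpanding2015}, which states precisely that the lattice counting estimate together with the boundedness of the Calder\'on sum forces the LIC. That implication is the substantive content of an entire separate paper, and it is exactly the step you label ``the technical core of the argument'' and leave open. So as written your proposal has a genuine gap at the only nontrivial point of the proof.

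Moreover, the mechanism you sketch for closing it does not work. After bounding $\abssmall{\hat f(\omega)\hat f(\omega+\alpha)}$ by $\normsmall[\infty]{\hat f}^2\mathbf{1}_K(\omega)$ and multiplying by the count $C\max(1,\abs{\det A}^{-j})$ of admissible $\alpha$, the contractive block reduces (after the change of variables) to
\[
\sum_{\ell}\sum_{j<0}\int_{(A^T)^{-j}K}\abssmall{\hat\psi_\ell(\xi)}^2\,d\xi
=\int_{\Rn} N(\xi)\sum_{\ell}\abssmall{\hat\psi_\ell(\xi)}^2\,d\xi,
\qquad N(\xi)=\#\setpropsmall{j<0}{(A^T)^{j}\xi\in K}.
\]
The lattice counting estimate says nothing about $N$: it counts points of $\Gamma^\perp$ in dilated balls, not overlaps of the dilated annuli $(A^T)^{-j}K$, and for non-expanding $A$ (which is the whole point of allowing general $A$ with $\abs{\det A}>1$) the function $N$ need not be bounded. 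Even when $N$ is bounded, the integral is not controlled by the hypotheses: the Calder\'on bound only gives the pointwise bound $\sum_\ell\abssmall{\hat\psi_\ell(\xi)}^2\le C_2\covol(\Gamma)$ and the weighted estimate $\sum_{j}\abs{\det A}^{j}\int_{(A^T)^{-j}K}\sum_\ell\abssmall{\hat\psi_\ell}^2\le C_2\covol(\Gamma)\abs{K}$, which is off from what you need by exactly the factor $\abs{\det A}^{-j}$ that the counting estimate introduced; and $\sum_\ell\normsmall{\psi_\ell}^2$ may be infinite for a countable family of Bessel generators. To make your route rigorous you must either reproduce the argument of \cite{BownikNonexpanding2015} or simply cite it, as the paper does.
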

\begin{proof}
  We consider the wavelet system as a GSI system in the standard form.
  By Lemma~\ref{lem:calderon_upper}, it holds that $\sum_{\ell \in L}\sum_{j \in \Z}
  \abssmall{\hat{\psi}_\ell(A^j \omega)}^2 \le C_2$ for a.e. $\omega \in \R^n$.
  Since $(A^T,\Gamma^\perp)$ satisfies the lattice counting estimate,
  this implies, by a result in \cite{BownikNonexpanding2015}, that the wavelet system satisfies that LIC. Since the
  LIC implies $1$-UCP, the result follows from Theorem~\ref{thm:a-lic-calderon-bounds}.
\end{proof}

The lattice counting estimate was introduced in
\cite{BownikNonexpanding2015}, where Bownik and the second named
author show that almost all wavelet
systems satisfy the lattice counting estimate. In particular, 
a dilation matrix $A$ that is \emph{expanding on a subspace} (i.e., matrices with eigenvalues bigger than
one in modulus, at least one strictly bigger, and eigenvalues of modulus one have Jordan blocks of order one) and any translation lattice $\Gamma\subset \R^n$ will satisfy the lattice counting estimate.
 
The proofs of the lower bound of wavelet frames and GSI frames for
$L^2(\R)$ in~\cite{MR1199539} and \cite{MR3627474}, respectively, are of similar nature, and they rely on the
fact that lattices $(c_j \Z)_{j \in \Z}$ in $\R$ has a natural
ordering. Indeed, one can
assume $c_j\le c_{j+1}$. This
is not the case in higher dimensions nor for general LCA groups, and
the mentioned proofs break down. Moreover, the proof of
Theorem~\ref{thm:a-lic-calderon-bounds} is conceptually much simpler
than the proofs in \cite{MR1199539,MR3627474} once the theory of almost
periodic functions of GSI systems is in place. In fact, our proof
extends to a larger class of systems, called \emph{generalized translation-invariant} (GTI) systems, introduced in
\cite{JakobsenReproducing2014}. GTI systems are continuous or
semi-continuous variants of GSI systems, and therefore encompass,
e.g., the continuous (and semi-continuous) wavelet, Gabor and shearlet transforms.
\begin{theorem}
\label{thm:calderon-GTI}
  Let $J$ be at most countable. $(T_\gamma g_{j,p})_{j \in J, p \in
    P_j,\gamma\in \Gamma_j}$ be a GTI system,
  where for each $j \in J$: $g_j \in L^2(G)$, $\Gamma_j \subset G$ is a
  co-compact subgroup (with some given Haar measure), and $P_j$ a
  measure space (satisfying the three standing
  assumptions in \cite{JakobsenReproducing2014}). Suppose $(T_\gamma
  g_{j,p})_{j \in J, p \in P_j,\gamma\in \Gamma_j}$ is a (continuous) frame with bounds
  $A_g$ and $B_g$ that satisfies $1$-UCP (with the
  straightforward modifications). Then
  \[
     A_g \le \sum_{j\in J}  \frac{1}{\covol(\Gamma_{\!j})}
     \abs{\hat{g}_j(\omega)}^2  \le B_g \quad \text{for a.e. }  \omega \in \ghat.
\]  
where $\covol(\Gamma_{\!j}):=\mu_{G / \Gamma_j} (G / \Gamma_j)$ for
each $j \in J$.
    \end{theorem}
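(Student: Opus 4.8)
The plan is to reproduce the proof of Theorem~\ref{thm:a-lic-calderon-bounds} essentially verbatim, after replacing the countable sums over the GSI layers by the mixed sum-and-integral structure of the GTI system. The upper bound requires no $1$-UCP assumption: it follows directly from the GTI analogue of Lemma~\ref{lem:calderon_upper}, established in \cite{JakobsenReproducing2014} under the standing assumptions, which gives $\sum_{j\in J}\frac{1}{\covol(\Gamma_j)}\abssmall{\hat{g}_j(\omega)}^2 \le B_g$ for a.e.\ $\omega$. Thus the substance lies entirely in the lower bound.

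For the lower bound, first I would fix $f \in \cD$ and form the associated function $w_{f;g}$, where now each layer function $w_{f;g,j}$ is obtained by integrating the GSI layer contribution over the parameter space $P_j$. The standing assumptions from \cite{JakobsenReproducing2014} guarantee that these layer functions are again trigonometric polynomials (the GTI analogue of Lemma~\ref{lem:wfj-convergence}), with constant term $d_{j,0}=\frac{1}{\covol(\Gamma_j)}\int_{\ghat}\abssmall{\hat{f}(\omega)}^2 \int_{P_j}\abssmall{\hat{g}_{j,p}(\omega)}^2\,d\mu_{P_j}(p)\,d\omega$. Summing over $j$ recovers $\int_{\ghat}\abssmall{\hat{f}(\omega)}^2\, t_0(\omega)\,d\omega$, where $t_0$ denotes the Calder\'on sum of the GTI system. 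Under the $1$-UCP (with the straightforward modifications), $w_{f;g}\in\AP(G)$ and the GTI version of Proposition~\ref{lem:Four-coeff-wf} applies, so that the mean of $w_{f;g}$ equals its constant Fourier coefficient:
\[
\mean{w_{f;g}} = \innerprod[AP]{w_{f;g}}{\mathbf{1}_G} = \sum_{j\in J} d_{j,0} = \int_{\ghat}\abssmall{\hat{f}(\omega)}^2\, t_0(\omega)\,d\omega.
\]

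Next I would invoke the continuous frame inequality. Since $w_{f;g}(x)$ is precisely the GTI frame sum-integral evaluated at $T_x f$, and translations preserve the norm, the lower frame bound yields $w_{f;g}(x)\ge A_g\norm{f}^2$ for every $x\in G$, exactly as in the GSI case. As $w_{f;g}-A_g\norm{f}^2\ge 0$, monotonicity of the mean forces $\mean{w_{f;g}}\ge A_g\norm{f}^2$, and combining with the display above gives $A_g\norm{f}^2 \le \int_{\ghat}\abssmall{\hat{f}(\omega)}^2\, t_0(\omega)\,d\omega$. A localization argument—testing with $\hat{f}=\mathbf{1}_F$ on any positive-measure set $F$ on which $t_0<A_g$ were to hold—then contradicts this inequality, which forces $A_g\le t_0(\omega)$ for a.e.\ $\omega$, the desired lower bound.

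The one genuine obstacle, hidden in the phrase ``with the straightforward modifications'', is verifying that the almost-periodic machinery of Section~\ref{sec:almost-peri-funct} survives the passage from a countable sum over $\gamma\in\Gamma_j$ to the mixed object that also integrates over the continuous space $P_j$. Concretely, one must check via Fubini--Tonelli and the standing integrability assumptions that $w_{f;g,j}$ remains an absolutely convergent trigonometric polynomial after the $P_j$-integration, and that the $1$-UCP still delivers $L^1(G_B)$-convergence of the Bohr extensions so that Proposition~\ref{lem:Four-coeff-wf} is applicable. Once these measure-theoretic points are secured, every subsequent step is formally identical to the proof of Theorem~\ref{thm:a-lic-calderon-bounds}; in particular no ordering of the lattices is used, which is exactly the advantage of the almost-periodic approach over the arguments in \cite{MR1199539,MR3627474}.
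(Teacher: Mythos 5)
Your proposal is correct and follows exactly the route the paper intends: the paper states Theorem~\ref{thm:calderon-GTI} without a separate proof, noting only that the almost-periodic argument of Theorem~\ref{thm:a-lic-calderon-bounds} ``extends'' to GTI systems, and your write-up is precisely that extension (upper bound from the GTI Calder\'on bound of \cite{JakobsenReproducing2014}, lower bound via the mean of $w_{f;g}$, Fourier-coefficient identification under $1$-UCP, and the $\hat{f}=\mathbf{1}_F$ localization). You also correctly isolate the only genuine content hidden in ``straightforward modifications,'' namely that the $P_j$-integration preserves the trigonometric-polynomial structure of each layer and the $L^1(G_B)$-convergence needed for Proposition~\ref{lem:Four-coeff-wf}.
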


Theorems~\ref{thm:meta_convergence_AP},
\ref{thm:meta_convergence_AP_tight} and \ref{thm:density-GTI} also extend to GTI frames as
Theorem~\ref{thm:calderon-GTI}, albeit Theorem~\ref{thm:density-GTI}
needs the additional assumption that $\int_{P_j}\norm{g_{j,p}}^2
d\mu_{P_j}(p) < C$ for all $j \in J$. We remark that there exist LCA
groups that have no lattices, while any LCA group has a co-compact subgroup. 
We leave the existence question of GTI frames for a family of
co-compact subgroups $(\Gamma_j)_{j \in J}$ for future research. 




 \section*{Acknowledgements}
 We thank Mads Jakobsen and Felix Voigtlaender for interesting
 discussions and help with Example 3. We also thank Jordy van
 Velthoven for reading the manuscript and pointing out some typos.


 \end{document}